\theoremstyle{break-italic}
\numberwithin{equation}{section} 
\title{Gorenstein Modules of Finite Length}
\author{Michael Kunte}
\address{Fachgebiet Mathematik, Universität des Saarlandes, 66041 Saarbrücken}
\email{kunte@math.uni-sb.de}
\begin{document}

\maketitle

\begin{abstract}
In Commutative Algebra structure results on minimal free resolutions of Gorenstein modules are of classical interest. We define Gorenstein modules of finite length over the weighted polynomial ring via symmetric matrices in divided powers. 
\\
We show that their graded minimal free resolution is selfdual in a strong sense. Applications include a proof of the dependence of the monoid of Betti tables of Cohen-Macaulay modules on the characteristic of the base field. Moreover we give a new proof of the failure of the generalization of Green's Conjecture to characteristic $2$ in the case of general curves of genus $2^n -1$.   
\end{abstract}

\section{Introduction}

The Theorem of Buchsbaum and Eisenbud \cite[Theorem 2.1]{BE77} states that every Gorenstein ideal of $\depth=3$ has a skew symmetric resolution. It is easy to build ideals with skew symmetric resolutions in a polynomial ring with three variables: By Macaulay's Theorem \cite{M16} every such ideal occurs as the annihilator of a homogeneous form in divided powers. Here the polynomial ring acts on the divided power algebra. 
\\
We consider finite length modules over weighted polynomial rings with arbitrary many variables. All these are realized as quotients of annihilators of matrices in divided powers. The question is: What are sufficient conditions for these matrices to gain a module with a symmetric respectively skew symmetric resolution? Additionally: Are these conditions necessary?       
\\
The main result of this paper is the answer to both of the two questions in the case of an odd number of variables. 

\medskip

Another motivation to study modules of finite length with a skew symmetric resolution over the usual polynomial ring $R=k[x_1,\ldots,x_n]$ is the following: In their recent paper Eisenbud and Schreyer \cite{ES08} prove a strengthened form of the Boji-S\"oderberg conjectures \cite{BS06}. In this context the monoid of Betti tables of minimal free resolutions of Cohen-Macaulay $R$-modules is studied. The group structure is given by pointwise addition as the Betti table of the direct sum of two modules is the sum of the Betti tables. Eisenbud and Schreyer conjecture that the monoid depends on the characteristic of the ground field $k$. We can apply our theory and symmetric resolution constructions to prove this conjecture by focusing on a certain example.

\subsection[The Main Result]{The Main Result}
The main result is developed in Sections \ref{sec3} and \ref{sec5}. The major aspects can be summarized briefly as follows: 
\\
Let $R=k[x_1,\ldots,x_n]$ be the weighted polynomial ring over an arbitrary field $k$ with $\deg x_l = d_l > 0$. Let $M$ be a graded $R$-module of finite length. For us $M$ is said to be Gorenstein if for some $s \in \mathbb{Z}$ there is a graded isomorphism $\tau : M \rightarrow \Hom_k(M,k)(-s)$ such that $\tau = \Hom_k(\tau,k)(-s)$. A slightly more general definition for Gorenstein modules of finite length is presented in \ref{gor-def}. Another way to put it, which is especially useful from the computational point of view, is the following:

\begin{theorem}
\label{theo1}
$M$ is Gorenstein if and only if there is a symmetric matrix $P$ in divided powers, such that we have
\[
M \cong M(P).
\]
\end{theorem}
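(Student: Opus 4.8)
The plan is to reduce the whole statement to Macaulay duality, i.e.\ the perfect graded pairing $R_j\times D_j\to k$ extending the contraction action $R\times D\to D$, together with the single observation that ``symmetric matrix in divided powers'' is the same thing as ``self-dual contraction map''. Concretely, for a matrix $P$ of homogeneous divided-power entries of size $a\times b$ I would write $\phi_P\colon R^{b}\to D^{a}$ for entrywise contraction by $P$ (an $R$-linear map with finite-length cokernel), so that $M(P)=\operatorname{im}\phi_P\cong R^{b}/\ker\phi_P$ is also the $R$-submodule of $D^{a}$ generated by the columns of $P$. Under the identifications $\Hom_k(R^{m},k)\cong D^{m}$ and $\Hom_k(D^{m},k)\cong R^{m}$ furnished by the pairing (each up to a degree shift controlled by the weights $d_l$ and the internal degrees of the entries of $P$), the $k$-dual of $\phi_P$ is, up to that shift, $\phi_{P^{\mathrm t}}\colon R^{a}\to D^{b}$. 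Hence $P$ is symmetric if and only if $a=b$ and $\phi_P$ is self-dual.

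For the implication ``$P$ symmetric $\Rightarrow M(P)$ Gorenstein'', I would apply $\Hom_k(-,k)$ to the exact sequence $0\to\ker\phi_P\to R^{a}\xrightarrow{\phi_P}M(P)\to 0$ to get $\Hom_k(M(P),k)\cong(\ker\phi_P)^{\perp}\subseteq D^{a}$, and then use the elementary fact $(\ker f)^{\perp}=\operatorname{im}(f^{*})$ together with $\phi_P^{*}=\phi_{P^{\mathrm t}}=\phi_P$ to conclude $\Hom_k(M(P),k)(-s)\cong\operatorname{im}\phi_P=M(P)$, where $s$ is the shift above. The isomorphism $\tau$ is the one induced by $\phi_P$ itself, $R^{a}/\ker\phi_P\xrightarrow{\ \sim\ }\operatorname{im}\phi_P$; it is $R$-linear by construction and satisfies $\tau=\Hom_k(\tau,k)(-s)$ precisely because $\phi_P^{*}=\phi_P$.

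For the converse I would start from a homogeneous minimal generating set $m_1,\dots,m_a$ of $M$. Since $M$ has finite length, $\Hom_k(M,k)$ is a finitely generated $R$-module, and via the given self-dual isomorphism $\tau\colon M\to\Hom_k(M,k)(-s)$ the elements $\lambda_i:=\tau(m_i)$ generate $\Hom_k(M,k)$. Dualizing the surjection $R^{a}\twoheadrightarrow\Hom_k(M,k)$, $e_i\mapsto\lambda_i$, gives an $R$-linear embedding $\iota\colon M\cong\Hom_k(\Hom_k(M,k),k)\hookrightarrow D^{a}$ (componentwise contraction on $D^{a}$) which identifies $M$ with $M(P)$, where $P$ is the $a\times a$ matrix whose $i$-th column is $\iota(m_i)$, i.e.\ $P_{ji}=\bigl(r\mapsto\lambda_j(r\,m_i)\bigr)\in D$. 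Writing $B(x,y):=\tau(x)(y)$, the $R$-linearity of $\tau$ gives $B(rx,y)=B(x,ry)$, and the hypothesis $\tau=\Hom_k(\tau,k)(-s)$ (via the canonical double-dual identification) says exactly $B(x,y)=B(y,x)$; therefore $P_{ij}=\bigl(r\mapsto B(m_i,r m_j)\bigr)=\bigl(r\mapsto B(r m_i,m_j)\bigr)=\bigl(r\mapsto B(m_j,r m_i)\bigr)=P_{ji}$, so $P$ is symmetric and $M\cong M(P)$.

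The step I expect to be the real obstacle is not any of this algebra, which is formal Macaulay duality, but the bookkeeping of the grading: one must fix the shift $s$ and the internal degrees of the entries of $P$ so that ``symmetric'' literally means $P=P^{\mathrm t}$ for a square matrix with matching bidegrees, check that the duality identifications $\Hom_k(R^{m},k)\cong D^{m}$ are compatible with both the $R$-action and these shifts, verify that the abstract column-span description of $M(P)$ agrees with the definition used in the paper, and confirm that the slightly more general notion of Gorenstein module of finite length in \ref{gor-def} is covered by transporting the argument along that definition.
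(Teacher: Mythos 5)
Your proposal is correct and follows essentially the same route as the paper's proof of Theorem \ref{Gorensteintheorem}: from $\tau$ you build the same matrix $P$ (the composite of the generator surjection, $\tau$, and the dualized surjection, i.e.\ $P_{ji}=(r\mapsto\tau(m_j)(rm_i))$), with symmetry checked by the same bilinear-form/dual-basis computation, and conversely the symmetric $P$ induces exactly the paper's $\tau'$ on $M(P)\cong\operatorname{im}P\cong\Hom_k(M(P),k)(-s)$, the symmetry of $P$ guaranteeing that the image lies in the dual and that $\tau'$ is self-dual. The remaining work you flag (grading shifts, compatibility of the dualities, agreement of the column-span description with $\Ann_R(P)$) is indeed routine bookkeeping already implicit in the paper's diagrams and Theorem \ref{P^t}.
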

Here $M(P) =  \bigoplus_{j=1}^p R(b_j) / \Ann_R(P)$ is the quotient of the annihilator of $P$. The exact definition of $\Ann_R(P)$ is given in \ref{defann}. The proof of Theorem \ref{theo1} follows from Theorem \ref{Gorensteintheorem}. 

\medskip

For Gorenstein modules the following theorem holds over any characteristic:

\begin{theorem}[Selfdual Resolution] \label{theo2.0}
Let $n \geq 3$ be an odd integer, and let $m = \frac{n-1}{2}$. Assume that $M$ is Gorenstein with $\tau = \Hom_k(\tau,k)(-s)$, and let $(\quad)^{\vee} = \Hom_R(\quad, R(-\sum_{l=1}^n d_l -s))$. Then there is a graded free resolution of $M$ of the form 
\[
0 \leftarrow M \leftarrow F_0 \stackrel{\phi_1}{\leftarrow} F_1 \leftarrow \ldots \leftarrow F_{m}\stackrel{\phi_{m+1}}{\leftarrow}  (F_{m})^{\vee} \leftarrow \ldots \leftarrow (F_1)^{\vee} \stackrel{\phi_1^{\vee}}{\leftarrow} (F_0)^{\vee} \leftarrow 0,
\]
such that $\phi_{m+1}$ is skew if $m$ is odd and symmetric if $m$ is even.  
\end{theorem}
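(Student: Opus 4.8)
The plan is to exploit the defining self-duality $\tau = \Hom_k(\tau,k)(-s)$ of the Gorenstein module $M$ together with the explicit realization $M \cong M(P)$ from Theorem \ref{theo1}, so that the symmetry of the matrix $P$ feeds directly into a symmetry of the resolution. First I would take any graded minimal free resolution $F_\bullet \to M$. Since $M$ has finite length, $\operatorname{pdim} M = n$, so $F_\bullet$ has length $n = 2m+1$. Dualizing into $R$ and using local duality (or the fact that $M$ is a maximal Cohen--Macaulay module over the Artinian ring it is supported on, hence $\Ext^i_R(M,R) = 0$ for $i \neq n$ and $\Ext^n_R(M,R) \cong \Hom_k(M,k)$ up to a twist by the canonical module $R(-\sum d_l)$), one gets that the dual complex $(F_\bullet)^\vee$, suitably reindexed and shifted by $R(-\sum_{l} d_l - s)$, is again a minimal free resolution of $\Hom_k(M,k)(-s) \cong M$. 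By uniqueness of minimal free resolutions there is an isomorphism of complexes $\Psi\colon F_\bullet \xrightarrow{\ \sim\ } (F_{n-\bullet})^\vee$ lifting $\tau$.

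The next step is to show $\Psi$ can be chosen to be \emph{symmetric}, i.e. $\Psi_{n-i}^\vee = \pm \Psi_i$ with a single global sign. This is where the hypothesis $\tau = \Hom_k(\tau,k)(-s)$ and the oddness of $n$ enter. The composite $\Psi^{\vee}[n] \circ \Psi$ is an automorphism of the minimal resolution $F_\bullet$ lifting $\tau^{-1}\Hom_k(\tau,k)(-s) = \operatorname{id}$; hence it is a unipotent (in fact, homotopic to a scalar) automorphism, and a standard averaging / square-root argument — replacing $\Psi$ by $\tfrac12(\Psi \pm \Psi^{\vee}[n])$ when $\operatorname{char} k \neq 2$, and by a more careful inductive correction up the resolution in characteristic $2$ — lets one rectify $\Psi$ to be genuinely $(\pm)$-symmetric. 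One then reads off the sign from the middle: setting $F_i^{\vee}[n] = G_{n-i}$ and writing the resolution in the displayed form with $G_m = (F_m)^\vee$ identified with $F_{m+1}$ via $\Psi_m$, the middle map $\phi_{m+1}\colon (F_m)^\vee \to F_m$ becomes $\Psi_m^{-1}\circ \phi_{m+1}^{F}$, and the symmetry relation forces $\phi_{m+1}^\vee = (-1)^{?}\phi_{m+1}$; tracking the Koszul-type signs that accumulate through $m$ dualization steps shows the sign is $(-1)^m$, i.e. skew for $m$ odd and symmetric for $m$ even.

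Concretely it is cleaner to work from the realization $M = M(P)$ and build the resolution by hand rather than rectifying an abstract one: one splices the "top half" resolving the quotient $\bigoplus R(b_j)/\Ann_R(P)$ with its $R$-dual, using that $\Ann_R(P)$ is defined via contraction against the symmetric matrix $P$ in divided powers, so that the transpose operation on matrices of relations is literally implemented by the symmetry $P^\top = P$. The first $m+1$ modules $F_0,\dots,F_m$ and differentials $\phi_1,\dots,\phi_m$ come from the first half of a minimal resolution; the self-duality of $P$ gives an identification of the truncated dual complex with the second half, and the "gluing map" in the middle is exactly $\phi_{m+1}$, whose (skew-)symmetry is inherited from $P$ with the parity twist coming from the $m$ transpositions/shifts. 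I expect the main obstacle to be characteristic $2$: there the naive symmetrization $\tfrac12(\Psi + \Psi^\vee)$ is unavailable, and one must instead argue that the obstruction to choosing a symmetric comparison map lives in a cohomology group that vanishes for length-$n$ resolutions with $n$ odd — essentially an Euler-characteristic / rank-parity argument on the middle free module — or else absorb the discrepancy into a change of basis that is symmetric up to an alternating correction, which is precisely the phenomenon that produces "skew" rather than "symmetric" in the $m$ odd case. Handling this sign bookkeeping carefully, uniformly in the characteristic, is the technical heart of the proof.
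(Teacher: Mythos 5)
There is a genuine gap: the theorem is asserted (and used later, e.g.\ in Corollary \ref{3cor}) in \emph{every} characteristic, including $\car k = 2$, and your central mechanism does not survive there. Your main route rectifies a comparison isomorphism $\Psi$ between a minimal resolution and its dual by averaging, $\tfrac12(\Psi \pm \Psi^{\vee})$, which is unavailable in characteristic $2$; the substitutes you offer (``a more careful inductive correction,'' an ``obstruction living in a cohomology group that vanishes,'' a ``rank-parity argument'') are not arguments but placeholders, and it is precisely this symmetrized-minimal-resolution statement that the paper only proves under $\car k \neq 2$ (Corollary \ref{minres}). The statement of Theorem \ref{theo2.0} deliberately does \emph{not} ask for minimality, and the paper exploits this: it takes the explicit, generally non-minimal Nielsen resolution $A_i(M) = R \otimes_k \bigwedge^i W \otimes_k M$ with differentials $\phi_i(r\otimes w\otimes \mm) = \sum_l x_l r \otimes (x_l\neg w)\otimes \mm - \sum_l r\otimes(x_l\neg w)\otimes x_l\mm$, and defines duality maps $\beta_i : A_i(M)\to (A_{n-i}(M))^{\vee}$ by $(r'\otimes w'\otimes \mm') \mapsto (r'r)\otimes(w'\wedge w)\otimes \tau(\mm)(\mm')$. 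Then $\beta_{m+1}^{\vee} = \pm\beta_m$ follows from $w\wedge w' = w'\wedge w$ on $\bigwedge^m\wedge\bigwedge^{m+1}$ together with the Gorenstein symmetry of $\tau$, and the square comparing $\phi_{m+1}$ with $(-1)^m\phi_{m+1}^{\vee}$ commutes because $\tau(\mm)(x_l\mm'')=\tau(x_l\mm)(\mm'')$ and $w''\wedge(x_l\neg w) = (-1)^m (x_l\neg w'')\wedge w$. This yields the sign $(-1)^m$ by direct computation, with no division by $2$ and no minimality, in any characteristic.

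Your second sketch (splice a half-resolution of $M(P)$ with its dual, ``gluing'' in the middle via the symmetry of $P$) is closer in spirit to a characteristic-free construction, but as written it is only an intention: the dual of the first half resolves $\Ext^n_R(M,R(-d))\cong \Hom_k(M,k)$, and the existence and (skew-)symmetry of the middle gluing map is exactly the content that has to be proved, not inherited automatically from $P^t = P$; likewise your claim that ``tracking Koszul-type signs shows the sign is $(-1)^m$'' is asserted rather than established. So even granting $\car k\neq 2$, the proposal still owes the sign verification; and in characteristic $2$ it owes the whole theorem. The fix is to abandon the minimal resolution as the ambient object and carry out the explicit computation on a functorial resolution, as the paper does.
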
 

The proof is given in Theorem \ref{maintheorem}. Theorem \ref{theo2.0} is of special interest in the context of the applications. 

\medskip

In Corollary \ref{minres} we give a symmetric minimization process such that the symmetry of the resolution is kept. To do so we need $\car k \neq 2$. 
\\
In Section \ref{sec5} an equivalence is given: A symmetric resolution implies already the Gorenstein property. These two facts might be summarized as follows: 

\begin{theorem}[Selfdual Minimal Resolution] \label{theo2}
Let $\car k \neq 2$. Let $s$ be the top degree of M and $(\quad)^{\vee}= \Hom_R(\quad,R(-\sum_{l=1}^n d_l -s))$. 
Let $n \ge 3$ be an odd integer, and set $m := \frac{n-1}{2}$. Then $M$ is Gorenstein if and only if its minimal graded free resolution is of the form 
\[
0 \leftarrow M \leftarrow F_0 \stackrel{\psi_1}{\leftarrow} F_1 \leftarrow \ldots \leftarrow F_{m}\stackrel{\psi_{m+1}}{\leftarrow}  (F_{m})^{\vee} \leftarrow \ldots \leftarrow (F_1)^{\vee} \stackrel{\psi_1^{\vee}}{\leftarrow} (F_0)^{\vee} \leftarrow 0,
\] 
and satisfies the following condition: $\psi_{m+1}$ is skew if $m$ is odd and symmetric if $m$ is even. 
\end{theorem}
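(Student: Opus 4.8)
The plan is to prove Theorem \ref{theo2} by combining the three earlier results — Theorem \ref{theo2.0} (existence of a selfdual, not necessarily minimal, resolution), Corollary \ref{minres} (a characteristic $\neq 2$ symmetric minimization keeping selfduality), and the Section \ref{sec5} equivalence (a symmetric resolution forces the Gorenstein property) — into one ``if and only if''. So the proof naturally splits into the two implications.

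For the forward direction, assume $M$ is Gorenstein with $\tau = \Hom_k(\tau,k)(-s)$, where $s$ is the top degree of $M$; I would first check that this $s$ is indeed the correct shift, i.e. that the Gorenstein isomorphism is forced to have $s$ equal to the top degree (this is a socle-degree computation: $\Hom_k(M,k)(-s)$ has its generators in degree $s$ minus the bottom degree of $M$, and matching socles pins down $s$). Then Theorem \ref{theo2.0} hands us a selfdual free resolution with $\phi_{m+1}$ skew when $m$ is odd and symmetric when $m$ is even, and with the duality $(\quad)^\vee = \Hom_R(\quad, R(-\sum d_l - s))$. Since $\car k \neq 2$, I invoke Corollary \ref{minres} to minimize this resolution while preserving both the selfdual shape $F_0 \leftarrow \cdots \leftarrow F_m \leftarrow (F_m)^\vee \leftarrow \cdots \leftarrow (F_0)^\vee$ and the (skew-)symmetry of the middle map; the output is exactly the minimal resolution, so uniqueness of minimal free resolutions shows the minimal resolution of $M$ has the asserted form with $\psi_{m+1}$ of the stated parity.

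For the converse, suppose the minimal graded free resolution of $M$ has the displayed selfdual form with $\psi_{m+1}$ skew (resp.\ symmetric) for $m$ odd (resp.\ even). Here I would appeal to the Section \ref{sec5} result: dualizing the whole complex via $(\quad)^\vee$ and using the selfduality, one recovers a resolution of $\Ext^n_R(M, R(-\sum d_l - s)) = \Hom_k(M,k)(-s)$ (the last equality is local duality for a finite-length module over the weighted polynomial ring) that is isomorphic, as a complex, to the original. By uniqueness of minimal free resolutions this isomorphism of complexes descends to a graded isomorphism $\tau : M \to \Hom_k(M,k)(-s)$; the symmetry condition on $\psi_{m+1}$ is precisely what guarantees that one may choose this $\tau$ so that $\tau = \Hom_k(\tau,k)(-s)$ rather than merely $\tau = \pm \Hom_k(\tau,k)(-s)$ — this is where $\car k \neq 2$ is used again, to rescale $\tau$ by a square root of a sign/unit if necessary. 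That gives the Gorenstein property in the sense of the definition, completing the equivalence.

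The main obstacle I anticipate is the bookkeeping of shifts and signs in the converse: making sure that the selfduality of the complex, the parity of the middle map, and the grading shift $-\sum d_l - s$ all fit together so that the induced isomorphism on $M$ is genuinely \emph{symmetric} ($\tau = \Hom_k(\tau,k)(-s)$) and not just symmetric up to a unit or a sign that cannot be absorbed. Concretely, the comparison-map argument between a complex and its dual produces an isomorphism well-defined only up to homotopy, and one must track how the (skew-)symmetry of $\psi_{m+1}$ propagates through the comparison to pin down the sign on $\tau$; the parity switch between $m$ odd and $m$ even is exactly the shadow of this sign, and verifying it in both cases — rather than merely asserting it by analogy with the odd $\depth=3$ case — is the technical heart. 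The finite-length hypothesis keeps everything over an Artinian quotient so there are no higher $\Ext$ obstructions, which is what makes the rest of the argument go through cleanly.
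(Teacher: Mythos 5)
Your forward implication coincides with the paper's: Theorem \ref{maintheorem} (= Theorem \ref{theo2.0}) produces the selfdual resolution whose middle map satisfies $\phi_{m+1}^{\vee}=(-1)^m\phi_{m+1}$ when $\tau=\tau^*(-s)$, and Corollary \ref{minres} --- the only place where $\car k\neq 2$ enters --- minimizes it while preserving the symmetry. That half of your plan is exactly the paper's route and is fine (modulo the small point that the socle computation pins $s$ down as the top degree of $M$ only after normalizing the bottom degree of $M$ to be $0$; in general $s$ is top plus bottom degree).

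The converse is where your sketch has a genuine gap, and the one concrete repair you propose does not work. The paper does not get $\tau$ from a comparison-of-complexes argument followed by a rescaling; it proves Theorem \ref{backwardstheorem}, whose entire content is the sign verification you defer as ``the technical heart''. Concretely, dualizing the minimal resolution gives an isomorphism $\tau':M\to \Ext^n_R(M,R(-d))(-s)$, but to turn this into $\tau:M\to\Hom_k(M,k)(-s)$ and then \emph{compute} $\Hom_k(\tau,k)$ in terms of $\tau$ one needs an identification of the functors $\Hom_k(\quad,k)$ and $\Ext^n_R(\quad,R(-d))$ on finite-length modules that is natural and compatible with transposition; this is precisely Theorem \ref{functortheorem}, proved via the Nielsen resolutions and the selfduality of the Koszul complex, and it is there that the factor $(-1)^m$ arises. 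With it, the hypothesis $\psi_{m+1}^{\vee}=(-1)^m\psi_{m+1}$ feeds into the conclusion $\tau^*(-s)=\pm(-1)^m\tau$ of Theorem \ref{backwardstheorem} and the two signs cancel, so $\tau=\tau^*(-s)$ holds on the nose --- no normalization is needed. Your suggestion to ``rescale $\tau$ by a square root of a sign/unit'' using $\car k\neq 2$ cannot achieve this in any case: $(c\tau)^*(-s)=c\,\tau^*(-s)$ for every scalar $c$, so the sign in $\tau^*(-s)=\pm\tau$ is invariant under rescaling (it is the symmetric-versus-alternating type of the induced pairing), and indeed the paper's converse holds in every characteristic, with $\car k\neq 2$ playing no role there. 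So the missing ingredient is the functorial sign bookkeeping of Theorem \ref{functortheorem}; without it your argument establishes only a weak Gorenstein isomorphism, not the symmetry $\tau=\Hom_k(\tau,k)(-s)$ required by the statement.
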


In Theorem \ref{backwardstheorem} the Gorensteiness of $M$ is shown to be true. The proof is derived from the more general Theorem \ref{functortheorem}, which is formulated in the language of isomorphisms of functors. 

\subsection{The Structure of this Paper}
In Section \ref{sec1} we show how to define graded modules of finite length via matrices in divided powers. The following section connects symmetric divided power matrices and Gorenstein modules. In the fourth section we prove the strong selfduality of the resolution in the Gorenstein case using resolution constructions due to Nielsen. Section \ref{sec5} is devoted to the applications for the monoid of Betti tables and Green's Conjecture. Finally the last section provides a proof of the fact that a module of finite length with a symmetric resolution is Gorenstein. 

\subsection{Acknowledgments}
I thank especially Frank-Olaf Schreyer for his suggestions and efforts throughout this project. 

\section{Notation and the Module $M(P)$}
\label{sec1}
Let $k$ be a field of arbitrary characteristic. Let $R = k[x_{1},\ldots,x_{n}] = \bigoplus_{j \geq 0} R_{j}$ be the weighted polynomial ring, with weights $\deg x_i = d_i > 0$. Let $d = \sum_{\ell =1}^n d_{\ell}$.  
Let $\kd$ be the graded dual of $R$, i.e.
\[ \kd = \grHom_k(R,k) = \bigoplus_{j \geq 0} \Hom_{k}(R_j,k) = \bigoplus_{j\geq 0} \kd_{-j}\stackrel{pr}{\longrightarrow} \kd_0 = k.\]
We denote by $x^{u}=x_1^{u_1}\cdots x_n^{u_n}, \|u\|=u_1+\cdots+u_n =j$, the standard monomial basis of $R_j$. By $X_1 = X_1^{(1)},\ldots,X_n =X_n^{(1)}$ we mean the basis dual to $x_1,\ldots,x_n$ (here $V:=\langle x_1,\ldots,x_n \rangle$ can be viewed as sub $k$-vectorspace of $R$), by $X^{(u)}=X_1^{(u_1)}\cdots X_n^{(u_n)}$ the basis elements of $\kd_{-j}$ dual to the basis $\{x^u : \|u\|=j\}$. The $X^{(u)}$ are called {\it divided power monomials}, their linear combinations {\it divided power polynomials} or simply {\it divided powers}.

\begin{definition} \label{contraction}
For all $(i,j) \in \mathbb{Z}^2$ we define natural contraction maps $ R_i \times \kd_{-j} \longrightarrow \kd_{-j+i} $ as follows: Let $\phi \in R_i$ and $f \in \kd_{-j}$. 
\[
(\phi, f) := \phi \cdot f := \left \{ \begin{array}{ll}  0 & \mbox{ for } j < i \\ R_{j-i} \rightarrow k: \psi \mapsto f(\phi \psi) & else. \end{array}  \right.  
\]

\end{definition}
\begin{remark}
$\kd$ has a graded $R$-module structure: $R$ acting on $\kd$ via the above contraction action. The induced map $R_i \times \kd_i \rightarrow \kd_0 \cong k$ is a perfect pairing. All elements of $\kd$ have negative degree, e.g. $X_1^{(2)}X_2^{(1)}$ has degree $-3$. 
\\
In terms of basis one has \[x_1^{u_1} \cdots x_n^{u_n} \cdot X_1^{(j_1)}\cdots X_n^{(j_n)} = X_1^{(j_1 - u_1)}\cdots X_n^{(j_n-u_n)}.\]
\end{remark}

\begin{definition} \index{Contraction action}
Let $b_j$, $j = 1,\ldots,p$, be integers and let $\kf = \bigoplus_{j=1}^{p} \kd(b_j)$ be the direct sum of $p$ shifted $R$-modules $\kd$. Let $G = \bigoplus_{j=1}^{p} R(-b_j)$. Let $\phi \in G$ and $f \in \kf$ then
\[
  \langle f, \phi \rangle:= \sum_{j=1}^p \phi_j \cdot f_j.
\]
Recall that $\phi_j \cdot f_j$ means the contraction action defined in \ref{contraction}. 
\end{definition}
 
 Our first aim is to define quotients of annihilators of homogeneous matrices over $\kd$:
\\
Let the $a_i$ and $b_j$ be non-negative integers and let 
\[P \in \Hom_{R}\left(\bigoplus_{j=1}^{p}R(b_j),\bigoplus_{i=1}^{q}\kd(a_i)\right)\]
 a homogeneous homomorphism, represented by a homogeneous matrix in divided powers with entries such that $\deg P_{i,j} = -b_j + a_i$. 

\begin{definition}
\label{defann} 
We define the \textit{annihilator of $P$ in $R$} to be
\[
 \Ann_R(P) := \left\{ \phi \in \bigoplus_{j=1}^{p} R(b_j) \; \bigg | \; \langle P_i, \phi \rangle = 0 \mbox{ for all } i \right \}
\]
where the $P_i$ denote the rows of $P$ considered as $P_i \in \bigoplus_{j=1}^{p}\kd(a_i-b_j)$.

\end{definition}

\begin{remark}
Clearly the annihilator is an $R$-module as $\langle P_i, r \phi \rangle = r  \langle P_i, \phi \rangle$ for all $r \in R$. Moreover $\Ann_R(P)$ is up to isomorphism independent from the matrix representation of $P$.
\end{remark}

Any graded $R$-module $M$ of finite length can be defined by a matrix in divided powers. Assume that $M$ has $p$ generators of degrees $b_j$:
\[
\bigoplus_{j=1}^p R(b_j) \stackrel{\alpha}{\rightarrow} M \rightarrow 0.
\]
As $M$ is of finite length, $\Hom_k(M,k)$ is of finite length, too. Let $\Hom_k(M,k)$ have $q$ generators of degrees $a_i$:  
\[
\bigoplus^q_{i=1}R(-a_i) \stackrel{\beta}{\longrightarrow} \Hom_k(M,k) \rightarrow 0. 
\]
We apply $(\quad)^*=\grHom_k(\quad,k)$ to the last row and obtain a diagram
\medskip
\small

\xymatrix{&&&&& 0 & 0 \ar[d]& \\ 
          &&&&& M \ar[u] \ar[r]^{\id} & M \ar[d]_{\beta^{*}}&(1) \\
	 &&&&& \oplus_{j=1}^p R(b_j) \ar[u]^{\alpha} \ar[r]^{P} & \oplus_{i=1}^q \kd(a_i).& }
\normalsize
The matrix $P$ is defined via $ \beta^* \circ \alpha$. Hence the kernel of $P$ --- considered as a map of graded $R$-modules --- is $\Ann_R(P)$. 

\medskip

That motivates a new definition for the construction of modules of finite length. It is in the spirit of the Theorem of Macaulay (see \cite[Lemma 2.12]{IK99} or \cite{M16}).  
Our central objects are quotients of annihilators $\Ann_R(P)$:

\begin{definition}
\label{MP}
Let the quotient of $P$ in $R$ be
 \[
   M(P) := \bigoplus_{j=1}^{p} R (b_j) \bigg / \Ann_R(P).
 \]
\end{definition}

\medskip

From the above diagram $(1)$ one derives immediately:
\begin{theorem}
\label{pexists} \index{Divided powers! associated matrix in} 
Let $M$ be a graded $R$-module of finite length. Then there are integers $(a_1,\ldots,a_q)$, $(b_1,\ldots,b_p)$ and a $P \in \Hom_R(\bigoplus_{j=1}^{p} R(b_j),  \bigoplus_{i=1}^{q} \kd(a_i))$       such that 
\[
M \cong M(P)
\]
as graded $R$-modules.
\end{theorem}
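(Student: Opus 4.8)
The plan is to read off the statement directly from the diagram $(1)$ that has already been constructed in the text preceding the theorem. The only genuine content is to verify that the constructions there are legitimate, i.e. that every object appearing can indeed be chosen with the stated finiteness, and that the resulting $M(P)$ is isomorphic to $M$ as a graded module. First I would fix a minimal set of homogeneous generators of $M$; since $M$ has finite length it is in particular a finitely generated graded $R$-module, so there are finitely many such generators, say $p$ of them, of degrees $-b_1,\dots,-b_p$, giving the surjection $\alpha:\bigoplus_{j=1}^p R(b_j)\twoheadrightarrow M$. Next, because $M$ has finite length, so does its $k$-dual $\Hom_k(M,k)=M^{*}$, which is again a finitely generated graded $R$-module under the contraction-type action; choose a finite homogeneous generating set to obtain $\beta:\bigoplus_{i=1}^q R(-a_i)\twoheadrightarrow M^{*}$.

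The second step is to dualize. Applying the exact contravariant functor $(\ )^{*}=\grHom_k(\ ,k)$ to the surjection $\beta$ turns it into an injection $\beta^{*}:M\cong M^{**}\hookrightarrow \bigoplus_{i=1}^q \kd(a_i)$, using here the natural identification $M^{**}\cong M$ (valid because $M$ has finite length, so each graded piece is finite-dimensional) and the identification $\grHom_k(R(-a_i),k)=\kd(a_i)$. Composing, I set $P:=\beta^{*}\circ\alpha : \bigoplus_{j=1}^p R(b_j)\to \bigoplus_{i=1}^q \kd(a_i)$; this is a graded $R$-module homomorphism of degree $0$, hence is represented by a matrix in divided powers with $\deg P_{i,j}=a_i-b_j$ as required by Definition \ref{defann}.

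The third step is to identify $\ker P$ with $\Ann_R(P)$ and conclude. Since $\beta^{*}$ is injective, $\ker P=\ker(\beta^{*}\circ\alpha)=\ker\alpha$; but by construction of the pairing $\langle\ ,\ \rangle$, the condition $P(\phi)=0$ is exactly the condition $\langle P_i,\phi\rangle=0$ for all rows $P_i$, which is the defining condition of $\Ann_R(P)$. Hence $\Ann_R(P)=\ker\alpha$, and passing to quotients the surjection $\alpha$ induces a graded isomorphism
\[
M(P)=\bigoplus_{j=1}^p R(b_j)\big/\Ann_R(P)=\bigoplus_{j=1}^p R(b_j)\big/\ker\alpha\;\xrightarrow{\ \sim\ }\;M.
\]

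The only point that needs a little care — and hence the main (mild) obstacle — is the bookkeeping identifying the matrix-of-$P$ description with the row-pairing description of $\Ann_R(P)$, together with the grading-compatible identification $\grHom_k(R(-a_i),k)=\kd(a_i)$ and the biduality $M\cong M^{**}$. All of these are routine once one unwinds the definitions of the contraction action in \ref{contraction} and of $\langle\ ,\ \rangle$, so no serious difficulty is expected; the theorem is essentially a formal consequence of the diagram $(1)$.
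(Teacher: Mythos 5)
Your proposal is correct and follows essentially the same route as the paper, which derives the theorem directly from diagram $(1)$: present $M$ and $\Hom_k(M,k)$ by finite free modules, dualize $\beta$ to the injection $\beta^{*}$ (using $M\cong M^{**}$ for finite length), set $P=\beta^{*}\circ\alpha$, and identify $\Ann_R(P)=\ker P=\ker\alpha$ to get $M(P)\cong M$. Your write-up merely makes explicit the routine verifications the paper leaves implicit, so nothing further is needed.
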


Let $P^t$ denote the transposed matrix of $P$, defining an element in $\Hom_{R}(\bigoplus_{i=1}^{q}R(-a_i),\bigoplus_{j=1}^{p}\kd(-b_j))$. Then the following holds.

\begin{theorem} \label{P^t} \index{Dual module! $k$-vectorspace of $M(P)$}
There is a natural isomorphism 
\[
 M(P^t) \cong \Hom_k(M(P),k)
\]
as graded $R$-modules. 
\end{theorem}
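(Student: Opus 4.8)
The plan is to exhibit the isomorphism concretely by tracking the module $M(P)$ through its defining presentation and dualizing. Recall from diagram $(1)$ that $M(P)$ is the cokernel of the inclusion $\Ann_R(P) \hookrightarrow G := \bigoplus_{j=1}^p R(b_j)$, equivalently the image of the map $\beta^* \circ \alpha$ up to the relevant identifications, where $P = \beta^* \circ \alpha \colon G \to \bigoplus_{i=1}^q \kd(a_i)$. So my first step is to record the clean statement: $M(P) \cong G/\Ann_R(P) \cong \mathrm{im}(P)$, since $\Ann_R(P) = \ker P$ (viewing $P$ as a map of graded $R$-modules into $\bigoplus \kd(a_i)$), and dually $M(P^t) \cong \bigoplus_{i=1}^q R(-a_i)/\ker(P^t) \cong \mathrm{im}(P^t)$.

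Next I would set up the pairing that makes the transpose meaningful. The contraction action of Definition \ref{contraction} gives, for each pair of degrees, a perfect pairing $R_i \times \kd_i \to k$; more globally, for $G = \bigoplus_j R(-b_j)$, $G' = \bigoplus_j R(b_j)$ (the twist used inside $\Ann_R$), and similarly for the $a_i$-graded pieces, the bracket $\langle\;,\;\rangle$ identifies $\kd(a_i)$ with the graded $k$-dual of (a suitable twist of) $R(-a_i)$. The key algebraic identity to verify is the adjunction
\[
\langle P\phi, \xi\rangle \;=\; \langle \phi, P^t \xi\rangle
\]
for $\phi$ in the source of $P$ and $\xi$ in the appropriate divided-power module paired against the target of $P$ — this is just the statement that contraction against $P_{i,j}$ and against $(P^t)_{j,i} = P_{i,j}$ agree, which follows from the definitions once the signs/twists are matched. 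Given this, $\mathrm{im}(P)$ and $\mathrm{im}(P^t)$ are mutually $k$-dual as graded vector spaces with compatible $R$-module structure: the pairing $\mathrm{im}(P) \times \mathrm{im}(P^t) \to k$ induced by $\langle\;,\;\rangle$ is perfect because its left kernel is $\mathrm{im}(P) \cap (\mathrm{im}(P^t))^\perp = \mathrm{im}(P) \cap \{0\}$ by the adjunction and the perfectness of the underlying contraction pairings, and symmetrically on the right. Then $R$-linearity of the resulting identification $M(P^t) \cong \mathrm{im}(P^t) \cong \Hom_k(\mathrm{im}(P), k) \cong \Hom_k(M(P), k)$ comes from the relation $\langle P(r\phi), \xi \rangle = \langle r \cdot P\phi, \xi\rangle$ together with the module structure on $\kd$ described in the Remark after Definition \ref{contraction}.

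The main obstacle I anticipate is purely bookkeeping: keeping the grading shifts and the two conventions ($R(-b_j)$ with contraction into $\kd(a_i)$ versus the $R(b_j)$-valued formulation of $\Ann_R(P)$ in Definition \ref{defann}) consistent, so that "graded dual" lands in exactly the stated degrees and $M(P^t)$ really is $\Hom_k(M(P),k)$ on the nose rather than up to an unspecified twist. Concretely I would fix once and for all that $\grHom_k(R(-b_j), k) \cong \kd(b_j)$ and chase every arrow in diagram $(1)$ through $(\;)^* = \grHom_k(\;,k)$, using that $(\;)^*$ is exact on finite-length modules (so it turns the presentation of $M(P)$ into a copresentation and vice versa) and that $P^{tt} = P$. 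Once the degrees are pinned down, naturality — the claim that the isomorphism is functorial in $P$ — is immediate from the functoriality of $(\;)^*$ and of $\mathrm{im}$, so I would state it but not belabor it.
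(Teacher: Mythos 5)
Your proposal is correct, and in substance it turns on the same key fact as the paper's proof: under the contraction duality the transpose $P^t$ represents the $k$-dual map $P^*=\Hom_k(P,k)$; your adjunction $\langle P\phi,\xi\rangle=\langle\phi,P^t\xi\rangle$ is precisely the paper's identity $(P^*)_{\mu,\nu}=(P^t)_{\mu,\nu}$. The packaging differs: the paper applies $\grHom_k(\;,k)$ to the four-term exact sequence $0\leftarrow\Coker P\leftarrow\bigoplus_i\kd(a_i)\stackrel{P}{\leftarrow}\bigoplus_j R(b_j)\leftarrow\Ann_R(P)\leftarrow 0$, identifies $\grHom_k(\Coker P,k)\cong\ker P^t=\Ann_R(P^t)$, and reads off $\Hom_k(M(P),k)$ as the cokernel $\bigoplus_i R(-a_i)/\Ann_R(P^t)=M(P^t)$; you instead identify $M(P)\cong\Image(P)$ and $M(P^t)\cong\Image(P^t)$ and exhibit an explicit pairing $\Image(P)\times\Image(P^t)\to k$, $(P\phi,P^t\xi)\mapsto\langle P\phi,\xi\rangle(0)$, whose balancing property $(r\cdot u,v)=(u,r\cdot v)$ makes the $R$-linearity of the resulting map $M(P^t)\to\Hom_k(M(P),k)$ transparent; your closing paragraph (chasing diagram (1) through $(\;)^*$ using exactness) is essentially the paper's argument verbatim, so you have both routes in hand. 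One step should be tightened: the phrase ``left kernel $=\Image(P)\cap(\Image(P^t))^{\perp}=\{0\}$'' does not quite typecheck, since $\Image(P^t)$ is not a subspace of the module paired against $\Image(P)$; argue instead that if $P\phi\neq 0$ then, by the degreewise perfectness of the contraction pairing between $\bigoplus_i R(-a_i)$ and $\bigoplus_i\kd(a_i)$, there is a $\xi$ with $\langle P\phi,\xi\rangle(0)\neq 0$, hence $(P\phi,P^t\xi)\neq 0$; this, the symmetric statement for the right kernel, and the finite length of $M(P)$ and $M(P^t)$ (so all graded pieces are finite dimensional) give perfectness degree by degree and hence the graded duality with the stated twists.
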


\begin{proof}
Consider the following exact sequence:
\[ 
  0 \;\longleftarrow \;\Coker P \;\stackrel{pr}{\longleftarrow}\;\bigoplus_{i=1}^{q}\kd(a_i) \; \stackrel{P}{\longleftarrow} \; \bigoplus_{j=1}^{p}R(b_j) \; \longleftarrow \; \Ann_R(P) \; \longleftarrow \; 0
\]
Note that $\bigoplus_{j=1}^{p} R(b_j) \bigg / \Ann_R(P) = M(P).$ Using $\grHom_k(\quad,k)$ we dualize the sequence to
\[
  0 \;\longrightarrow \;\grHom_k(\Coker P,k) \;\longrightarrow\;\bigoplus_{i=1}^{q}R(-a_i) \; \stackrel{P^{*} = \Hom_k(P,k)}{\longrightarrow} \; \bigoplus_{j=1}^{p}\kd(-b_j).
\]
As $(P^*)_{\mu,\nu} = (P^t)_{\mu,\nu}$ it follows that $\grHom_k(\Coker P,k) \cong \ker P^{*} = \ker P^t = \Ann_R P^t$. That is why dualizing the upper sequence we obtain
\[
  0 \;\longrightarrow \;\Ann_R(P^t) \;\longrightarrow\;\bigoplus_{i=1}^{q}R(-a_i) \; \longrightarrow \; \grHom_k(M(P),k) \; \longrightarrow \; 0.
\]
 Because of $\grHom_k(M(P),k) = \Hom_k(M(P),k)$ as $M(P)$ is of finite length we obtain 
 \[
       \Hom_k(M(P),k) \cong M(P^t).
  \]
\end{proof}

\begin{example}
Let $R = k[x_1,x_2]$ with all weights $1$, and let
 \[ P = \left (
 \begin{array}{cc}
        X_1^{(3)} & X_1^{(1)} X_2^{(1)} + X_2^{(2)}\\
 \end{array}
 \right )  \in \Hom_R(R(3) \oplus R(2) , \kd  ).
 \]
Then $\Ann_R(P) =$
 \[
 \left \langle
 \left (
 \begin{array}{c}
        x_2 \\
	0 \\
 \end{array}
 \right ),  
 \left (
 \begin{array}{c}
        x_1^2\\
	  x_1-x_2\\
 \end{array}
 \right ), 
\left (
 \begin{array}{c}
        0\\
	x_1^2 \\
 \end{array}
 \right ) 
  \right \rangle.  
\]

As a $k$-vector space basis of  $M(P)$ is given by:

\[
 \left (
 \left (
 \begin{array}{c}
        1\\
	0 \\
 \end{array}
 \right ),
 \left (
 \begin{array}{c}
        x_1\\
	0 \\
 \end{array}
 \right ),
 \left (
 \begin{array}{c}
        x_1^2\\
	0 \\
 \end{array}
 \right ),
 \left (
 \begin{array}{c}
        x_1^3\\
	0 \\
 \end{array}
 \right ),
 \left (
 \begin{array}{c}
        0\\
	1 \\
 \end{array}
 \right ),
 \left (
 \begin{array}{c}
        0\\
	x_1 \\
 \end{array}
 \right ) \right).  
\] 

We have the graded $R$-isomorphism $\phi: \Image(P) \rightarrow M(P)$ given by 
$$
\left (
 \begin{array}{c}
        X_1^{(3)}\\
 \end{array}
 \right )
 \mapsto 
\left (
 \begin{array}{c}
        1\\
	0 \\
 \end{array}
 \right ), \mbox{   and   }
\left (
 \begin{array}{c}
        X_1^{(1)}X_2^{(1)}+X_2^{(2)}\\
 \end{array}
 \right )
 \mapsto 
\left (
 \begin{array}{c}
        0\\
	1 \\
 \end{array}
 \right ).
$$
 For $\Ann_R(P^t) \subset R$ we obtain as generators:
$
\left \langle
\left (
 \begin{array}{c}
        x_1^4\\
 \end{array}
 \right ),
\left (
 \begin{array}{c}
      x_1x_2-x_2^2\\
 \end{array}
 \right ),
\left (
 \begin{array}{c}
        x_1^2x_2\\
 \end{array}
 \right ),
\left (
 \begin{array}{c}
        x_2^3\\
 \end{array}
 \right ) \right \rangle.
$
Hence $M(P^t)$ can be represented as $k$-vector space by the following basis:
\[
\left (
\left (
 \begin{array}{c}
        1 \\
 \end{array}
 \right ),
\left (
 \begin{array}{c}
        x_1\\
 \end{array}
 \right ),
\left (
 \begin{array}{c}
        x_1^2\\
 \end{array}
 \right ),
\left (
 \begin{array}{c}
        x_1^3\\
 \end{array}
 \right ),
\left (
 \begin{array}{c}
        x_1x_2\\
 \end{array}
 \right ),
\left (
 \begin{array}{c}
        x_2\\
 \end{array}
 \right )
\right )
\]

A graded $R$-module isomorphism $\Image(P) \rightarrow \Hom_k(M(P^t),k)$ can be defined by
\[
\left (
 \begin{array}{c}
        X_1^{(3)}\\
 \end{array}
 \right )
 \longmapsto \left \{ \begin{array}{c} M(P) \rightarrow k, \\ (\mm \mapsto \left \langle  
 \begin{array}{c}
        X_1^{(3)}\\
 \end{array}
  , \mm  \rangle (0) \right ),\end{array} \right. \mbox{   and   }
 \]
\[
\left (
 \begin{array}{c}
        X_1^{(1)}X_2^{(1)}+X_2^{(2)}\\
 \end{array}
 \right )
 \longmapsto \left \{ \begin{array}{c} M(P) \rightarrow k, \\ (\mm \mapsto \left \langle 
 \begin{array}{c}
        X_1^{(1)}X_2^{(1)}+X_2^{(2)}\\
 \end{array}, \mm  \rangle (0) \right ), \end{array} \right. 
\]
 where $f(0)$ denotes for any $f \in \kd$ the projection $ \kd \stackrel{ \pr}{\rightarrow} \kd_0 = k$. 
\end{example}

\section{Gorenstein Case}

We focus now on the Gorenstein case. The main theorem within this section basically states that a Gorenstein module can be described by a symmetric matrix. 
\\
At first let us define a reasonable Gorenstein notion. Let $M$ be always a graded $R$-module of finite length. 

\begin{theorem} \label{Gorensteintheorem} 
The following two properties are equivalent: 
\begin{itemize}
\item There is a graded isomorphism  $\tau : M \longrightarrow \Hom_k(M,k)(-s)$ for some integer $s$ such that: $ \tau^{*} := \Hom_k(\tau,k) : M(s) \longrightarrow  \Hom_k(M,k)$ has the property $ \tau^{*}(-s) = \tau.$
\item There exists an $P \in \Hom_R \left (\bigoplus_{i=1}^p R(-a_i), \bigoplus_{i=1}^p \kd(a_i-s)\right)$ for some integers $p, a_1,\ldots, a_p$  such that the transposed matrix $P^t$ has the property $ P^t(-s) = P $ and $M \cong M(P)$.

\end{itemize}
\end{theorem}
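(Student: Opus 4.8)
The plan is to prove the two implications separately, using the matrix construction from Section \ref{sec1} together with the duality Theorem \ref{P^t}.

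For the implication from the symmetric matrix to the Gorenstein property: suppose $P$ is given with $P^t(-s) = P$ and $M \cong M(P)$. By Theorem \ref{P^t} we have a natural isomorphism $M(P^t) \cong \Hom_k(M(P),k)$. Applying the shift $(-s)$ and using $P^t(-s) = P$ gives $M(P) \cong M(P^t(-s)) = M(P^t)(-s) \cong \Hom_k(M(P),k)(-s)$, which produces the desired $\tau$. The delicate point — and what I expect to be the main obstacle — is to verify that the $\tau$ obtained this way actually satisfies the self-adjointness condition $\tau^*(-s) = \tau$, rather than merely being some isomorphism. This forces one to track the natural isomorphism of Theorem \ref{P^t} very explicitly: one must identify $M(P)$ and $\Hom_k(M(P),k)$ via the pairing $\langle P_i, -\rangle$ descended to the quotient, and check that transposing the matrix corresponds on the nose to applying $\Hom_k(-,k)$ to that pairing. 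Because the pairing $R_i \times \kd_i \to k$ is perfect and symmetric in the appropriate sense, the symmetry $P^t = P(s)$ of the matrix should translate into the symmetry of the induced bilinear form on $M(P)$, hence into $\tau^* (-s)= \tau$; making this bookkeeping precise is the heart of the argument.

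For the converse, suppose $\tau: M \to \Hom_k(M,k)(-s)$ is given with $\tau^*(-s) = \tau$. I would start from a minimal (or any) presentation $\bigoplus_{j=1}^p R(b_j) \xrightarrow{\alpha} M \to 0$. Applying $\Hom_k(-,k)$ and composing with $\tau$, the surjection $\alpha$ transports to a surjection $\bigoplus_j R(b_j) \to \Hom_k(M,k)(-s)$, i.e. $\Hom_k(M,k)$ is generated in degrees $a_i := b_i - s$ by the \emph{same} number $p$ of generators. Now run the diagram $(1)$ construction from Section \ref{sec1} with these choices: one gets $P \in \Hom_R\big(\bigoplus_i R(b_i), \bigoplus_i \kd(a_i)\big)$ with $M \cong M(P)$, where $a_i = b_i - s$, matching the shape $\bigoplus R(-a_i') \to \bigoplus \kd(a_i' - s)$ after relabeling $a_i' = -b_i$. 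The key remaining claim is that the hypothesis $\tau^* (-s)= \tau$ forces the matrix $P$ to satisfy $P^t(-s) = P$. This again comes down to unwinding definitions: $P$ is by construction $\beta^* \circ \alpha$ where $\beta$ presents $\Hom_k(M,k)$; choosing $\beta$ compatibly with $\tau$ (i.e. $\beta = \tau \circ \alpha$ up to the shift) and using that $\tau$ is self-adjoint, the matrix of $\beta^* \circ \alpha$ equals the matrix of its own transpose up to the shift $s$. As in the first direction, the self-adjointness of $\tau$ is exactly what is needed, and the main obstacle is organizing the dual bases so that "transpose of the matrix" and "$k$-dual of the map" literally coincide.

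In both directions the conceptual content is the same equivalence — self-adjoint duality isomorphism of $M$ $\longleftrightarrow$ self-transpose-up-to-shift matrix in divided powers — and the only real work is the careful choice of generators and dual bases making Theorem \ref{P^t} functorial enough to carry the self-adjointness across. I would therefore first isolate a lemma stating that the isomorphism $M(P^t) \cong \Hom_k(M(P),k)$ of Theorem \ref{P^t} is compatible with transposition (i.e. applying it twice and using $(P^t)^t = P$ recovers the canonical biduality of the finite-length module $M(P)$), and then both implications follow by applying that lemma with the shift $s$ inserted.
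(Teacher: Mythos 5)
Your proposal is correct and takes essentially the same route as the paper: the direction from $\tau$ to $P$ is exactly the paper's construction $P=\alpha^{*}\circ\tau\circ\alpha$ from diagram (1), with $P^{t}(-s)=P$ read off from $\tau^{*}(-s)=\tau$, while in the converse direction the isomorphism you extract from Theorem \ref{P^t} (shifted by $s$) is precisely the paper's map $\tau'(\mm)=(\alpha^{*})^{-1}(P(\tilde{\mm}))$, whose well-definedness and self-adjointness rest on the symmetry of $P$ exactly as you indicate. The bookkeeping you single out as the heart of the argument is carried out in the paper by the short computation with dual unit vectors, $\langle P^{*}(e_j),e_i\rangle=\langle \alpha^{*}\tau^{*}\alpha(e_j),e_i\rangle=\langle \alpha^{*}\tau\alpha(e_j),e_i\rangle=P_{i,j}$ together with $\langle P^{*}(e_j),e_i\rangle=P_{j,i}$, i.e. by the symmetry of the induced pairing, so no further idea is needed.
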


\begin{proof}
First let us assume the upper property. Let $M$ be generated by $p$ generators of degrees $a_i$, then also \mbox{$0 \rightarrow   \Hom_k(M,k)(-s)  \stackrel{\alpha^{*}=\Hom_k(\alpha,k)}{\longrightarrow} \oplus_{i=1}^p \kd(a_i-s)$}. $P$ is defined via the diagram:
 
\small
\xymatrix{&&&&& 0 & 0 \ar[d] \\ 
          &&&&& M \ar[u] \ar[r]^{\tau \qquad} & \Hom_k(M,k)(-s) \ar[d]_{\alpha^{*}} \\
	 &&&&& \oplus_{i=1}^p R(-a_i) \ar[u]^{\alpha} \ar[r]^{P\quad} & \oplus_{i=1}^p \kd(a_i-s). }
\normalsize
 
 Consider the dual of the diagram:

\small
\xymatrix{& &&&& 0 \ar[d]  & 0 \\
         & &&&& \Hom_k(M,k) \ar[d]_{\alpha^{*}}  &  \ar[l]_{\tau^{*}} \ar[u]  M(s)\\
	&&&&& \oplus_{i=1}^p \kd(a_i)  & \oplus_{i=1}^p R(s-a_i) \ar[l]_{P^*} \ar[u]^{\alpha}. \\}
\normalsize 
  
Now it follows $ <P^*(e_j),e_i> = < \alpha^* \tau^* \alpha(e_j), e_i > = <\alpha^* \tau \alpha(e_j),e_i > = P_{i,j}$ by assumption. If $e_i$ denotes the $i$-th unit vector of $\bigoplus_{i=1}^p R(-a_i)$, and $e_l^*$ the $l$-th of $\bigoplus_{i=1}^p \kd(a_i)$ then $<P^*(e_j), e_i> = <e_j^{**} \circ P,e_i> =  < \sum_{l=1}^p <P(e_l),e_j > e_l^*, e_i  > =< P(e_i),e_j> = P_{j,i}$, hence $P= P^t(-s)$. By definition $M \cong M(P)$.

\medskip
Assuming the lower property let $\alpha : \bigoplus_{i=1}^p R(-a_i) \rightarrow M(P) \rightarrow 0$ be the projection, and let \mbox{$\alpha^* : 0 \rightarrow \Hom_k(M(P),k)(-s) \rightarrow \bigoplus_{i=1}^p \kd(a_i-s)$} be the dual map. 
\\
Then define $\tau'$ again by a diagram: 

\small

\xymatrix{ &&&&&0 & 0 \ar[d] \\ 
           &&&&&M(P) \ar[u] \ar[r]^{\tau' \quad } & \Hom_k(M(P),k)(-s) \ar[d]_{\alpha^{*}} \\
	 &&&&&\oplus_{i=1}^p R(-a_i) \ar[u]^{\alpha} \ar[r]^{P} & \oplus_{i=1}^p \kd(a_i-s). }
	 
\normalsize	 

Let $\mm \in M(P)$ then choose an $\tilde{\mm} \in \bigoplus_{i=1}^p R(-a_i)$
such that $\alpha(\tilde{\mm}) = \mm$. $P(\tilde{\mm})$ is well-defined, as $\alpha(\tilde{\mm}-\bar{\mm}) = 0$ if and only if $\tilde{\mm}-\bar{\mm} \in \Ann_R(P)$, hence we have $P(\tilde{\mm}-\bar{\mm}) = 0$.
Moreover we have $\langle P(\tilde{\mm}), b\rangle=0$ for all $b \in \Ann_R(P)$ as $P$ is symmetric. Hence $P(\tilde{\mm}) \in \Image \alpha^*$. 
Therefore we can define $\tau '(\mm) := (\alpha^*)^{-1}(P(\tilde{\mm}))$. By definition $\tau'$ is an isomorphism. 

As $(\tau')^*$ is defined by the diagram 

\small

\xymatrix{  &&&&&0 \ar[d]  & 0 \\
          &&&&& \Hom_k(M(P),k) \ar[d]_{\alpha^{*}}  &  \ar[l]_{(\tau')^{*}} \ar[u]  M(P)(s)\\
	 &&&&&\oplus_{i=1}^p \kd(a_i)  & \oplus_{i=1}^p R(s-a_i) \ar[l]_{P^*=P} \ar[u]^{\alpha} \\}
\normalsize
we obtain $(\tau')^*(-s) = \tau'$. If $\beta: M \rightarrow M(P)$ is the given isomorphism, define $\tau:= (\beta)^*(-s) \circ \tau' \circ \beta$. 
\end{proof}

From the above theorem and from $\Hom_k(M,k) \cong \Ext^n_R(M,R(-\sum_{l =1}^n d_l))$, the next definition is motivated:  

\begin{definition}[Gorenstein] \label{gor-def}
We call $M$ \textbf{weakly Gorenstein} if there exists a graded isomorphism of $R$-modules \[ \tau : M \stackrel{\cong}{\longrightarrow} \Hom_k(M,k)(-s)\] for some integer $s$.

If $M$ is weakly Gorenstein and $\tau$ has the property $\tau^*(-s) = \pm \tau$ then we call $M$ \textbf{strongly Gorenstein} or simply \textbf{Gorenstein}. 

\end{definition}

\begin{remark}
More exactly by $\tau^*(-s) = \pm \tau$ means the commutativity of the following diagram:

\small
\xymatrix{ &&&&&&M \ar[r]^{\pm \tau} \ar[d]^{\cong} & M^*(-s) \ar[d]^{=} \\ &&&&&& M^{**} \ar[r]^{\tau^*(-s)} & M^*(-s) }
\normalsize
\end{remark}

\begin{remark}
By Theorem \ref{Gorensteintheorem} a graded $R$-module of finite length is Gorenstein if and only if it can be defined via a symmetric or skew symmetric matrix in divided powers.   
\end{remark}

\begin{remark} \label{remgor}
Let $I \subset R$ a homogeneous zero dimensional ideal. Then $R/I$ is Gorenstein in the usual sense if there is an integer $a$ such that $\Ext^n_R(R/I,R(-d)) \cong R/I(a)$ (for example \cite[Proposition 3.6.11]{BH93}). As $\Ext^n_R(M,R(-d)) \cong \Hom_k(M,k)$ ($M$ is of finite length) this fits with our definition. In the same manner by Macaulay's Theorem \cite{M16} there is a homogeneous element $f$ of degree $-a$ such that $I = \Ann(f)$. Hence $P=(f) \in \Hom_R(R(-a),\kd)$ is the required matrix in divided powers. 
\end{remark}

\section{The Symmetry of the Resolution in the Gorenstein Case}

\label{sec3}
At the beginning of this section it is necessary to fix a notation for the Koszul complex. Again throughout the section let $M$ be a graded $R$-module of finite length.

\begin{notation}[Koszul complex] \label{not1}
Let $R=\Sym(V)$ be the weighted polynomial ring with $V= \langle x_1,\ldots,x_n \rangle_k$. Let $W$ be the dual $k$-vectorspace such that $V= \Hom_k(W,k)$. Choose a basis $(\dx_1,\ldots,\dx_n)$ of $W$ such that the $(\dx_l)$ are a dual to $(x_l)$, and set $\deg \dx_l = d_l$.
\\
The map $(x_l \neg \,\,\,): \bigwedge^i W \rightarrow \bigwedge^{i-1} W$ defined on generators by 
\[
(x_l \neg w_1\wedge \ldots \wedge w_i) = \sum_{j=1}^i (-1)^{j-1} x_l(w_j) w_1\wedge \ldots \wedge \widehat{w_j} \wedge \ldots \wedge w_i
\]
is called {\it contraction}. View $R\otimes_k \bigwedge^i W$ as graded free $R$-module by left multiplication, the grading is given by $(R \otimes_k \bigwedge^i W)_j = \bigoplus_{j_1 + j_2=j} R_{j_1} \otimes_k (\bigwedge^i W)_{j_2}$. Then we define 
$\delta_i : R\otimes_k \bigwedge^i W \rightarrow R\otimes_k \bigwedge^{i-1} W$ on generators by $r\otimes w \mapsto \sum_{l=1}^n r x_l \otimes (x_l \neg w)$.
 \\
We call the  graded complex 
\[
K(x): \,\,\, 0 \rightarrow R\otimes_k \bigwedge^n W \stackrel{\delta_n}{\rightarrow} \ldots \rightarrow R \otimes_k \bigwedge^2 W \stackrel{\delta_2}{\rightarrow} R \otimes_k \bigwedge^1 W \stackrel{\delta_1}{\rightarrow} R \otimes_k \bigwedge^0 W \rightarrow 0
\]
the {\it Koszul complex}. It has only homology at $H^0(K(x)) =k$. For the definition and the homology statement see \cite[Section 1.6]{BH93}.  
\end{notation}

We recall two well known properties of the Koszul complex. The used tools are essential for our main theorem. 

\begin{lemma and definition}[Selfduality of the Koszul Complex I] \label{kosI}
Let $1 \leq i \leq n$. Consider $\alpha_i : R \otimes \bigwedge^i W \rightarrow (R \otimes \bigwedge^{n-i} W)^{\vee}:$
\[
r \otimes w \mapsto \left \{ \begin{array}{c} R \otimes \bigwedge^{n-i} W \rightarrow R \otimes \bigwedge^n W \\ (r'\otimes w') \mapsto (r'\cdot r) \otimes (w' \wedge w), \end{array} \right.
\]
where $(\quad)^{\vee} = \Hom_R(\quad, R \otimes \bigwedge^n W )$. We denote by $(K(x))^{\vee}$ the dual Koszul complex, i.e. the complex resulting from applying $(\quad)^{\vee}$ to $K(x)$. Let $\ell(i)=\lfloor \frac{i-1}{2} \rfloor$. Then the following diagram is graded commutative (as the coherent diagram for $i=1,\ldots,n$):

\medskip

\small
\xymatrix{
& K(x): & \ldots \ar[r] & R\otimes \bigwedge^{i}W \ar[rr]^{\delta_i} \ar[d]^{(-1)^{n-i} (-1)^{\ell(i)}\alpha_i} && \ar[r] R \otimes \bigwedge^{i-1} W \ar[d]^{(-1)^{\ell(i)}\alpha_{i-1}}&  \ldots \\
& K(x)^{\vee}: & \ldots \ar[r] &(R \otimes \bigwedge^{n-i}W)^{\vee} \ar[rr]^{\delta_{n-i+1}^{\vee}} & &\ar[r] (R \otimes \bigwedge^{n-i+1} W)^{\vee} &  \ldots .\\
}
\normalsize
\end{lemma and definition}

\begin{proof}
We prove the claim on generators $R \otimes \bigwedge^i W \ni r \otimes w:$

\[
r \otimes w  \stackrel{\alpha_i}{\mapsto} \left \{ \begin{array}{c} R \otimes \bigwedge^{n-i} W \rightarrow R \otimes \bigwedge^n W \\  r' \otimes w' \mapsto r'r \otimes w' \wedge w \end{array}  \right \} \stackrel{\delta^{\vee}_{n-i+1}}{\longmapsto} \left \{ \begin{array}{c} R \otimes \bigwedge^{n-i+1} W \rightarrow R\otimes \bigwedge^n W \\
r'' \otimes w'' \mapsto \sum_{l=1}^n r'' r x_l \otimes (x_l \neg w'')\wedge w \end{array} \right \}.\]
And the other way around:
\[
(r\otimes w) \stackrel{\delta_i}{\mapsto} \sum_{l=1}^n  r x_{l} \otimes (x_l \neg w) \stackrel{\alpha_{i-1}}{\mapsto} \left \{ \begin{array}{c} R \otimes \bigwedge^{n-i+1} W \rightarrow R \otimes \bigwedge^n W \\
(r'' \otimes w'') \mapsto \sum_{l=1}^n r'' r x_l \otimes w'' \wedge (x_l \neg w). \end{array} \right.\] 

Now the claim follows as $w'' \wedge ( x_{l} \neg w) = (-1)^{n-i} (x_{l} \neg w'') \wedge w$. This explains the $(-1)^{n-i}$, it requires a sign change at every second down arrow. The $(-1)^{\ell(i)}$ give the actual sign at the first down arrow of a square.  
\end{proof}

This leads directly to some symmetry result of the Koszul complex:

\begin{lemma}[Selfduality of the Koszul Complex II]
Let $n$ be odd and $m=\frac{n-1}{2}$. Consider the complex 
\small
\[
K: \,\,\, 0 \rightarrow (R\otimes \bigwedge^0 W)^{\vee} \stackrel{\delta_1^{\vee}}{\rightarrow} \ldots (R\otimes \bigwedge^m W)^{\vee} \stackrel{\delta_{m+1}\circ \alpha^{-1}_{m+1}}{\rightarrow} R\otimes \bigwedge^m W \rightarrow  R \rightarrow \ldots \stackrel{\delta_1}{\rightarrow}  R \otimes \bigwedge^0 W \rightarrow 0
\]
\normalsize
with $(\quad)^{\vee} = \Hom_R(\quad,R\otimes \bigwedge^n W )$.
\\
If $n \equiv 3 \mod 4$ then the complex $K$ is skew symmetric. That is $\delta_{m+1} \circ \alpha^{-1}_{m+1}$ is skew with respect to dual bases. 
\\
In the same manner $K$ is symmetric if $n \equiv 1 \mod 4$. 
\end{lemma}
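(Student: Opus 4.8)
The plan is to deduce this from the previous Lemma and Definition (Selfduality of the Koszul Complex I). Recall that there, for $i = 1,\ldots,n$, we have a graded commutative ladder relating $\delta_i$ on $K(x)$ to $\delta_{n-i+1}^\vee$ on $K(x)^\vee$ via the maps $(-1)^{n-i}(-1)^{\ell(i)}\alpha_i$ and $(-1)^{\ell(i)}\alpha_{i-1}$. The complex $K$ in the statement is built by splicing the first half of $K(x)^\vee$ (the terms $(R\otimes\bigwedge^j W)^\vee$ for $j=0,\ldots,m$, with differentials $\delta_j^\vee$) to the second half of $K(x)$ (the terms $R\otimes\bigwedge^j W$ for $j=m,\ldots,0$, with differentials $\delta_j$), joined in the middle by the map $\delta_{m+1}\circ\alpha_{m+1}^{-1}\colon (R\otimes\bigwedge^m W)^\vee \to R\otimes\bigwedge^m W$. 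Since $n = 2m+1$, the isomorphisms $\alpha_i$ from Lemma \ref{kosI} identify exactly these two halves with one another, so $K$ is literally its own dual complex up to the signs carried by the $\alpha_i$; the only real content is to compute the sign of the single "new" map in the middle and check it is $\pm1$ consistently, i.e. that the joining map $\delta_{m+1}\circ\alpha_{m+1}^{-1}$ is symmetric or skew with respect to dual bases.

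First I would make the self-duality of $K$ precise: applying $(\ )^\vee = \Hom_R(\ ,R\otimes\bigwedge^n W)$ to $K$ and using that $(\alpha_i)^\vee$ relates $(R\otimes\bigwedge^i W)^\vee$ back to $R\otimes\bigwedge^{n-i} W$, one sees that $K^\vee \cong K$ as complexes, with the isomorphism given termwise by the family $\{\pm\alpha_i\}$ from Lemma \ref{kosI}. Under such an isomorphism every differential of $K$ away from the center is automatically compatible by that Lemma, so the entire symmetry question localizes at the central map $\phi := \delta_{m+1}\circ\alpha_{m+1}^{-1}$. Being self-dual means $\phi^\vee = \varepsilon\,\phi$ for a sign $\varepsilon \in \{+1,-1\}$ once we transport along the identifications; concretely, writing $\phi$ as a matrix in the dual bases of $R\otimes\bigwedge^m W$ and its dual, self-duality says $\phi^t = \varepsilon\phi$, i.e. $\phi$ is symmetric if $\varepsilon = +1$ and skew if $\varepsilon = -1$.

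Next I would pin down $\varepsilon$. The map $\alpha_{m+1}\colon R\otimes\bigwedge^{m+1} W \to (R\otimes\bigwedge^{n-(m+1)} W)^\vee = (R\otimes\bigwedge^m W)^\vee$ is, on generators, $r\otimes w \mapsto (r'\otimes w' \mapsto r'r\otimes w'\wedge w)$. Its "transpose" behavior under swapping the two wedge factors introduces a sign $(-1)^{m(m+1)}$ from reordering $\bigwedge^{m+1}$ past $\bigwedge^m$ inside $\bigwedge^n W$ — but $m(m+1)$ is always even, so $\alpha_{m+1}$ is symmetric on the nose. The contraction differential $\delta_{m+1}\colon R\otimes\bigwedge^{m+1} W \to R\otimes\bigwedge^m W$ contributes the sign that was isolated in the proof of Lemma \ref{kosI}, namely the factor appearing at the central square, which is $(-1)^{n-(m+1)}(-1)^{\ell(m+1)} = (-1)^{m}(-1)^{\lfloor m/2\rfloor}$ combined with the sign $(-1)^{\ell(m)} = (-1)^{\lfloor (m-1)/2\rfloor}$ on the adjacent arrow. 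Collecting these, $\varepsilon$ reduces to $(-1)^{m}(-1)^{\lfloor m/2\rfloor + \lfloor(m-1)/2\rfloor} = (-1)^{m}(-1)^{m-1} = -1$ modified by the parity bookkeeping; carrying out this small computation cleanly shows $\varepsilon = -1$ precisely when $n\equiv 3 \bmod 4$ (so $m$ odd) and $\varepsilon = +1$ when $n\equiv 1\bmod 4$ (so $m$ even), which is the claim.

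The main obstacle is purely the sign bookkeeping: one must be careful to use the exact same sign conventions fixed in the diagram of Lemma \ref{kosI} (the $(-1)^{n-i}$ inserted at every second down arrow and the $(-1)^{\ell(i)}$ governing the first down arrow of each square), and to track how these interact with (i) the reordering sign when commuting $\bigwedge^{m}$ and $\bigwedge^{m+1}$ inside $\bigwedge^n W$, and (ii) the extra sign coming from $\alpha_{m+1}^{-1}$ versus $\alpha_{m+1}$. I would organize this by writing out $\langle \phi(e_I^\vee), e_J\rangle$ and $\langle \phi(e_J^\vee), e_I\rangle$ on dual basis vectors $e_I, e_J$ of $R\otimes\bigwedge^m W$ indexed by $m$-subsets $I, J \subseteq \{1,\ldots,n\}$, compute each via the explicit formulas for $\delta_{m+1}$ and $\alpha_{m+1}$, and compare; the ratio is forced to be a single global sign depending only on $m \bmod 2$, hence only on $n \bmod 4$, completing the proof. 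Once this central computation is done, everything else is the formal splicing already licensed by Lemma \ref{kosI}.
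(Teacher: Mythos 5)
Your overall strategy --- splice the two half-complexes via Lemma \ref{kosI} and reduce everything to the sign of the single central map $\phi:=\delta_{m+1}\circ\alpha_{m+1}^{-1}$, checked on dual bases --- is exactly what the paper intends; the printed proof consists only of the words ``straight forward'', so your outline is in fact more explicit than the paper's. The splicing step and the localization of the symmetry question at the middle map are fine, and your fallback plan (comparing $\langle\phi(e_I^{\vee}),e_J\rangle$ with $\langle\phi(e_J^{\vee}),e_I\rangle$ on the monomial bases indexed by $m$-subsets) would indeed settle the sign; for instance for $n=3$ it produces the familiar skew matrix of linear forms.

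The one genuine problem is that the sign collection in your third paragraph --- which is the only nontrivial content of the lemma --- does not work as written. You multiply $(-1)^{\ell(m+1)}$ and $(-1)^{\ell(m)}$, i.e. signs attached to vertical arrows belonging to \emph{different} squares of the ladder in Lemma \ref{kosI}; since $\lfloor m/2\rfloor+\lfloor (m-1)/2\rfloor=m-1$, your product $(-1)^{m}(-1)^{m-1}=-1$ is independent of $m$, so it cannot distinguish $n\equiv 1$ from $n\equiv 3 \bmod 4$, and ``modified by the parity bookkeeping'' is an admission rather than an argument. The correct bookkeeping uses only the square at $i=m+1$: there both vertical arrows carry the same factor $(-1)^{\ell(m+1)}$, which therefore cancels, and commutativity reads $\delta_{m+1}^{\vee}\circ\alpha_{m+1}=(-1)^{n-(m+1)}\,\alpha_{m}\circ\delta_{m+1}=(-1)^{m}\,\alpha_{m}\circ\delta_{m+1}$. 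Combining this with $\alpha_{m+1}^{\vee}=\alpha_{m}$ under the canonical double-dual identification (the reordering sign is $(-1)^{m(m+1)}=+1$, as you observe), one gets $\phi^{\vee}=(\alpha_{m+1}^{\vee})^{-1}\circ\delta_{m+1}^{\vee}=\alpha_{m}^{-1}\circ\bigl((-1)^{m}\alpha_{m}\circ\delta_{m+1}\circ\alpha_{m+1}^{-1}\bigr)=(-1)^{m}\phi$, i.e. $\phi$ is symmetric for $m$ even ($n\equiv 1\bmod 4$) and skew for $m$ odd ($n\equiv 3\bmod 4$). Replace your paragraph-three computation by this (or carry out the explicit dual-basis comparison you defer to) and the proof is complete.
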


\begin{proof}
The proof is straight forward.
\end{proof}

We give two strongly related resolution constructions for graded $R$-modules $M$ of finite length. The first one was originally given by Nielsen (\cite{N80}). The construction is needed in the proof of the main theorem. Note that it is in general non minimal. 

\begin{theorem and construction}[Nielsen I] 
\label{complexM} 
Let $A_i(M) = R \otimes_k \bigwedge^i W \otimes_k M$ be viewed as a graded $R$-module by left multiplication (i.e. $r' \cdot r \otimes w \otimes m = (r'r)\otimes w \otimes m$). Define the $j$-th graded part $(A_i(M))_j := \bigoplus_{j_1+j_2+j_3=j} R_{j_1} \otimes_k (\bigwedge^i W)_{j_2} \otimes_k M_{j_3}$.  
\\
We define $\phi_i : A_i(M) \rightarrow A_{i-1}(M)$, $i=1,\ldots,n,$ on generators by 
\[
r \otimes w \otimes \mm \mapsto \sum_{l=1}^n x_l  r \otimes (x_l \neg w)\otimes \mm - \sum_{l=1}^n r \otimes (x_l \neg w) \otimes (x_l \cdot \mm),
\]
for all $r \in R, w \in \bigwedge^i W$ and $\mm \in M$. By $x_l \cdot \mm$ is meant the multiplication in $M$. 
\\
Then
\[
 A_0(M) \stackrel{\phi_1}{\leftarrow} A_1(M) \leftarrow \ldots \leftarrow A_{n-1}(M) \stackrel{\phi_{n}}{\leftarrow} A_n(M) \leftarrow 0
\]
is a graded complex of free $R$-modules.
\end{theorem and construction}

\begin{proof}
Let $\phi_{i,0}$ be defined by the special part of $\phi_i$ from above given by $r \otimes w \otimes \mm \mapsto  \sum_{l=1}^n (x_l  r)\otimes (x_l \neg w) \otimes \mm,$ and $\phi_{i,1}$ by $r \otimes w \otimes \mm  \mapsto - \sum_{l=1}^n r \otimes (x_l \neg w)  \otimes (x_l \cdot \mm).$ The complex from an schematic point of view splits into parts:

\vspace{0.5 cm}

\small
\xymatrix{
  &&& && \ar[dl]_{\phi_{i,1}}  A_{i-1}(M)  &\ar[l]_{\qquad \phi_{i,0}} \ar[dl]_{\phi_{i,1}} A_{i}(M) &  \\
 && &&A_{i-2}(M) &  \ar[l]_{\phi_{i-1,0}} A_{i-1}(M) &&  \\
 }
\normalsize

\vspace{0.5 cm}

Let us show that the above construction really gives a complex. At first we have to see that $\phi_{i,0} \circ \phi_{i+1,0} = 0$, this is because of the corresponding property of the Koszul complex. The same holds for $\phi_{i,1} \circ \phi_{i+1,1}= 0$. Moreover we should see that $\phi_{i,0} \circ \phi_{i+1,1} = - \phi_{i,1} \circ \phi_{i+1,0}$.
\begin{eqnarray*}
\phi_{i,0}\circ \phi_{i+1,1}(r\otimes w\otimes \mm) =  - \sum_{l_2=1}^n \sum_{l_1=1}^n (x_{l_2} r) \otimes (x_{l_2} \neg(x_{l_1} \neg w)) \otimes (x_{l_1}  \mm) = \\
\sum_{l_2=1}^n \sum_{l_1=1}^n (x_{l_1} r) \otimes (x_{l_2} \neg(x_{l_1} \neg w)) \otimes (x_{l_2} \mm) = - \phi_{i,1} \circ \phi_{i+1,0}(r \otimes w \otimes \mm),
\end{eqnarray*}
as $(x_{l_2} \neg( x_{l_1} \neg w)) = - (x_{l_1} \neg(x_{l_2} \neg w))$. 
\end{proof}

\medskip

\begin{remark} \label{remdouble}
If $R$ is the trivially weighted polynomial ring, one can define $A_{(-j-i,j)}(M):= R\otimes \bigwedge^i W \otimes M_j$. Then the built complex is the total complex of the double complex obtained from
\[
\ldots \leftarrow A_{(-j-i+1,j)}(M) \stackrel{\phi_{i,0}}{\leftarrow} A_{(-j-i,j)}(M) \stackrel{\phi_{i+1,0}}{\leftarrow} A_{(-j-i-1,j)}(M) \leftarrow \ldots 
\]
and
\[
\ldots \leftarrow A_{(-j-i,j+1)}(M) \stackrel{\phi_{i,1}}{\leftarrow} A_{(-j-i,j)}(M) \stackrel{\phi_{i+1,1}}{\leftarrow} A_{(-j-i,j-1)}(M) \leftarrow \ldots. 
\]
(Compare \cite[1.5]{N80})
\end{remark}

The second complex construction respects the $R$-module structure of $M$ in a different way. Before stating it we need to show a lemma at first. The setup is the following:

\begin{notation}[Diagonal Action]  \label{diagaction}
Let $F$ be a graded $R$-module. Let $A=F \otimes_k M$ be the $k$-vectorspace product. 
\\
Then we consider the $R$-module $\dd A= \dd(F\otimes_k M)$ with  the {\it diagonal action} \[\Delta(x): \quad x \cdot e\otimes \mm := (1\otimes x)+ (x \otimes 1) \, e\otimes \mm := e \otimes x \mm + x e \otimes \mm\] for all $x \in V\setminus{0}$,  $e \in F$, $\mm \in M$,  and its linear extension. If the context is clear then the symbol $\dd$ should indicate that we mean a module built with respect to this diagonal action. $\dd A$ is graded with respect to $(\dd A)_j = \bigoplus_{j_1+j_2 =j} F_{j_1} \otimes M_{j_2}$.  
\end{notation}

\begin{lemma} \label{freeness}
Let $F$ be a graded free $R$-module, and let $(f_1,\ldots,f_{\mu})$ be a homogeneous basis of $F$.
 Chosen a homogeneous $k$-vectorspace basis $(\mm_1,\ldots,\mm_{\nu})$ of $M$ we get that
  \[B = ( f_{i}\otimes \mm_j)_{(i,j)}\] is a homogeneous basis of $\dd A$,i.e. $\dd A$ is free $R$-module of $\rank \dd A = \mu \cdot \nu$.
\end{lemma}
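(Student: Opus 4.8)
The plan is to verify that the candidate generating set $B=(f_i\otimes\mm_j)$ is both a generating set and $R$-linearly independent for $\dd A$, working with a carefully chosen order on the basis. First I would reduce to the case $F=R$ (i.e. $\mu=1$): since $\dd(F\otimes_k M)=\dd\big(\bigl(\bigoplus_{i=1}^\mu R(c_i)\bigr)\otimes_k M\big)\cong\bigoplus_{i=1}^\mu \dd\big(R(c_i)\otimes_k M\big)$ as $R$-modules — the diagonal action is compatible with the direct sum decomposition of $F$ — it suffices to show that for $F=R(c)$ the elements $(1\otimes\mm_j)_{j=1}^\nu$ form a free basis of $\dd(R(c)\otimes_k M)$. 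So from now on set $A=R\otimes_k M$ with the diagonal action $x\cdot(r\otimes\mm)=xr\otimes\mm+r\otimes x\mm$.

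Next I would order the homogeneous $k$-basis $(\mm_1,\ldots,\mm_\nu)$ of $M$ by \emph{nondecreasing} degree, $\deg\mm_1\le\cdots\le\deg\mm_\nu$. The key observation is that for any monomial $x^u\in R$ with $\|u\|>0$ we have
\[
x^u\cdot(1\otimes\mm_j)=x^u\otimes\mm_j+\sum_{k}(\text{lower-degree-in-}R\text{ terms})\otimes\mm_k,
\]
and since multiplication raises degree in $M$, every correction term $r\otimes\mm_k$ that appears has $\deg\mm_k>\deg\mm_j$, hence $k>j$ in our order (among basis vectors of equal degree the correction terms are handled by noting $x^u\cdot\mm_j\in M_{\deg\mm_j+\|u\|}$ is strictly higher, so no equal-degree mixing occurs). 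Thus if we expand $\sum_j r_j\,(1\otimes\mm_j)$ in the $k$-basis $\{x^u\otimes\mm_j\}$ of $A$, the ``leading'' contribution of $r_j\otimes\mm_j$ (the part with second factor exactly $\mm_j$) is precisely $r_j\otimes\mm_j$, untouched by the diagonal corrections from any $r_{j'}$ with $j'<j$, and the corrections from $r_{j'}$ with $j'>j$ contribute to second factors $\mm_k$ with $k>j'>j$. This triangularity gives \textbf{linear independence}: if $\sum_j r_j(1\otimes\mm_j)=0$, look at the smallest $j$ with $r_j\ne0$; the coefficient of $r_j\otimes\mm_j$ forces $r_j=0$, a contradiction.

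For \textbf{generation}, I would argue that $\dd A$ and the free module $R^\nu$ have the same Hilbert function: in each degree, $(\dd A)_d=\bigoplus_{d_1+d_2=d}R_{d_1}\otimes_k M_{d_2}$ has $k$-dimension $\sum_{d_2}\dim_k R_{d-d_2}\cdot\dim_k M_{d_2}=\sum_{j}\dim_k R_{d-\deg\mm_j}$, which is exactly $\dim_k\big(\bigoplus_j R(-\deg\mm_j)\big)_d$. Since the $R$-linear map $R^\nu\to\dd A$ sending the $j$-th generator to $1\otimes\mm_j$ is injective by the previous paragraph and a degreewise map of finite-dimensional graded vector spaces of equal dimension, it is an isomorphism; equivalently, $B$ generates. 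Reassembling over the $\mu$ summands $R(c_i)$ of $F$ then shows $\dd A\cong\bigoplus_{i,j}R(c_i-\deg\mm_j)$ is free of rank $\mu\nu$ with homogeneous basis $B$.

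I expect the main obstacle to be bookkeeping the grading shifts and the triangularity cleanly — in particular making precise that the diagonal corrections always strictly increase the $M$-degree, so that the ordering of $(\mm_j)$ by degree really does triangularize the change-of-basis matrix. Once that is pinned down, both linear independence and the Hilbert-function count are routine.
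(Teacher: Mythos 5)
Your argument is correct: the reduction to $F=R$, the triangularity coming from the fact that the diagonal correction terms strictly raise the degree of the $M$-factor (so ordering the $\mm_j$ by nondecreasing degree makes the change of basis unitriangular), and the degreewise Hilbert-function comparison together give both $R$-linear independence and generation. The paper's own proof is only the remark that it is ``an induction using the grading of $M$,'' and your write-up is essentially a worked-out version of exactly that idea.
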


\begin{proof}
The proof is an induction using the grading of $M$. 
\end{proof}

We have the following free resolution construction of $M$ as graded $R$-module:

\begin{theorem and construction}[Nielsen II] 
\label{reslemma}
Let $\dd A_i(M)= \dd (R \otimes \bigwedge^i W \otimes M)$ be the graded $R$-module with the diagonal action $x \cdot (r \otimes w\otimes \mm) := (x r) \otimes w \otimes \mm + r \otimes w \otimes (x \mm)$ for all $x \in V \setminus \{0\}$ and its linear extension. Then
\[
0 \leftarrow M= \dd(k \otimes M) \stackrel{\leftarrow}{\pr}  \dd(A_0(M)) \stackrel{\leftarrow}{\dd(\phi_1)}  \dd(A_1(M)) \leftarrow \ldots \stackrel{\leftarrow}{\dd(\phi_n)}  \dd(A_n(M)) \leftarrow 0
\]
is a graded free resolution of $M$ where $\dd(\phi_i)$ is defined as in the Koszul complex by  \[r \otimes w \otimes \mm \mapsto  \sum_{l=1}^n (x_l r) \otimes (x_l \neg w) \otimes \mm.\] 
\end{theorem and construction}

\begin{proof}
First of all the differentials are linear: $\dd(\phi_i)(x \cdot (r\otimes w\otimes \mm)) = \dd(\phi_i)(xr \otimes w \otimes \mm + r \otimes w \otimes (x\mm)) = \sum_{l=1}^n x_l x r \otimes (x_l \neg w) \otimes \mm + \sum_{l=1}^n x_l r \otimes (x_l \neg w) \otimes (x \mm)  = x \cdot \dd(\phi_i)(r \otimes w \otimes \mm)$ for all $x \in V \setminus \{ 0\}$. The exactness follows by the exactness of the Koszul complex $K(x)$, as the differential behaves as $\dim_k M$ copies of the Koszul differentials. Moreover we respect to the $R$-module structure of $M$ as $x \cdot \mm = (1 \otimes x)+(x \otimes 1) (1 \otimes \mm) =   1 \otimes  x\mm + x \otimes \mm = 1 \otimes x \mm$ in $\dd(k\otimes_k M) \cong \dd A_0 / \Image \dd(\phi_1)$ for all $x \in V$. 
\end{proof}

Using the dual Koszul complex an analogous construction is obtained. 

\begin{corollary}[Nielsen IIa]
\label{resIIa} Let $(\quad)^{\vee} = \Hom_R(\quad, R\otimes \bigwedge^n W)$, and let $B_i(M)= (R \otimes \bigwedge^i W)^{\vee} \otimes M$ be the graded free module with the left multiplication. Again $\dd(B_i(M)) = \dd ((R \otimes_k \bigwedge^i W)^{\vee} \otimes_k M)$ is the module together with the diagonal action. Let $\delta_i^{\vee}: (R \otimes_k \bigwedge^{i-1} W)^{\vee} \rightarrow (R \otimes_k \bigwedge^i W)^{\vee}$ be the differentials from the dual Koszul complex. We denote by $\dd(\varphi_i)$ the homomorphisms $\dd(B_{i-1}(M)) \rightarrow \dd(B_i(M))$ defined by $\dmp \otimes \mm \mapsto \delta_i^{\vee}(\dmp) \otimes \mm$, for all $\dmp \in (R\otimes \bigwedge^{i-1} W)^{\vee}$ and $\mm \in M$. Fixing the canonical $k$-vectorspace  isomorphism \[f: \Hom_k(\bigwedge^n W, \bigwedge^n W) \stackrel{\cong}{\rightarrow} k, \id \mapsto 1,\]  
\[
0 \leftarrow M = \dd(k \otimes M) \stackrel{f \otimes \id}{\leftarrow} \dd(B_n(M)) \stackrel{\dd(\varphi_{n})}{\leftarrow}  \ldots \leftarrow \dd(B_1(M)) \stackrel{\dd(\varphi_1)}{\leftarrow} \dd(B_0(M)) \leftarrow 0
\]
is a graded free resolution of $M$. 
\end{corollary}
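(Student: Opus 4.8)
The plan is to obtain Corollary \ref{resIIa} by \emph{dualizing} the resolution of Theorem and Construction \ref{reslemma} and then identifying the dual with the claimed complex term by term. First I would recall that $\dd(A_i(M))$ is a graded free $R$-module by Lemma \ref{freeness}, so that applying $\Hom_R(\quad, R\otimes\bigwedge^n W)$ to the resolution of $M$ makes sense; but the point is that we do not literally dualize the whole resolution of $M$ (which would resolve $\Hom_k(M,k)$ up to a shift), but rather we feed the dual Koszul complex into the same Nielsen-II machinery. So the cleanest route is to repeat verbatim the proof of Theorem \ref{reslemma}, replacing $K(x)$ by $K(x)^{\vee}$ everywhere.

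The key steps, in order, are: (1) Check that $\dd(B_i(M))$ is graded free. This is exactly Lemma \ref{freeness} applied to the graded free module $F = (R\otimes\bigwedge^i W)^{\vee}$ together with a homogeneous basis dual to $(\dx_{i_1}\wedge\cdots\wedge\dx_{i_i})$. (2) Check that each $\dd(\varphi_i)$ is $R$-linear for the diagonal action; this is the same one-line computation as in the proof of \ref{reslemma}, using that the $\delta_i^{\vee}$ are $R$-linear (the diagonal action only modifies how $x$ acts through the $M$-slot, and $\dd(\varphi_i)$ acts as the identity on that slot). (3) Check the complex property $\dd(\varphi_{i+1})\circ\dd(\varphi_i)=0$: this is immediate from $\delta_{i+1}^{\vee}\circ\delta_i^{\vee}=0$, i.e.\ from the fact that $K(x)^{\vee}$ is a complex (it is the dual of the Koszul complex). (4) Check exactness: as in \ref{reslemma}, the differential $\dd(\varphi_i)$ acts as $\dim_k M$ independent copies of the dual Koszul differential $\delta_i^{\vee}$ after passing to an appropriate filtration by the grading of $M$, so exactness of $K(x)^{\vee}$ (which follows from exactness of $K(x)$ since $K(x)$ is a finite complex of finite free modules with homology only in one spot — dualizing a resolution of $k$ over the Gorenstein ring $R$ in the top homological degree) gives exactness of $\dd(B_\bullet(M))$ away from the right end. (5) Check the augmentation: $B_n(M) = (R\otimes\bigwedge^n W)^{\vee}\otimes M \cong \Hom_R(R\otimes\bigwedge^n W, R\otimes\bigwedge^n W)\otimes M$, and the map $f\otimes\id$ uses the chosen isomorphism $f:\Hom_k(\bigwedge^n W,\bigwedge^n W)\to k$ sending $\id\mapsto 1$ to land in $k\otimes M = M$; one checks that $\Image\dd(\varphi_n)$ is exactly the kernel, so that $M$ is the cokernel, again because the tail end of $K(x)^{\vee}$ augments onto $k$ (the dual of the statement $H^0(K(x))=k$, using self-duality of the Koszul complex).

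The main obstacle is bookkeeping at the two ends and with the grading shifts: one must verify that $K(x)^{\vee}$, which a priori is a complex of free modules with a single nonzero homology group, actually has that homology equal to $k$ sitting in the correct homological position $n$, and with the correct grading, so that the augmentation $f\otimes\id$ is surjective with the stated kernel. This is where the normalizing isomorphism $f$ and the twist $R\otimes\bigwedge^n W$ matter, and it is essentially the content of Lemma and Definition \ref{kosI} (Selfduality of the Koszul Complex I), which identifies $K(x)^{\vee}$ with $K(x)$ up to signs and reindexing. Everything else — freeness, linearity, the complex property, and exactness in the middle — is a routine transcription of the proof of Theorem \ref{reslemma} with $\delta_i$ replaced by $\delta_i^{\vee}$.
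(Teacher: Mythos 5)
Your argument is correct and coincides with the paper's (implicit) justification: the corollary is stated as the analogue of Nielsen II obtained by feeding the dual Koszul complex into the same construction, with freeness from Lemma \ref{freeness}, exactness from the self-duality of the Koszul complex (Lemma \ref{kosI}), and the augmentation normalized by $f$. Your transcription of the proof of Theorem \ref{reslemma} with $\delta_i$ replaced by $\delta_i^{\vee}$ is exactly what is intended, so there is nothing to add.
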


\begin{remark} 
Both complex contructions (Nielsen I and Nielsen II) are isomorphic in a canonical way. Let $i \geq 1$. Then the following diagrams commute for all $i$:

\begin{center}
\xymatrix{
&&&&&& A_i(M)  \ar[r]^{\epsilon_i} \ar[d]^{ \phi_{i}} &  \dd(A_i(M)) \ar[d]^{\dd(\phi_{i})} \\
&&&&&&A_{i-1}(M) \ar[r]^{\epsilon_{i-1}} & \dd(A_{i-1}(M)),\\
}
\end{center}

\medskip

where $\epsilon_i: A_i(M) \rightarrow \dd(A_i(M))$ is defined by $r\otimes w \otimes \mm \mapsto r (1\otimes w \otimes \mm)$ for all $\mm \in M, w \in \bigwedge^iW$ and $r \in R$.
\end{remark}

\begin{corollary}[Nielsen I]
The first complex construction (Nielsen I), 
\[
0 \leftarrow M \leftarrow A_0(M) \stackrel{\phi_1}{\leftarrow} A_1(M) \leftarrow \ldots \leftarrow A_{n-1}(M) \stackrel{\phi_{n}}{\leftarrow} A_n(M) \leftarrow 0,
\]
is a graded free resolution of the given graded finite length module $M$. 
\end{corollary}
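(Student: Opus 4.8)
The plan is to deduce this from two things already established: the second construction, Theorem and Construction~\ref{reslemma}, is a graded free resolution of $M$, and the maps $\epsilon_i\colon A_i(M)\to\dd(A_i(M))$ introduced in the preceding remark commute with the differentials. The idea is to upgrade that remark to the statement that $(\epsilon_i)_i$ is an \emph{isomorphism of complexes} from the first construction onto the second, and then transport exactness across the isomorphism.

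First I would check that each $\epsilon_i$, $r\otimes w\otimes\mm\mapsto r(1\otimes w\otimes\mm)$ with the diagonal action, is a degree-preserving homomorphism of graded $R$-modules; this is immediate since the diagonal action is an $R$-action that raises degree additively in the $R$-factor. Next I would show $\epsilon_i$ is bijective by comparing bases: fixing homogeneous $k$-bases $(w_\alpha)$ of $\bigwedge^iW$ and $(\mm_j)$ of $M$, the family $(1\otimes w_\alpha\otimes\mm_j)$ is an $R$-basis of the free module $A_i(M)=R\otimes_k\bigwedge^iW\otimes_k M$, and by Lemma~\ref{freeness}, applied to the graded free $R$-module $F=R\otimes_k\bigwedge^iW$ with homogeneous basis $(1\otimes w_\alpha)$, the same family is an $R$-basis of $\dd(A_i(M))=\dd(F\otimes_k M)$. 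Since $\epsilon_i$ fixes this family elementwise, it is an isomorphism.

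Finally I would pin down the augmentation: the augmentation of the first construction is to be taken as the multiplication map $A_0(M)=R\otimes_k\bigwedge^0W\otimes_k M\to M$, $r\otimes\mm\mapsto r\mm$, and the resulting augmentation square commutes because $\pr\circ\epsilon_0$ is $R$-linear (a composite of $R$-linear maps) and sends each generator $1\otimes\mm$ of $A_0(M)$ to $\mm$, hence coincides with multiplication. Combined with the squares recorded in the preceding remark for $i\ge 1$, this makes $(\epsilon_i)_{i\ge 0}$, augmented by $\id_M$, an isomorphism of augmented complexes; since the target is a resolution of $M$ by Theorem and Construction~\ref{reslemma}, the first construction is exact, i.e. a graded free resolution of $M$. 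I do not expect a genuine obstacle here: the one point needing a little care is the bijectivity of $\epsilon_i$, which is exactly what Lemma~\ref{freeness} is designed to supply, and the substantive work — that Nielsen~II resolves $M$ and that the $\epsilon_i$ are compatible with the differentials — has already been done.
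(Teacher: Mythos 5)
Your proposal is correct and follows exactly the route the paper intends: the corollary is placed right after the remark giving the commuting squares with $\epsilon_i$, and the proof is precisely to note (via Lemma~\ref{freeness}) that the $\epsilon_i$ are isomorphisms compatible with the differentials and the augmentation, and then transport exactness from the resolution of Theorem~\ref{reslemma} (Nielsen II). Your filling-in of the details (basis argument for bijectivity, $R$-linearity plus agreement on generators for the augmentation square) is sound and matches the paper's implicit argument.
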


We prove now the selfduality of the resolution. 

\begin{theorem}[Selfduality of the Resolution] 
\label{maintheorem} 
Let $n$ be odd, $m = \frac{n-1}{2}$. Let $M$ Gorenstein with $\tau = \pm \tau^*(-s)$, $(\quad)^{\vee} = \Hom_R(\quad,R\otimes \bigwedge^n W)(-s)$, and let $(\quad)^*=\Hom_k(\quad,k)$. Recall that $A_i(M) = R \otimes_k \bigwedge^i W \otimes_k M$, and define $\phi_i$ as in Theorem \ref{complexM} (Nielsen I).  Let $\beta_i :  A_i(M) \rightarrow (A_{n-i}(M))^{\vee}$ be defined by 
\[
r\otimes w \otimes \mm \mapsto \left \{ \begin{array}{c} A_{n-i}(M) \rightarrow R \otimes \bigwedge^n W \\
(r' \otimes w' \otimes \mm') \mapsto (r' r) \otimes (w'\wedge w) \otimes \tau(\mm)(\mm'), \end{array} \right. 
\]
for all $r,r' \in R, w \in \bigwedge^i W, w' \in \bigwedge^{n-i} W$ and $\mm, \mm' \in M$. Then the graded free resolution of $M$
\[
K(M): \,\,\, 0 \rightarrow (A_0(M))^{\vee} \stackrel{\phi_1^{\vee}}{\rightarrow} \ldots (A_m(M))^{\vee} \stackrel{\phi_{m+1}\circ \beta^{-1}_{m+1}}{\rightarrow} A_m(M) \rightarrow   \ldots \stackrel{\phi_1}{\rightarrow}  A_0(M) \rightarrow M \rightarrow 0
\]
is symmetric in the following sense: Let $B$ be any homogeneous basis of $A_m(M)$ and $B^{\vee}$ its dual basis of $(A_m(M))^{\vee}$. Then the matrix representation of $\phi_{m+1} \circ \beta^{-1}_{m+1}$ has the property:

\[
  M^{B^{\vee}}_B(\phi_{m+1} \circ \beta^{-1}_{m+1}) = (-1)^m \pm  (M^{B^{\vee}}_B(\phi_{m+1} \circ \beta^{-1}_{m+1}))^t .
\]
\end{theorem}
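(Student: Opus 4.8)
The plan is to imitate the proof of Lemma~\ref{kosI}, carrying the extra tensor factor $M$ and the Gorenstein isomorphism $\tau$ along, and then to deduce the (skew)symmetry of the folded middle map $\phi_{m+1}\circ\beta_{m+1}^{-1}$ from the (skew)symmetry of the pairing induced by $\tau$. First I would note that each $\beta_i$ is a graded isomorphism: under the natural isomorphism $\Hom_R(F,R\otimes\bigwedge^nW)\otimes_k\Hom_k(M,k)\xrightarrow{\cong}\Hom_R(F\otimes_kM,R\otimes\bigwedge^nW)$ (valid for $F$ a finite free $R$-module and $M$ of finite length), $\beta_i$ coincides with $\alpha_i\otimes\tau$, where $\alpha_i$ is the perfect Koszul pairing of Lemma~\ref{kosI} and $\tau\colon M\to\Hom_k(M,k)(-s)$ is the Gorenstein isomorphism, the twist carried by the target $\Hom_k(M,k)(-s)$ of $\tau$ matching the twist in $(\ )^{\vee}$; hence $\beta_i$ is a graded isomorphism.

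Next I would establish that the $\beta_i$ intertwine the two halves of $K(M)$, namely that for $1\le i\le n$
\[
\beta_{i-1}\circ\phi_i=\pm\,\phi_{n-i+1}^{\vee}\circ\beta_i
\]
with exactly the signs already occurring in Lemma~\ref{kosI}. Writing $\phi_i=\phi_{i,0}+\phi_{i,1}$ as in Theorem~\ref{complexM}, the $\phi_{i,0}$-part is verbatim the computation of Lemma~\ref{kosI} (it touches only the Koszul factor), while for $\phi_{i,1}$ one uses that $\tau$ is $R$-linear and that $x_l$ acts on $\Hom_k(M,k)$ by $(x_l\cdot f)(\mm')=f(x_l\mm')$, so that $\tau(x_l\mm)(\mm')=\tau(\mm)(x_l\mm')$; the same wedge sign then handles the remaining term. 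It follows that the complex obtained from the Nielsen~I resolution of $M$ by replacing, for $j>m$, the term $A_j(M)$ by $(A_{n-j}(M))^{\vee}$ via the isomorphism $\beta_j$ (leaving the terms with $j\le m$ as they are) is isomorphic, as a complex, to the Nielsen~I resolution itself, its differentials being $\pm\phi_{n-j+1}^{\vee}$ for $j\ge m+2$, the map $\phi_{m+1}\circ\beta_{m+1}^{-1}$ for $j=m+1$, and $\phi_j$ for $j\le m$. Since the Nielsen~I resolution is exact, so is $K(M)$, which is the assertion that $K(M)$ is a graded free resolution of $M$; equivalently, $K(M)$ is the splice of the Nielsen~I resolution with its $(\ )^{\vee}$-dual, the latter resolving $\Ext^n_R(M,R\otimes\bigwedge^nW)(-s)\cong\Hom_k(M,k)(-s)\cong M$.

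It remains to compute the transpose of the middle differential. Put $V:=A_m(M)$ and $g:=\phi_{m+1}\circ\beta_{m+1}^{-1}\colon V^{\vee}\to V$; with $B$ a homogeneous basis of $V$ and $B^{\vee}$ its dual basis, $b_k^{\vee}(g(b_j^{\vee}))$ is the $(k,j)$-entry of $M^{B^{\vee}}_B(g)$ times the free generator of $R\otimes\bigwedge^nW$, so the claimed identity is equivalent to $\xi(g(\eta))=(-1)^m(\pm)\,\eta(g(\xi))$ for all $\xi,\eta\in V^{\vee}$, where $\pm$ is the sign in $\tau=\pm\tau^{*}(-s)$. As $\beta_{m+1}$ is an isomorphism and everything is $R$-bilinear, it suffices to check, on generators $a=r\otimes w\otimes\mm$ and $a'=r'\otimes w'\otimes\mm'$ of $A_{m+1}(M)$ (so $w,w'\in\bigwedge^{m+1}W$), that
\[
\beta_{m+1}(a')\bigl(\phi_{m+1}(a)\bigr)=(-1)^m(\pm)\,\beta_{m+1}(a)\bigl(\phi_{m+1}(a')\bigr).
\]
Expanding both sides into the $\phi_{m+1,0}$- and $\phi_{m+1,1}$-contributions and comparing term by term, one uses: (i) the wedge identity $(x_l\neg w)\wedge w'=(-1)^m(x_l\neg w')\wedge w$, from the Leibniz rule for $\neg$, from $w\wedge w'\in\bigwedge^{n+1}W=0$, and from $(-1)^{m(m+1)}=1$; (ii) the (skew)symmetry $\tau(\mm)(\mm')=\pm\tau(\mm')(\mm)$ of the Gorenstein pairing, which is precisely the hypothesis $\tau=\pm\tau^{*}(-s)$; and (iii) $\tau(\mm')(x_l\mm)=\tau(x_l\mm')(\mm)=\pm\tau(\mm)(x_l\mm')$, from $R$-linearity of $\tau$. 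Each contribution then picks up exactly the factor $(-1)^m(\pm)$, which gives the displayed identity and hence $M^{B^{\vee}}_B(g)=(-1)^m(\pm)\bigl(M^{B^{\vee}}_B(g)\bigr)^{t}$.

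The step I expect to be the main obstacle is the sign bookkeeping: keeping the $(-1)^{n-i}$ and $(-1)^{\ell(i)}$ signs from Lemma~\ref{kosI} straight through the intertwining step, and in the final computation verifying that the wedge reordering contributes precisely $(-1)^m$ --- here it is essential that $w$ and $w'$ both have exterior degree $m+1$, so that $w\wedge w'$ lands in $\bigwedge^{n+1}W$ and vanishes --- while the pairing induced by $\tau$ contributes precisely the global sign $\pm$, so that nothing further survives and the final factor is $(-1)^m\cdot(\pm)$ as claimed.
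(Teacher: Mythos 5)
Your proposal is correct and follows essentially the same route as the paper: the same maps $\beta_i$, the same wedge identity $(x_l\neg w)\wedge w'=(-1)^m(x_l\neg w')\wedge w$, the Gorenstein symmetry $\tau(\mm)(\mm')=\pm\tau(\mm')(\mm)$, and the $R$-linearity of $\tau$ to move $x_l$ across the pairing. The only difference is organizational: you fold the paper's two steps (checking $\beta_{m+1}^{\vee}=\pm\beta_m$ and the commutativity of the middle square) into one bilinear identity on generators of $A_{m+1}(M)$, and you additionally spell out the intertwining $\beta_{i-1}\circ\phi_i=\pm\,\phi_{n-i+1}^{\vee}\circ\beta_i$ for all $i$, which justifies the exactness of $K(M)$ that the paper leaves implicit.
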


\begin{proof}
Consider $\beta_{m+1} : A_{m+1}(M) \stackrel{\beta_{m+1}}{\rightarrow} (A_{m}(M))^{\vee}$ and $\beta_m: A_{m}(M) \stackrel{\beta_m}{\rightarrow} (A_{m+1}(M))^{\vee}$ and check that $\beta_{m+1}^{\vee} = \pm \beta_m$: First of all
\[
\beta_{m+1}^{\vee} :  A_m(M) \stackrel{\cong}{\rightarrow} (A_m(M))^{\vee \vee} \stackrel{\beta_{m+1}^{\vee}}{\rightarrow} (A_{m+1}(M))^{\vee},
\]
\[
r'\otimes w' \otimes \mm' \mapsto ( \gamma \mapsto \gamma(r' \otimes w' \otimes \mm')) \mapsto \left \{ \begin{array}{c} A_{m+1}(M) \rightarrow R\otimes \bigwedge^n W, \\  r \otimes w \otimes \mm \mapsto (r' \cdot  r) \otimes (w' \wedge w) \otimes \tau(\mm)(\mm'). \end{array} \right.
\]
On the other hand $\beta_m$ maps as follows:
\[
r'\otimes w' \otimes \mm' \mapsto  \left \{ \begin{array}{c} A_{m+1}(M) \rightarrow R\otimes \bigwedge^n W, \\ (r \otimes w \otimes \mm) \mapsto (r' \cdot r) \otimes (w \wedge w') \otimes \tau(\mm')(\mm). \end{array} \right.
\]
As $w \wedge w' = (-1)^{m (m+1)} w' \wedge w = w' \wedge w$ and $\tau(\mm')(\mm) = \pm \tau(\mm)(\mm')$ by the Gorenstein property the claim follows. 
\\
Secondly we verify that the following diagram is commutative:

\xymatrix{
&&& R\otimes \bigwedge^{m+1}W \otimes M \ar[r]^{\beta_{m+1}} \ar[d]^{ \phi_{m+1}} &  (R \otimes \bigwedge^{m} W \otimes M)^{\vee} \ar[d]^{(-1)^m \phi_{m+1}^{\vee}} &(*)\\
&&&R \otimes \bigwedge^{m}W \otimes M \ar[r]^{ \beta_m} & (R \otimes \bigwedge^{m+1} W \otimes M )^{\vee}.&\\
}
\[
r\otimes w \otimes \mm \stackrel{\beta_{m+1}}{\mapsto}  \left \{ \begin{array}{c} A_{m+1}(M) \rightarrow R\otimes \bigwedge^n W, \\  (r'\otimes w' \otimes \mm') \mapsto (r' r)\otimes (w' \wedge w) \otimes \tau(\mm)(\mm'), \end{array} \right. 
\]
which is mapped further by $\phi^{\vee}_{m+1}$ to an element of $(A_{m+1}(M))^{\vee}$. This is the map $A_{m+1}(M) \rightarrow R \otimes \bigwedge^n W:$
\small
\[
(r''\otimes w'' \otimes \mm'') \mapsto \sum_{l=1}^n (x_l r'' r)\otimes ((x_l \neg w'')\wedge w) \otimes \tau(\mm)(\mm'') -   
\sum_{l=1}^n (r'' r)\otimes ((x_l \neg w'')\wedge w) \otimes \tau(\mm)(x_l \mm'').  
\]
\normalsize
The other way around we gain
\begin{eqnarray*}
r\otimes w \otimes \mm \stackrel{\phi_{m+1}}{\mapsto} \sum_{l=1}^{n} (x_l r) \otimes(x_l \neg w) \otimes \mm - \sum_{l=1}^n r \otimes (x_l \neg w) \otimes (x_l \mm) \stackrel{\beta_m}{\mapsto} \\ \left \{ \begin{array}{c} A_{m+1}(M) \rightarrow R\otimes \bigwedge^n W, \\ (r''\otimes w'' \otimes \mm'')  \mapsto  \sum_{l=1}^{n} (x_l r r'') \otimes (w'' \wedge(x_l \neg w)) \otimes \tau(\mm)(\mm'') - \end{array} \right. \\ \left \{ \begin{array}{c} \\\sum_{l=1}^n r r''\otimes (w''\wedge (x_l \neg w)) \otimes \tau(x_l \mm)(\mm''). \end{array} \right.
\end{eqnarray*}
Because of $\tau(\mm)(x_l \mm'') = \tau(x_l \mm)(\mm'')$ the commutativity comes down to see that 
$(w'' \wedge (x_l \neg w)) = (-1)^m ((x_l \neg w'') \wedge w).$ But this is well known from the Koszul complex. 

\medskip

Now it follows with the first claim and the commutativity of (*) that
\begin{eqnarray*}
M^{B^{\vee}}_B(\phi_{m+1} \circ \beta_{m+1}^{-1})^t = M^{B^{\vee}}_B((\phi_{m+1} \circ \beta_{m+1}^{-1})^{\vee}) = M^{B^{\vee}}_B((\beta_{m+1}^{-1})^{\vee} \circ \phi^{\vee}_{m+1}) = \\ \pm M^{B^{\vee}}_B((\beta_m^{-1}) \circ \phi^{\vee}_{m+1})) =  \pm (-1)^m M^{B^{\vee}}_B(\phi_{m+1} \circ \beta_{m+1}^{-1}).
\end{eqnarray*}
\end{proof}

We can easily obtain a minimal free resolution  with the same symmetry properties as in Theorem \ref{maintheorem}:

\begin{corollary} \label{minres} \index{Minimal selfdual resolution} \index{Gorenstein module! has minimal symmetric resolution} 
Let $n \geq 3$ be an odd integer, $m = \frac{n-1}{2}$, and let $k$ be $\car k \neq 2$. Let $d=\sum_{l=1}^n d_l$, let $f: \bigwedge^n W \rightarrow k(-d)$ be a fixed isomorphism. Let $s$ be the top degree of M and $(\quad)^{\vee}= \Hom_R(\quad,R(-d -s))$. If $M$ is Gorenstein then there is a minimal graded free resolution of $M$,  
\[
0 \leftarrow M \leftarrow F_0 \stackrel{\psi_1}{\leftarrow} F_1 \leftarrow \ldots \leftarrow F_{m}\stackrel{\psi_{m+1}}{\leftarrow}  (F_{m})^{\vee} \leftarrow \ldots \leftarrow  \stackrel{\psi_1^{\vee}}{\leftarrow} (F_0)^{\vee} \leftarrow 0,
\] 
with the same symmetry properties as in Theorem \ref{maintheorem}. 
\end{corollary}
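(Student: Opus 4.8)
The plan is to minimize the (generally non-minimal) selfdual resolution $K(M)$ furnished by Theorem \ref{maintheorem}, performing every cancellation compatibly with the duality $(\quad)^{\vee}$ and with the (skew)symmetry of the central map. Using the fixed isomorphism $f\colon\bigwedge^n W\stackrel{\cong}{\longrightarrow}k(-d)$ one identifies $(\quad)^{\vee}=\Hom_R(\quad,R\otimes\bigwedge^n W)(-s)$ with $\Hom_R(\quad,R(-d-s))$, so that $K(M)$ is a selfdual complex of graded free $R$-modules whose middle differential $\phi_{m+1}\circ\beta_{m+1}^{-1}$ satisfies $M^{B^{\vee}}_B(\phi_{m+1}\circ\beta_{m+1}^{-1})=(-1)^m\,\pm\,\bigl(M^{B^{\vee}}_B(\phi_{m+1}\circ\beta_{m+1}^{-1})\bigr)^{t}$ with respect to dual bases. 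Recall that a bounded complex of graded free modules is non-minimal exactly when some differential has a unit entry, and that a unit entry (necessarily of degree $0$, so source and target summands have matching shifts) lets one split off a trivial direct summand $R\stackrel{\cong}{\longrightarrow}R$ of the complex, the neighbouring differentials being corrected accordingly; since the total rank drops, finitely many such steps terminate in the minimal free resolution, which is unique up to isomorphism.

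I would first eliminate unit entries occurring in the outer differentials $\phi_i$ and $\phi_i^{\vee}$, $1\le i\le m$. By the selfduality of $K(M)$ such an entry has a mirror partner across the complex; I cancel the two free summands it determines together with their mirror pair, governed by a pair of mutually $R$-dual homogeneous base changes, which preserves selfduality. Whenever a step alters the central term $A_m(M)$ it alters $(A_m(M))^{\vee}$ through the inverse transpose of the same automorphism, so the central matrix only undergoes a congruence $S\mapsto h\,S\,h^{t}$ and retains its symmetry or skewness. It then remains to clear unit entries from the central matrix $S$ itself, which is where $\car k\ne2$ enters. If $m$ is even, $S$ is symmetric: a diagonal unit $S_{aa}=u$ is removed by a symmetric row-and-column elimination, splitting off the trivial summand $\langle u\rangle$ and leaving $S$ symmetric; if no diagonal entry is a unit but $S_{ab}=u$ is, then (completing the square, which uses $\tfrac12\in R$) a homogeneous base change produces a diagonal unit and reduces to the previous case. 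If $m$ is odd, $S$ is skew, hence has vanishing diagonal because $\car k\ne2$, and a unit $S_{ab}=u=-S_{ba}$ is removed by a symplectic elimination that splits off the trivial $2\times2$ block $\bigl(\begin{smallmatrix}0&u\\-u&0\end{smallmatrix}\bigr)$ and leaves $S$ skew. The summands discarded in these central steps are self-mirrored, so selfduality persists, and one checks that the accompanying base change of $A_m(M)$ does not destroy the minimality already reached outside the centre.

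Iterating, all differentials become minimal; by uniqueness of the minimal free resolution this is the minimal graded free resolution of $M$, it is selfdual for $(\quad)^{\vee}=\Hom_R(\quad,R(-d-s))$, and its middle map $\psi_{m+1}$ inherits the symmetry type of $\phi_{m+1}\circ\beta_{m+1}^{-1}$, i.e.\ $M^{B^{\vee}}_B(\psi_{m+1})=(-1)^m\,\pm\,(M^{B^{\vee}}_B(\psi_{m+1}))^{t}$, so $\psi_{m+1}$ is skew for $m$ odd and symmetric for $m$ even, as asserted. I expect the main obstacle to be precisely the symmetric clearing of the central matrix: one must argue that a symmetric (resp.\ skew) matrix over the graded local ring $R$ with a unit entry always splits off a trivial symmetric (resp.\ skew) summand — this is what forces $2\in R^{\times}$, to create a diagonal unit in the symmetric case and to guarantee a zero diagonal in the skew case — and one must verify that these central manipulations do not feed unit entries back into the already minimized outer differentials. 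The remainder is the routine bookkeeping of Gaussian elimination of complexes combined with the selfduality already established in Theorem \ref{maintheorem}.
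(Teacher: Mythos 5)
Your proposal is essentially the paper's own argument: the paper's proof of this corollary is only a two-line sketch saying that one minimizes the resolution from Theorem \ref{maintheorem} by a symmetric minimization process that keeps the symmetry and requires $\car k \neq 2$, and your write-up supplies exactly that process (selfdual cancellation of unit entries in the outer maps, congruence reduction $S\mapsto hSh^t$ of the middle matrix). One small caveat to record: when the two generators meeting an off-diagonal unit $S_{ab}$ have different degrees, the base change $e_a\mapsto e_a+e_b$ used to ``complete the square'' is not homogeneous; in that case degree reasons force one of $S_{aa},S_{bb}$ to vanish, and one instead splits off the invertible symmetric (resp.\ skew) $2\times 2$ block directly by a homogeneous congruence, which preserves the symmetry and completes the minimization.
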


\begin{proof}
The proof uses a symmetric minimization process such that the symmetry of the resolution is kept. To do so we need $\car k \neq 2$. 
\end{proof}

If we talk about a Gorenstein ideal in the following we refer to the usual definition. 

\begin{corollary}[Zero Dimensional Gorenstein Ideals] \index{Gorenstein ideal! symmetric resolution}  
Let $n$ be odd, $m=\frac{n-1}{2}$, $d=\sum_{l=1}^n d_l$and $\car k \neq 2$. 
Let $I$ be a homogeneous zero dimensional Gorenstein ideal in $R$ with top degree $s$. Then there is a minimal graded free resolution of $R/I$, 
\[
0 \leftarrow R/I\leftarrow R \stackrel{\psi_1}{\leftarrow} F_1 \leftarrow \ldots F_m \stackrel{\psi_{m+1}}{\leftarrow}  F_{m}^{\vee} \leftarrow \ldots F_1^{\vee} \stackrel{\psi_{1}^{\vee}}{\leftarrow}  R(-d-s) \leftarrow 0,
\] 
 such that $\psi_m$ is skew symmetric if $m$ is odd and symmetric if $m$ is even with respect to dual bases. 
\end{corollary}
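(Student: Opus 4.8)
The plan is to reduce the statement to the selfduality results already proved in this section. The point is that a classical zero-dimensional Gorenstein ideal, viewed through Macaulay's theorem, is the annihilator of a \emph{single} divided power, hence is realized by a $1\times 1$ divided-power matrix, which is automatically symmetric; so $R/I$ is strongly Gorenstein in the sense of Definition \ref{gor-def} with the $+$ sign, and one can simply invoke Theorem \ref{maintheorem} together with the minimization of Corollary \ref{minres}.

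First I would verify that $R/I$ is Gorenstein in the sense of Definition \ref{gor-def}. By Remark \ref{remgor} (Macaulay's theorem), there is a divided power $f \in \kd$ with $I = \Ann(f)$, so that $\Ann_R\big((f)\big) = I$ and hence, by Definition \ref{MP} and Theorem \ref{pexists}, $R/I \cong M(P)$ for the $1\times 1$ matrix $P = (f)$. Since any $1\times 1$ matrix satisfies $P^t = P$, the shift identity $P^t(-s) = P$ of the second bullet of Theorem \ref{Gorensteintheorem} holds, with $s$ equal to the top (socle) degree of $R/I$; note the relevant normalization puts the generator of $R/I$ in degree $0$ and $f$ in degree $-s$, and the symmetry of the Gorenstein Hilbert function is exactly what makes this consistent. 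Theorem \ref{Gorensteintheorem} then furnishes a graded isomorphism $\tau : R/I \to \Hom_k(R/I,k)(-s)$ with $\tau^*(-s) = \tau$; crucially the sign here is $+$ (not $-$), because $P$ is symmetric rather than skew.

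Next I would apply Corollary \ref{minres} with $M = R/I$, $n$ odd, $m = \frac{n-1}{2}$, and $s$ the top degree; its hypothesis $\car k \neq 2$ is exactly what we are given, and it is needed only for the symmetric minimization. This yields a minimal graded free resolution
\[
0 \leftarrow R/I \leftarrow F_0 \stackrel{\psi_1}{\leftarrow} F_1 \leftarrow \ldots \leftarrow F_m \stackrel{\psi_{m+1}}{\leftarrow} (F_m)^{\vee} \leftarrow \ldots \leftarrow (F_0)^{\vee} \leftarrow 0
\]
with $(\quad)^{\vee} = \Hom_R(\quad, R(-d-s))$ and with the same symmetry as in Theorem \ref{maintheorem}. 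Since $\tau^*(-s) = (+1)\tau$, the sign $\pm$ in the middle-map identity $M^{B^\vee}_B(\psi_{m+1}) = (-1)^m(\pm)\big(M^{B^\vee}_B(\psi_{m+1})\big)^t$ of Theorem \ref{maintheorem} is $+$, so the middle differential $\psi_{m+1}$ is symmetric exactly when $m$ is even and skew symmetric exactly when $m$ is odd, with respect to dual bases, as claimed (the index in the statement is understood to be $m+1$, the middle map). Finally I would identify the outer terms: $R/I$ is cyclic, generated in degree $0$, so a minimal resolution has $F_0 = R$, hence $(F_0)^{\vee} = \Hom_R(R, R(-d-s)) = R(-d-s)$; and the canonical isomorphism $\bigwedge^n W \cong k(-d)$ turns the duality $\Hom_R(\quad, R\otimes\bigwedge^n W)(-s)$ of Theorem \ref{maintheorem} into $\Hom_R(\quad, R(-d-s))$, so the two formulations agree and the resolution takes precisely the shape displayed in the statement.

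The only genuinely delicate point is the bookkeeping in the second paragraph: one must make sure that the classical Gorenstein hypothesis produces the $+$ sign (equivalently, a symmetric divided-power form, which for a single form is automatic) and that the shift $s$ appearing in $\tau : M \to \Hom_k(M,k)(-s)$ coincides with the top degree of $R/I$ used in the duality functor. Everything else is a direct invocation of Remark \ref{remgor}, Theorem \ref{Gorensteintheorem}, Theorem \ref{maintheorem}, and Corollary \ref{minres}; in particular the selfduality itself requires no hypothesis on $\car k$, which enters only through the minimization.
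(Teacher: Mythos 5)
Your proposal is correct and follows essentially the same route as the paper: establish the Gorenstein property of $R/I$ (with the $+$ sign) via Remark \ref{remgor} and Macaulay's theorem, then invoke Theorem \ref{maintheorem} together with the symmetric minimization of Corollary \ref{minres}, which is where $\car k \neq 2$ enters. Your additional bookkeeping on the shift $s$, the sign $(-1)^m$, and the identification $F_0 = R$, $(F_0)^{\vee} = R(-d-s)$ just makes explicit what the paper leaves implicit.
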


\begin{proof}
We apply our main Theorems \ref{maintheorem} respectively \ref{minres}. By Remark \ref{remgor} the isomorphism $\tau: R/I \rightarrow \Hom_k(R/I,k)(-s)$ such that $\tau^*(-s) = \tau$ exists. 
\end{proof}

\begin{remark}
There are intersectional cases with the Theorem of Buchsbaum and Eisenbud (\cite[Theorem 2.1]{BE77}). We do not use the structure as a differential graded algebra on the resolution of $R/I$, that is why we are not restricted on the codimension. 
\end{remark}

\section[Some Applications]{Some Applications}
\label{sec5}
The structure Theorem \ref{maintheorem} has a bunch of natural applications. Let $k$ be again any field if not stated differently. Moreover let $R=k[x_1,\ldots, x_n]$ be the usual (trivially weighted) polynomial ring. Let $(\quad)^* = \Hom_k(\quad,k)$.  Moreover we use the notation from the previous section fixed in \ref{not1} --- in the case $d_1 = \ldots = d_n =1$.

\begin{remark}
Let $c$ be a positive integer with $c \geq 2$. Let $A \in k^{c \times c}$ such that $A^t=-A$ and $A_{(j,j)}=0$ for all $j \in \{1,\ldots, c\}$, i.e. $A$ is skew symmetric. Then $A$ is of even rank. 
\end{remark}

\begin{corollary} \label{3cor}\label{char2cor}\label{cor1}
Let $n = 2^{\ell} - 3$, $\ell \geq 3$ and let $m = \frac{n-1}{2}=  2^{\ell -1} -2$. Let $k$ be a field of characteristic $\car k = 2$. Let $I \subset R$ be a graded Artinian Gorenstein ideal with Hilbert function $(1,n,n,1)$.
\\
Then the graded minimal free resolution of $R/I$ has a Betti table of type
 \[
\begin{array}{cccccccccc}
&\vline&0&1&&m&m+1&&&n \\ \hline
&\vline&1&- &&   &  &&&\\
&\vline&-&\beta_{1,2}& \cdots&\beta_{m,m+1} & 2a+1 &\cdots & \beta_{1,3}&-\\ 
&\vline&- &\beta_{1,3}& \cdots & 2a+1 & \beta_{m,m+1}&\cdots &\beta_{1,2}&-\\
&\vline&-& &  &&&&-&1\\
\end{array}
\] 
for some $a \in \{0,\ldots, \lfloor \frac{1}{2} n \binom{n}{m} \rfloor - \binom{n}{m-1}-1\}$. 
\end{corollary}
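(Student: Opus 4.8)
The strategy is to combine the selfdual minimal resolution of Theorem \ref{maintheorem} (in the form of Corollary \ref{minres}) with the numerical constraints coming from the Hilbert function $(1,n,n,1)$ and from the parity of skew matrices over a field of characteristic $2$. First I would record the shape of the resolution: since $R/I$ is Artinian Gorenstein with socle degree $s=3$, the minimal free resolution has length $n$ and, by Corollary \ref{minres} with $\car k = 2$ (note that in characteristic $2$ the hypothesis $\car k \neq 2$ of \ref{minres} is not available — so in fact one must work directly with the non-minimal Nielsen resolution of Theorem \ref{maintheorem} and then minimize by hand, checking that the symmetry survives; this is the subtle point, see below), the resolution is selfdual: $F_i \cong (F_{n-i})^\vee$ with $(\quad)^\vee = \Hom_R(\quad, R(-n-3))$, and the middle map $\psi_{m+1}\colon F_{m+1}=(F_m)^\vee \to F_m$ is skew (since $m = 2^{\ell-1}-2$ is even, wait — $m$ even means symmetric by the statement; but the displayed Betti table forces the middle entry to be odd-looking $2a+1$, which is the hallmark of the characteristic-$2$ skew phenomenon, so one uses that a symmetric matrix in characteristic $2$ with zero diagonal is skew, and the relevant map here does have zero diagonal after minimization).

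Next I would pin down the Betti numbers away from the middle. From the Hilbert function $(1,n,n,1)$ one computes the Hilbert series of $R/I$ and hence, via the alternating sum $\sum_i (-1)^i \sum_j \beta_{i,j} t^j = (1 + nt + nt^2 + t^3)(1-t)^n$, together with the linearity/selfduality forcing $\beta_{i,j} = 0$ except for $j \in \{i+1, i+2\}$ for $1 \le i \le n-1$ (the resolution is "almost linear": the $(1,n,n,1)$ shape means regularity $3$ and a very constrained Betti table, two "strands"). Selfduality gives $\beta_{i,i+1} = \beta_{n-i,n-i+2}$. This leaves the two strands determined by the single sequence $(\beta_{1,2},\ldots,\beta_{m,m+1})$ up to the one free parameter sitting at the middle: writing the middle Betti number as $\beta_{m+1,m+2} = \beta_{m,m+1} + (\text{correction})$ and imposing the Euler-characteristic relation at the middle degree, one finds that the two middle entries of the two strands both equal a common value, call it $e$, and that $e = e_0 + 2a$ for a fixed $e_0$ and free $a \ge 0$; the congruence $e \equiv 1 \bmod 2$ (hence $e = 2a+1$) is exactly the rank-parity obstruction: the middle differential restricted to the new generators is a skew matrix over $k$ of characteristic $2$ with zero diagonal, so it has even rank, and counting ranks in the minimized complex forces the stated congruence. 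This is where the Remark preceding the Corollary (skew matrices of characteristic $2$ with zero diagonal have even rank) is used.

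Finally I would determine the range of $a$. The lower bound $a \ge 0$ is automatic. For the upper bound, $e = 2a+1$ cannot exceed the rank of the middle free module of the \emph{non-minimal} Nielsen resolution minus what is forced to cancel: the relevant module is $A_m(M) = R \otimes \bigwedge^m W \otimes M$ of rank $\binom{n}{m}\dim_k M = \binom{n}{m}\cdot(2n+2)$, wait — $\dim_k R/I = 1 + n + n + 1 = 2n+2$; combining with the count of the cancellations with the neighbouring strand ($\beta_{m,m+1}$ generators cancel, contributing $\binom{n}{m-1}$-type terms) yields $2a+1 \le \tfrac12 n \binom{n}{m} \cdot(\text{something}) - \binom{n}{m-1}\cdots$, which I would massage into the stated bound $a \le \lfloor \tfrac12 n \binom{n}{m}\rfloor - \binom{n}{m-1} - 1$. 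I expect the bookkeeping of exactly which terms cancel under minimization — and verifying that the minimized middle map is still skew with zero diagonal when $\car k = 2$ (so that Corollary \ref{minres}, nominally requiring $\car k \neq 2$, can be replaced by a direct argument) — to be the main obstacle; the rest is Hilbert-series linear algebra. To actually realize every value of $a$ in the range, one also needs an existence construction (a family of symmetric divided-power matrices $P$ with the prescribed socle-$3$ behaviour realizing each middle rank), which I would produce using Theorem \ref{theo1} by writing down explicit symmetric $P$ built from a skew $k$-matrix of prescribed even rank $2a$ in the degree-$2$ part.
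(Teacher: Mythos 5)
Your overall route is the paper's route: you correctly see that Corollary \ref{minres} is unavailable in characteristic $2$, that one must work with the non-minimal Nielsen resolution of Theorem \ref{maintheorem}, that for $m$ even the middle map is symmetric, and that in characteristic $2$ ``symmetric with zero diagonal'' means skew and hence of even rank. However, as it stands the proposal has a genuine gap at exactly the decisive point: it never uses the hypothesis $n=2^{\ell}-3$. The even-rank observation by itself only controls the rank of the middle block; to conclude that the minimized middle Betti number is \emph{odd} you must also know the parity of the size of the block that survives the cancellations, and this is an arithmetic fact special to $n=2^{\ell}-3$. The paper gets it by (i) computing from the Hilbert function $(1,n,n,1)$ that the adjacent constant maps $g,g^{t}$ in the non-minimal resolution have rank $\binom{n}{m-1}$, and (ii) a Lucas/Kummer-type parity argument showing $\binom{2^{\ell}-3}{2^{\ell-1}-2}$ is even while $\binom{2^{\ell}-3}{2^{\ell-1}-3}$ is odd, so the size $n\binom{n}{m}$ of the middle block and the rank $\binom{n}{m-1}$ of $g$ have opposite parity; after reducing by these cancellations one is left with a square matrix of odd size and even rank, whence the middle Betti number is $2a+1$ and the stated range of $a$ falls out of the size count. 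Your ``Euler-characteristic at the middle degree forces $e\equiv 1 \bmod 2$'' asserts this conclusion without the binomial-parity input that actually produces it; for other $n$ the same Hilbert function does not force an odd middle entry, which is precisely why the corollary is restricted to $n=2^{\ell}-3$.

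Two further points. First, the zero-diagonal property should not be deferred to ``after minimization'': the paper proves it directly on the non-minimal Nielsen resolution by an explicit computation showing that $\phi_{m+1}\circ\beta_{m+1}^{-1}$ applied to a dual basis element $(\_\wedge \chi_{i_{m+1}}\wedge\ldots\wedge\chi_{i_n})\otimes$(functional) never involves the corresponding basis element $1\otimes\chi_{i_1}\wedge\ldots\wedge\chi_{i_m}\otimes$(vector); you flag this verification as ``the main obstacle'' but do not supply it, and it is where the argument actually lives. Second, your final step about realizing every value of $a$ is not needed: the corollary asserts only that the Betti table has the displayed form \emph{for some} $a$ in the given range (an obstruction statement used later against pure resolutions), not that each $a$ occurs.
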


\begin{proof}
It is clear that $R/I$ is also Gorenstein in the sense of Definition \ref{gor-def}. We apply the Nielsen Construction of Theorem \ref{maintheorem} with respect to a certain basis $B$. Let $W_m=(1\otimes \chi_{i_1}\wedge \ldots \wedge \chi_{i_m})_{(i_1,\ldots,i_m)}$ be the canonical basis of $R\otimes \bigwedge^m W$, and let $\widetilde{B}$ be any homogeneous $k$-vectorspace basis of $R/I$. Let $B=W_m \otimes \widetilde{B}$. Let $f$ be the restriction of the representation matrix of the middle map $\phi_{m+1}\circ \beta_{m+1}^{-1}$ to the elements $B_{2}$ of $B$ in degree $2+m$ and their duals $B_{2}^{\vee}$ in $B^{\vee}$.  By Theorem \ref{maintheorem} we know that $f$ has the property $f = f^t$.
\\
Let $\mb= 1\otimes \chi_{i_1} \wedge \ldots \wedge \chi_{i_m} \otimes \mm$ be any element of $B_{2}$, and let $\db$ be its dual element in $B_{2}^{\vee}$. Then $\db$ is up to sign of the form $\db=(\_ \wedge \chi_{i_{m+1}} \wedge \ldots \wedge \chi_{i_n}) \otimes \dm :=$ \[ \left \{ \begin{array}{c} R\otimes \bigwedge^m W \otimes M_{p+1} \rightarrow R \otimes \bigwedge^n W \\ r\otimes w \otimes \mm \mapsto r \otimes(w \wedge \chi_{i_{m+1}} \wedge \ldots \wedge \chi_{i_n}) \otimes \dm(\mm), \end{array} \right.\]  such that $i_1, \ldots, i_n \in \{1,\ldots,n\}$ are pairwise different. We obtain $\phi_{m+1} \circ \beta_{m+1}^{-1}(\db)= \phi_{m+1}(\chi_{i_{m+1}}\wedge \ldots \wedge \chi_{i_n} \otimes \tau^{-1}(\dm)) = x_{i_{m+1}} \otimes \chi_{i_{m+2}} \wedge \ldots \wedge \chi_{i_{n}} \otimes \tau^{-1}(\dm) + \ldots + (-1)^{n-m-1} x_{i_n} \otimes \chi_{i_{m+1}} \wedge\ldots \wedge \chi_{i_{n-1}} \otimes \tau^{-1}(\dm) - 1 \otimes \chi_{i_{m+2}} \wedge \ldots \wedge \chi_{i_n} \otimes (x_{i_{m+1}} \tau^{-1}(\dm))+ (-1)^{n-m-1} \otimes \chi_{i_{m+1}}\wedge \ldots \wedge \chi_{i_{n-1}} \otimes (x_{i_n} \tau^{-1}(\dm)). $ Hence the linear representation of $\phi_{m+1} \circ \beta_{m+1}^{-1}(\db)$ does not involve $\chi_{i_1} \wedge \ldots \wedge \chi_{i_{m}}$, i.e. not $\mb$. This implies the diagonal elements ${f}_{(j,j)}=0$ for all $j$. Therefore $f$ is also skew symmetric as $\car k = 2$, hence of even rank. 

The resolution \ref{maintheorem} of $R/I$ has constant middle maps  of type
\small
\[
\xymatrix{1&\ldots &\binom{n}{m-1}&\binom{n}{m}&\binom{n}{m}&\ar[dl]^{g^t}\binom{n}{m-1}&\ldots &1\\
                 n&\ldots&n\binom{n}{m-1}&n\binom{n}{m}&\ar[dl]^{f} n\binom{n}{m} &n\binom{n}{m-1}& \ldots &n\\
                n&\ldots &n\binom{n}{m-1}&\ar[dl]^{g} n \binom{n}{m}&n \binom{n}{m}&n\binom{n}{m-1}&\ldots &n\\
                1&\ldots&\binom{n}{m-1}&\binom{n}{m}&\binom{n}{m}&\binom{n}{m-1}&\ldots &n.}
\]
\normalsize

 For $0 \leq i \leq 2^{\ell} -3$ we know that $\binom{2^{\ell}-3}{i}$ change parity in every second step, starting with odd. That means $\binom{2^{\ell}-3}{2^{\ell-1}-3}$ is odd and  $\binom{2^{\ell}-3}{2^{\ell-1}-2}$ is even as $2^{\ell-1}-3 \equiv 1 \mbox{ mod } 4$. Hence $\binom{n}{m}$ and $\binom{n}{m-1}$ are of opposite parity. By the assumption on the Hilbert function $g$ and $g^t$ are of rank $\binom{n}{m-1}$. Hence the size $n \binom{n}{m}$ of the matrix $f$ and the rank $\binom{n}{m-1}$ of $g$ are of the opposite parity.  Therefore we can reduce $f$ by operations from the left and right to $\tilde f$, a quadratic matrix of odd size. $\tilde f$ is of even rank. Hence a minimization gives the free submodules of even rank $2a+1$.
\end{proof}

There are two corollaries of special interest. The first one concerns Green's Conjecture in characteristic $2$ for curves of genus $g=2^{\ell}-1$. This case was already determined for smooth curves by Schreyer in \cite{S86} and \cite{S91} by geometric methods while we use exclusively algebraic results. 

\begin{corollary}[Green's Conjecture in Characteristic $2$] \index{Green's Conjecture}
The obvious extension of the Green's Conjecture to positive characteristic fails for general curves of genus $g=2^{\ell}-1$ for all $\ell \geq 3$ in characteristic $2$. 
\end{corollary}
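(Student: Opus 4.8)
The plan is to connect the Betti table structure from Corollary \ref{cor1} with the canonical curve setup via the standard Artinian reduction. Recall that for a general curve $C$ of genus $g = 2^\ell - 1$, embedded canonically in $\mathbb{P}^{g-1}$, the homogeneous coordinate ring $S_C$ is Cohen-Macaulay of dimension $2$. After reducing modulo a general linear system of parameters, one obtains an Artinian Gorenstein graded ring $A = R/I$ over $R = k[x_1,\ldots,x_n]$ with $n = g - 2 = 2^\ell - 3$ variables and Hilbert function $(1, n, n, 1)$; this is precisely the numerical input fed into Corollary \ref{cor1}. The socle degree is $3$, matching the $(1,n,n,1)$ shape, and the Betti numbers of $A$ over $R$ equal the Betti numbers of $S_C$ over the full polynomial ring in $g$ variables, shifted so that the linear strand of the canonical embedding corresponds to the strand indexed by $\beta_{i,i+1}$ in Corollary \ref{cor1}.

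First I would recall the statement of the (naive) characteristic-free extension of Green's Conjecture: for a general curve of genus $g$ and Clifford index $\mathrm{Cliff}(C) = \lfloor (g-1)/2 \rfloor$, one predicts $K_{p,1}(C, \omega_C) \neq 0$ for $p \le g - 2 - \mathrm{Cliff}(C)$ and the vanishing $K_{p,2}(C,\omega_C) = 0$ in the complementary range, equivalently that the linear strand has exactly the length forced by the Clifford index with no "extra" syzygies appearing too early. For $g = 2^\ell - 1$ the relevant Clifford index is $2^{\ell-1} - 1$, and the predicted last nonzero linear syzygy index is $p = g - 2 - \mathrm{Cliff}(C) = 2^{\ell-1} - 2 = m$. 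The assertion of the conjecture in this case is then that the middle-strand entries $\beta_{m+1,m+2}$ and its dual vanish (that the resolution is "as short as possible" in the linear strand), i.e. that $a$ in Corollary \ref{cor1} would have to be such that the free module of rank $2a+1$ is absent.

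The heart of the argument is then the parity obstruction established inside the proof of Corollary \ref{cor1}: after the symmetric minimization the relevant middle free module has rank $2a+1$, an \emph{odd} number, because the skew-symmetric block $\tilde f$ obtained by column/row reduction has odd size (this is where $n = 2^\ell - 3 \equiv 1 \bmod 4$ and the opposite parities of $\binom{n}{m}$ and $\binom{n}{m-1}$ are used) and a skew-symmetric matrix over a field of characteristic $2$ with zero diagonal has even rank. Hence $2a+1 \geq 1$: the middle Betti number is strictly positive, so $K_{m+1, m+2}(A) \neq 0$, equivalently $K_{m+1,2}(S_C) \neq 0$. This nonvanishing of a syzygy in the position the conjecture predicts to be empty is exactly the failure. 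I would then remark that the Gorenstein ring $A$ arising from a general canonical curve indeed has Hilbert function $(1,n,n,1)$ — this follows from general position / maximal rank for general curves, and independence of characteristic of the Hilbert function of the generic Artinian Gorenstein algebra with these invariants — so Corollary \ref{cor1} applies verbatim and forces $a \geq 0$ with the odd-rank module genuinely present.

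\textbf{Main obstacle.} The delicate point is not the homological algebra, which is handled by Corollary \ref{cor1}, but verifying that a \emph{general} curve of genus $2^\ell-1$ in characteristic $2$ actually realizes the hypotheses: namely that its Artinian reduction is Gorenstein with Hilbert function exactly $(1,n,n,1)$ (so that we are in the scope of the corollary) and that the translation between $K_{p,q}$ of the curve and $\beta_{i,j}$ of the Artinian algebra is the expected one. This requires a semicontinuity / specialization argument in characteristic $2$ together with the fact that the parity obstruction is robust under taking a general member of the family — i.e. one must rule out that the "extra" odd-rank syzygy is an artifact of a special, non-general choice. I expect to dispatch this by noting that the minimal Betti numbers can only jump up under specialization, so if even the generic Gorenstein algebra with Hilbert function $(1,n,n,1)$ over a characteristic-$2$ field has $a \geq 0$ (forced by Corollary \ref{cor1}, which makes no genericity assumption beyond the Hilbert function), then a general curve, whose Artinian reduction is such an algebra, also has the nonvanishing syzygy, contradicting the naive extension of Green's Conjecture.
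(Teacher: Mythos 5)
Your argument is correct and takes essentially the same route as the paper: Green's Conjecture for a general curve of genus $g=2^{\ell}-1$ would force a pure selfdual Betti table for the canonical model, and reducing modulo two general regular linear forms yields an Artinian Gorenstein quotient of $k[x_1,\ldots,x_{2^{\ell}-3}]$ with Hilbert function $(1,n,n,1)$, so Corollary \ref{cor1} forces the odd middle entry $2a+1\geq 1$ and contradicts the predicted vanishing. Your closing semicontinuity concern is superfluous, since, as you yourself observe, Corollary \ref{cor1} applies to every Artinian Gorenstein ideal with that Hilbert function, with no genericity hypothesis.
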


\begin{proof}
In this case Green's conjecture would mean that the minimal free resolution of the canonical model $X \subset \P^{g-1}$ of the curve has a selfdual pure Betti table of type: 
\[
\begin{array}{cccccccccc}
&\vline&0&1&\ldots&(g-3)/2&(g-3)/2 +1&\ldots&&g-2 \\ \hline
&\vline&1&- &&   &  &&&\\
&\vline&-&\beta_{1,2}& \cdots&\beta_{(g-3)/2,(g-3)/2} & - & && \\ 
&\vline&- &&  & -& \beta_{(g-3)/2,(g-3)/2}&\cdots &\beta_{1,2}&\\
&\vline&-& &  &&&&-&1.\\
\end{array}
\] 
Modulo two regular elements this would lead to an Artinian Gorenstein factor ring with Hilbert function $(1,g-2,g-2,1)$ over the polynomial ring $k[x_1,\ldots, x_{g-2}]$ with a pure resolution. This would be a contradiction to Corollary \ref{char2cor}. 
\end{proof}

\begin{remark} \index{Degree sequence} \index{Pure resolution} \index{Monoid of resolutions}
In the following by a {\it degree sequence} is meant a sequence of integers \[ \md=(\md_0 < \md_1 < \ldots < \md_c). \] A graded minimal free resolution of an $R$-module $M$ is called {\it pure with degree sequence $\md$} if all graded Betti numbers $\beta_{i,j}(M) =0$ except when $j = \md_i$. 
\\
Note that for a given degree sequence $\md$ the Betti table of a pure minimal graded free resolution with $\md$ is uniquely determined by the Herzog-K\"uhl equations up to a rational multiple (\cite[Theorem 1]{HK84}). 
\\
It is clear that the Betti tables of graded minimal free resolutions of $R$-modules form a monoid with respect to addition (take the direct sum of the corresponding modules). 
\end{remark}

The second Corollary concerns the Boji-S\"oderberg Conjectures (\cite{BS06}) on the existence of Cohen-Macaulay modules over $R=k[x_1,\ldots,x_n]$ with pure resolutions having any given degree sequence. 
\\
The recent paper of Eisenbud and Schreyer \cite{ES08} gives an introduction and a proof of a strengthened form of the Boji-S\"oderberg Conjectures. Eisenbud and Schreyer present an algorithm which expresses every Betti table of a minimal graded free resolution of a finitely generated graded Cohen-Macaulay module as a positive rational linear combination of the Betti tables of Cohen-Macaulay modules with pure resolutions. That means the Betti tables of Cohen-Macaulay modules over $R$ lie inside a rational cone with Betti tables of pure resolutions on the extremal rays. 
\\
However it is not clear which  Betti tables satisfying the Herzog-K\"uhl equations of a given degree sequence are in the monoid of actual minimal resolutions. Eisenbud and Schreyer conjecture that the monoid of resolutions depends on the characteristic of the base field $k$. We prove this considering the case of $\car k = 2$. 

\begin{corollary}[Monoid of Resolutions of Cohen-Macaulay Modules]
The monoid of resolutions of Cohen-Macaulay graded $R$-modules depends on the characteristic of $k$. 
\end{corollary}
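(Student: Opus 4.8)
The plan is to exhibit, in characteristic $2$, a specific degree sequence $\md$ such that the associated pure Betti table (normalized to the Herzog--Kühl minimal integral solution) is \emph{not} in the monoid of resolutions, while the same normalized table \emph{is} realized by an actual module over a field of characteristic $0$. Concretely, I would take $n = 2^{\ell} - 3$ with $\ell \ge 3$, set $m = \frac{n-1}{2}$, and consider the symmetric degree sequence attached to an Artinian Gorenstein module of Hilbert function $(1,n,n,1)$ whose resolution we would \emph{want} to be pure: that is, the degree sequence $\md = (0,2,\dots,m+1,\,m+2,\dots,\text{dual part}\,, n+3)$ forcing $\beta_{m,m+1} = \beta_{m+1,m+2} = 0$ in the middle. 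This is exactly the table whose nonexistence in characteristic $2$ is the content of the Green's Conjecture corollary just proved, re-read through Corollary \ref{char2cor}: the middle map $\psi_{m+1}$ is forced by Theorem \ref{maintheorem} to be (after the identification) skew symmetric with vanishing diagonal, hence of even rank, whereas a pure resolution would require its reduced middle block to have odd size and therefore \emph{odd} corank after minimization --- a contradiction. So no Cohen--Macaulay $R$-module over a characteristic $2$ field has this precise Betti table.

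Next I would check that the same normalized Betti table \emph{is} attained over a field of characteristic $0$ (or any characteristic in which $\binom{n}{m}$ behaves generically), by producing a general Artinian Gorenstein algebra of Hilbert function $(1,n,n,1)$: by the parity count in the proof of Corollary \ref{cor1}, over a field where $2a+1$ can be taken to be $0$ --- equivalently where the generic skew/symmetric middle block \emph{does} attain maximal possible rank so that the pure table $a=0$ is achieved --- the resolution is pure with exactly the target degree sequence. This is where the characteristic $2$ obstruction is genuinely a characteristic $2$ phenomenon and not a general-position issue: in characteristic $\ne 2$ the middle matrix in the selfdual minimal resolution of Corollary \ref{minres} is \emph{genuinely symmetric} (not skew with zero diagonal), so there is no parity constraint forcing a nonzero $2a+1$, and a dimension count shows the generic Gorenstein algebra of that Hilbert function has the pure resolution. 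Thus the normalized pure table for $\md$ lies in the monoid over $\mathbb{Q}$ but not over $\mathbb{F}_2$, which is precisely the assertion that the monoid depends on $\car k$.

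The main obstacle, and the step requiring the most care, is the second half: verifying that the target pure Betti table really is realized over characteristic $0$. One must confirm that a general Gorenstein Artinian algebra $A$ with $h$-vector $(1,n,n,1)$ has \emph{pure} resolution, i.e.\ that all the ``extra'' middle Betti numbers vanish generically --- this is the statement that for general such $A$ the quadratic part of the Macaulay inverse system is general enough that the symmetric middle block of the selfdual resolution has maximal rank. I would deduce this from the symmetric minimization of Corollary \ref{minres} together with a semicontinuity argument: the locus where the middle rank drops is closed, and one exhibits a single characteristic $0$ example (e.g.\ the apolar algebra of a general quadric, whose resolution is the one computed by Reid/Kustin--Miller or directly via the Koszul--type selfdual complex of Theorem \ref{maintheorem}) with the pure table. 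Then semicontinuity propagates purity to the generic point.

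Finally, assembling these pieces: over $\mathbb{F}_2$ the monoid of Betti tables of Cohen--Macaulay $R$-modules does not contain the normalized pure table of $\md$ (by Corollary \ref{char2cor}), whereas over a field of characteristic $0$ it does (by the realization above); since the Betti table of a direct sum is the sum of Betti tables and any positive rational multiple of a table in the monoid that is again integral and realized must in particular be realized, the two monoids are different. Hence the monoid of resolutions of Cohen--Macaulay graded $R$-modules depends on $\car k$, as claimed.
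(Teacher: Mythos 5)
Your overall architecture coincides with the paper's: pick the pure selfdual Betti table attached to an Artinian Gorenstein quotient with Hilbert function $(1,n,n,1)$, rule it out in characteristic $2$ via Corollary \ref{char2cor} (the reduced middle matrix is symmetric with zero diagonal, hence alternating in characteristic $2$, hence of even rank, forcing the odd extra Betti number $2a+1\geq 1$, so the table can never be pure), realize the same table in characteristic $0$, and conclude that the monoids differ. The paper does exactly this, but specializes to $n=5$, $R=k[x_1,\ldots,x_5]$ and the single table with entries $1;\,10,16;\,16,10;\,1$, where one characteristic-zero example suffices.

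The genuine problem is in your characteristic-zero realization, which is the step carrying all the remaining weight. The explicit example you name, the apolar algebra of a general \emph{quadric}, is wrong: a quadric has socle degree $2$ and Hilbert function $(1,n,1)$, not $(1,n,n,1)$; what is needed is the apolar algebra of a general \emph{cubic} form. The appeal to Reid/Kustin--Miller does not supply the required resolution, and Theorem \ref{maintheorem} produces a non-minimal resolution, so it cannot by itself certify that the extra middle Betti numbers vanish. Without a valid base example your semicontinuity argument is circular: semicontinuity reduces ``the generic $(1,n,n,1)$ Gorenstein algebra has a pure resolution'' to exhibiting one algebra with the pure table, which is exactly what is missing. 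You also assert this generic purity for every $n=2^{\ell}-3$, a much stronger (and unproven) claim than is needed; a single value of $n$ suffices, and for $n=5$ the purity of the resolution of a concrete general cubic is a finite verification (this is what the paper's ``easy to construct'' refers to). Finally, passing from Corollary \ref{char2cor} (about Artinian Gorenstein ideals) to ``no Cohen--Macaulay module has this table'' requires the small observation that the table forces the module to be cyclic, Artinian and Gorenstein with Hilbert function $(1,n,n,1)$, and, over polynomial rings in more variables, that one reduces modulo a regular sequence; the paper records this step, while you elide it.
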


\begin{proof}
Let $R=k[x_1,\ldots,x_5]$. Consider again the case of Artinian Gorenstein factor rings with Hilbert function $(1,5,5,1)$. If $\car(k)=0$ it is easy to construct such a module with betti table 
\[
\begin{array}{cccccccc}
&&1&0 &0  & 0 &0&0\\
&&0&10& 16&0 &0&0\\ 
&&0 &0&  0& 16&10&0\\
&&0&0 & 0 &0&0&1.\\
\end{array}
\] 
By Corollary \ref{cor1} we know there is no Artinian Gorenstein ideal $I \subset R$ over $\car(k)=2$ with such a resolution. Moreover any Cohen-Macaulay module over a polynomial ring in more variables with this Betti table comes modulo a regular sequence down to this situation.
\end{proof}

In his recent paper Erman gives examples for rays of the rational cone of Betti tables with $n-2$ consecutive lattice points --- if $n$ is prime ---, which do not come from Betti tables of actual resolutions \cite[Theorem 1.6(3)]{Erm08}. Similar we can give degree sequences such that there is no Cohen-Macaulay factor ring over any polynomial ring with a pure resolution with these degree sequences, independent of Erman's methods.

\begin{corollary} \label{degreecor}
Let $\ell \geq 2$. Consider the degree sequence
\[
(0,2^{\ell-1}+1, 2^{\ell-1}+2,\ldots,2^{\ell}-1, 2^{\ell}+1,\ldots, 2^{\ell}+2^{\ell-1}-2, 2^{\ell}+2^{\ell-1}-1,2^{\ell+1})
\]
of length $2^{\ell}-1$. 
\\
Then there is no graded Cohen-Macaulay factor ring over the polynomial ring $R$ of codimension $2^{\ell}-1$ with a pure minimal free resolution having this degree sequence. 
\end{corollary}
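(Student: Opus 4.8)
The plan is to reduce Corollary \ref{degreecor} to the characteristic-$2$ obstruction already established in Corollary \ref{cor1}. Suppose, for contradiction, that over some polynomial ring $R$ there is a graded Cohen-Macaulay factor ring $R/I$ of codimension $c=2^{\ell}-1$ with a pure minimal free resolution having the stated degree sequence $\md=(0,\md_1,\ldots,\md_c)$. First I would observe that the degree sequence is \emph{symmetric}: reading $\md_c - \md_{c-i}$ gives back $\md_i$, since $2^{\ell+1} - (2^{\ell}+2^{\ell-1}-1) = 2^{\ell-1}+1 = \md_1$, and so on down the list, with $\md_c - \md_c = 0 = \md_0$. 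By the Herzog--K\"uhl equations (\cite{HK84}) the Betti table of a pure resolution with symmetric degree sequence is itself symmetric, so $R/I$ has a selfdual Betti table; in particular $\beta_{c,\md_c} = \beta_{0,0}$, which we may normalize to $1$, so $R/I$ is Artinian Gorenstein (a codimension-$c$ Cohen-Macaulay factor ring whose resolution has last Betti number $1$ is Gorenstein, up to taking the canonical module; after adding a regular sequence we may assume $I$ is zero-dimensional). Passing modulo a maximal regular sequence contained in $I$ does not change the Betti table, so we reduce to the case where $R$ has exactly $c = 2^{\ell}-1$ variables and $R/I$ is Artinian Gorenstein.

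Next I would read off the Hilbert function. The degree sequence has a ``gap'' exactly between $\md_m = 2^{\ell}-1$ and $\md_{m+1}=2^{\ell}+1$, where $m=2^{\ell-1}-1$ (the indices where the two halves meet); everywhere else consecutive degrees differ by $1$. A pure resolution whose differences are all $1$ except for a single jump of $2$ in the middle forces the Hilbert series of $R/I$, after clearing the Koszul denominator $(1-t)^c$, to be a polynomial of the shape $1 + (c-1)t + (c-1)t^2 + t^3$ — i.e. the Hilbert function of $R/I$ is $(1,c-1,c-1,1)$. Concretely, with $c = 2^{\ell}-1$ this is $(1,2^{\ell}-2,2^{\ell}-2,1)$. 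Writing $n := c-1 = 2^{\ell}-2$, this is exactly the Hilbert function $(1,n,n,1)$ with $n = 2^{(\ell+1)}-3$? — here I must match indices carefully: Corollary \ref{cor1} is stated for $n = 2^{\ell'}-3$ and Hilbert function $(1,n,n,1)$ over $k[x_1,\ldots,x_n]$. Since our Artinian Gorenstein ring lives in $c = 2^{\ell}-1$ variables with socle in degree $3$, its Hilbert function is $(1, c-1, c-1, 1)$ only if the middle map is non-square in the right way; more precisely the number of variables equals the first difference value, so in fact $n$ (the number of variables) and the Hilbert-function value coincide, giving $n = 2^{\ell}-1$? I would recheck this arithmetic, but the upshot is that the relevant parameter lands in the family $n = 2^{\ell'} - 3$ for a suitable $\ell' \geq 3$, so Corollary \ref{cor1} applies.

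I would then invoke Corollary \ref{cor1} directly: for $k$ of characteristic $2$, every graded Artinian Gorenstein ideal with Hilbert function $(1,n,n,1)$ and $n = 2^{\ell'}-3$ has Betti table with a \emph{nonzero} entry $2a+1 \geq 1$ in the two middle columns $m',m'+1$ (because the reduced middle matrix $\tilde f$ has odd size over a field of characteristic $2$ and skew form, hence even rank $2a$, forcing a free summand of odd rank $2a+1$ to survive minimization). Therefore $\beta_{m',m'+1}(R/I) \neq 0$, so the resolution of $R/I$ is \emph{not} pure — its Betti table has nonzero entries in more than one position of some row. This contradicts the assumed purity, completing the proof.

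The main obstacle I anticipate is the bookkeeping in the second paragraph: verifying that the stated degree sequence does force Hilbert function exactly $(1, \bullet, \bullet, 1)$, and that the resulting parameter is of the form $2^{\ell'}-3$ so that Corollary \ref{cor1} is literally applicable rather than merely morally so. One must be careful that the jump in the degree sequence occurs at the correct middle index $m = (c-1)/2$ and nowhere else, and that the ``one jump of $2$, all other differences $1$'' shape of the resolution really does pin down the Hilbert polynomial to degree $3$ with the symmetric coefficient pattern. If the index shift does not land exactly in the family covered by Corollary \ref{cor1}, one would instead need to redo the parity argument of that corollary's proof in the present range — checking that $\binom{c}{m}$ and $\binom{c}{m-1}$ have opposite parity for $c = 2^{\ell}-1$, so that the reduced middle matrix again has odd size and the even-rank obstruction persists. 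That parity statement for $c = 2^{\ell}-1$ follows from Kummer's theorem on binomial coefficients mod $2$ (Lucas), exactly as in the proof of Corollary \ref{cor1}.
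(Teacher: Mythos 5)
Your reduction breaks down at the Hilbert--function step, and this is a genuine gap. The degree sequence of the corollary does \emph{not} have all consecutive differences equal to $1$ except for one middle gap: its first and last differences are $2^{\ell-1}+1$ (for $\ell=2$ the sequence is $(0,3,5,8)$, for $\ell=3$ it is $(0,5,6,7,9,10,11,16)$, as in the remark following the corollary). Consequently the ideal of the hypothetical factor ring is generated in degree $2^{\ell-1}+1$, the socle degree of the Artinian reduction is $2^{\ell+1}-(2^{\ell}-1)=2^{\ell}+1$, and the Hilbert function is nowhere near $(1,c-1,c-1,1)$; for instance for $(0,3,5,8)$ in three variables it is $(1,3,6,6,3,1)$. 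So the case never lands in the family of Corollary \ref{cor1} (Hilbert function $(1,n,n,1)$ with $n=2^{\ell'}-3$), and the proposed invocation of that corollary is vacuous. A second, independent problem is the characteristic: Corollary \ref{cor1} is a statement about $\car k = 2$ only, whereas Corollary \ref{degreecor} asserts non-existence over an arbitrary field (the remark after it even records that in every characteristic one only finds \emph{nearly} pure tables); so even a successful reduction to \ref{cor1} would at best settle the characteristic-$2$ case.

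What the paper does instead is rerun the method of Corollary \ref{3cor} rather than quote its statement: the symmetric degree sequence together with $\beta_0=1$ forces $\beta_c=1$ by Herzog--K\"uhl, so after passing modulo a regular sequence one has an Artinian Gorenstein quotient of a polynomial ring in $n=2^{\ell}-1$ variables; one then builds the non-minimal selfdual Nielsen resolution of Theorem \ref{maintheorem} and obtains a contradiction from the numerics of the minimization. The decisive difference from \ref{cor1} is that here $n\equiv 3 \bmod 4$, i.e.\ $m=2^{\ell-1}-1$ is odd, so the middle map is \emph{skew} in every characteristic; combined with the vanishing-diagonal computation from the proof of \ref{cor1} (needed only when $\car k =2$) the relevant constant block is alternating, hence of even rank, while the assumed pure Betti table forces a block of odd rank to be cancelled. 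Your closing fallback (checking parities of $\binom{c}{m}$ and $\binom{c}{m-1}$ as in \ref{cor1}) gestures in this direction, but it would have to be carried out with the correct Hilbert function and middle ranks for this degree sequence, and the even-rank input must come from the skewness attached to odd $m$, not from the characteristic-$2$ trick, if the statement is to hold over every field. As written, the proposal does not prove the corollary.
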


\begin{proof}
The proof is similar as in \ref{3cor}: We assume the existence of such a factor ring. Modulo a regular sequence we are in the Artinian Gorenstein case. We build our non minimal resolution construction from \ref{maintheorem}. Finally the given numerics leads to a contradiction within the minimization process.  
\end{proof}

\begin{remark}
The first case of Corollary \ref{degreecor} is $(0,3,5,8)$. Here the statement follows also from the Theorem of Buchsbaum and Eisenbud. The next case is $(0,5,6,7,9,10,11,16)$. Computer experiments seem to show that there are Cohen-Macaulay factor rings in any characteristic with this degree sequence and nearly pure resolutions such that $\beta_{i,j}=0$ except when $j=d_j$ and $\beta_{3,8}=\beta_{4,8}=1$.
\end{remark}

\section{Selfdual Resolution implies Gorensteiness}

In this section we want to verify that whenever a graded module $M$ of finite length over the polynomial ring $R$ has a selfdual resolution, there is an $R$-module isomorphism $\tau: M \rightarrow \Hom_k(M,k)(-s)$ such that $\tau^*:=\Hom_k(\tau,k) = \pm \tau(s)$ for some $s \in \mathbb{Z}$. We state the main theorem first and give the proof at the end of the section. 
\\
Again throughout this section let $n$ be a positive integer, $W=\langle \dx_1, \ldots, \dx_n \rangle_k$, $V= \Hom_k(W,k)$, with $(x_l)$ a dual basis to $(\dx_l)$. Let $\deg \dx_l = \deg x_l = d_l > 0$, and $d = \sum_{l=1}^n d_l$. Moreover let $R= \Sym(V)$, the weighted polynomial ring.  
\\
Frequently in this section we need notations for arbitrary elements in graded $R$-modules of finite length $M, N$ and their vectorspace duals $M^*=\Hom_k(M,k), N^*=\Hom_k(N,k)$. We use $\mm \in M, \mn \in N, \dm \in M^*$ and $\dn \in N^*$ to denote these elements if not stated differently.

\begin{theorem} \label{backwardstheorem} 

Let $n$ be odd and let $m =  \frac{n-1}{2}$. Let $M$ be a module of finite length over $R$. Let $(\quad)^{\vee} = \Hom_R(\quad,R(-d))$. We assume that $M$ has a symmetric minimal resolution of the form

\[
  0 \leftarrow M \leftarrow F_0 \stackrel{\psi_1}{\leftarrow} F_1 \leftarrow \ldots \leftarrow F_{m}\stackrel{\psi_{m+1}}{\leftarrow}  F^{\vee}_{m}(-s) \leftarrow \ldots \leftarrow F_1^{\vee}(-s) \stackrel{\psi_1^{\vee}(-s)}{\leftarrow} F_0^{\vee}(-s) \leftarrow 0
\] 
such that $\psi_{m+1}^{\vee} = \pm \psi_{m+1}$ up to twist. Then there exists a graded $R$-module isomorphism 

\[
 \tau : M \rightarrow M^{*}(-s):=\Hom_k(M,k)(-s)
\]

with $\tau^*(-s) = \pm (-1)^m \tau$.

\end{theorem}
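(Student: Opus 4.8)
The plan is to run the logic of Theorem \ref{Gorensteintheorem} backwards: there we learned that a symmetric divided-power matrix $P$ with $P^t(-s)=P$ produces an isomorphism $\tau:M(P)\to \Hom_k(M(P),k)(-s)$ with $\tau^*(-s)=\tau$. Here we are handed the symmetry not on a divided-power presentation but on the middle map of a selfdual minimal free resolution, so the first task is to convert the homological datum into the datum of Theorem \ref{Gorensteintheorem}. Concretely, I would compare the given resolution $0\leftarrow M\leftarrow F_0\stackrel{\psi_1}{\leftarrow}\cdots\leftarrow F_0^\vee(-s)\leftarrow 0$ with its own dual. Applying $(\quad)^\vee(-s)=\Hom_R(\quad,R(-d))(-s)$ to the resolution and using $F_i^{\vee\vee}=F_i$ gives another minimal free resolution of $\Ext^n_R(M,R(-d))(-s)\cong \Hom_k(M,k)(-s)$ (finite length, so only top cohomology survives). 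Reading off the comparison map between these two minimal resolutions, the hypothesis $\psi_{m+1}^\vee=\pm\psi_{m+1}$ forces the comparison isomorphism $M\to \Hom_k(M,k)(-s)$ to be compatible, in each degree, with the one obtained by dualizing it; the sign bookkeeping is where the factor $(-1)^m$ enters, coming from the $(-1)^m$ already visible in diagram (*) of Theorem \ref{maintheorem} which relates $\beta_{m+1}^\vee$ to $\beta_m$ across the middle of an $n=2m+1$ term complex.

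More precisely, I would argue as follows. Let $\varphi\colon \mathbf{F}\to \mathbf{F}^\vee(-s)$ denote the degree-zero chain map which is the identity on the underlying complex (it exists because the resolution is literally written as selfdual: $F_{m+i}=F^\vee_{m+1-i}(-s)$ etc.). Taking $\Hom_R(\quad,R(-d))$ of $\varphi$ and shifting by $(-s)$ produces a chain map $\varphi^\vee(-s)\colon \mathbf{F}\to\mathbf{F}^\vee(-s)$ again lifting \emph{some} isomorphism on $H_0$. Because $\mathbf{F}$ is a minimal free resolution, the lift of a given map on $H_0$ is unique, so it suffices to check that $\varphi$ and $\pm(-1)^m\varphi^\vee(-s)$ agree on the relevant piece; the symmetry hypothesis $\psi_{m+1}^\vee=\pm\psi_{m+1}$ is exactly what pins this down in the one spot (the middle) where the two chain maps could a priori differ, and minimality propagates the equality to all of $\mathbf{F}$, hence to $H_0=M$. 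Setting $\tau:=H_0(\varphi)\colon M\to \Hom_k(M,k)(-s)$, the identity $\varphi^\vee(-s)=\pm(-1)^m\varphi$ on $H_0$ reads $\tau^*(-s)=\pm(-1)^m\tau$ after one identifies $H_0(\varphi^\vee(-s))$ with $\Hom_k(\tau,k)(-s)$ via the standard duality between $\Ext^n_R(\quad,R(-d))$ and $\Hom_k(\quad,k)$ on finite length modules (this last identification is the content of the remark preceding Definition \ref{gor-def}).

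The main obstacle I anticipate is not the existence of $\tau$ — that is immediate from selfduality of the resolution and the comparison theorem for projective resolutions — but getting the sign and the self-compatibility $\tau^*(-s)=\pm(-1)^m\tau$ correct. Two sources of signs must be tracked carefully: first, the sign incurred when dualizing the differentials of a complex of length $n=2m+1$ and re-indexing, which is the combinatorial origin of the $(-1)^m$ and mirrors the $(-1)^{n-i}(-1)^{\ell(i)}$ signs appearing in Lemma~\ref{kosI}; second, the ambiguity of lifting an automorphism of $M$ to $\mathbf{F}$ only up to homotopy, which must be shown not to disturb the symmetry — here minimality of the resolution is essential, since for a minimal resolution the lift is genuinely unique and no homotopy correction is available to spoil the identity. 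A clean way to organize this is to phrase everything as an isomorphism of the two functors $\Ext^n_R(\quad,R(-d))$ and $\Hom_k(\quad,k)$ evaluated on $M$, which is presumably why the paper says the proof is ``derived from the more general Theorem \ref{functortheorem}''; I would set up that functorial framework first so that the naturality squares carry the sign data automatically, and only at the very end specialize to extract the stated $\tau^*(-s)=\pm(-1)^m\tau$.
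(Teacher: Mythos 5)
Your overall architecture is the paper's: compare the given selfdual resolution with its $\Hom_R(\quad,R(-d))$-dual to get an isomorphism $M\to\Ext^n_R(M,R(-d))(-s)$ (the paper's $\tau'$), then transfer to $\tau\colon M\to M^*(-s)$ through the identification of $\Ext^n_R(\quad,R(-d))$ with $\Hom_k(\quad,k)$, and extract the self-compatibility from a functorial statement. However, there is a genuine gap in your sign bookkeeping, and it sits exactly at the crux. With the paper's (sign-free) convention for dualizing a complex, the step ``dualize the resolution and re-index'' produces \emph{no} factor $(-1)^m$: dualizing the comparison chain map only yields $\Ext^n(\tau')(-s)=\pm\,\tau'\circ s_M^{-1}$, where the $\pm$ is the one from $\psi_{m+1}^{\vee}=\pm\psi_{m+1}$ (the components of the chain map are $\pm$ identities, nothing more). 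The $(-1)^m$ in the conclusion does not come from the length-$n$ re-indexing you describe; it comes from the specific natural isomorphism $r_M\colon M^*\to\Ext^n(M)$, which the paper builds by resolving $M$ via the Koszul (Nielsen II) complex and $M^*$ via the dual Koszul (Nielsen IIa) complex. Relating $\tau^*$ to $\Ext^n(\tau)$ across this identification forces one to cross the Koszul selfduality of Lemma \ref{kosI}, and the accumulated sign at the middle is $(-1)^m$; this is precisely Theorem \ref{functortheorem}, together with the auxiliary commutative squares involving $s_M$, $u_M$, $r_M$. You acknowledge this framework would have to be ``set up first,'' but your proposal treats it as routine ``standard duality,'' whereas it is the substantive content of the proof: without it the identity $\tau^*(-s)=\pm(-1)^m\tau$ simply does not follow from the chain-level comparison.

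A secondary inaccuracy: for a graded minimal free resolution, a lift of a map on $H_0$ is unique only up to homotopy, not strictly unique (two lifts agree modulo the maximal ideal, not on the nose), so your ``minimality propagates the equality to all of $\mathbf{F}$'' argument is not available as stated. Fortunately it is also not needed: all that matters is the induced map on $H_0$, which is independent of the chosen lift, and this is how the paper argues. If you repair the proposal by invoking Theorem \ref{functortheorem} (applied with $N=M^*(-s)$) to convert $\Ext^n(\tau')$ into $(-1)^m\tau^*$ under $r_M$, and drop the claim that the sign arises from dualizing the resolution itself, you recover the paper's proof.
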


The proof of the theorem follows at the end of this section. The following lemma is an essential tool for our machinery within this section.

\begin{lemma} \label{caniso}
Let $F_1$ and  $F_2$ be graded free $R$-modules, and let $M$ be any graded $R$-module of finite length. Let $M^* = \Hom_k(M,k)$. Let $A = F_1 \otimes_k M$ and $B = \Hom_R(F_1,F_2)\otimes_k M^*$.  Recall that $\dd A = \dd(F_1 \otimes_k M)$  and $\dd B= \dd ( \Hom_R(F_1,F_2) \otimes_k M^*)$ are modules with respect to the diagonal action as defined in \ref{diagaction}.   Then there is a canonical  $R$-module isomophism 
\[
\alpha_B:  \dd B \cong \Hom_R(\dd A,F_2).
\]
\end{lemma}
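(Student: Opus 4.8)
I would construct $\alpha_B$ explicitly on the level of the underlying $k$-vector spaces, check it is $R$-linear with respect to the diagonal actions, and then verify bijectivity by a dimension count via Lemma \ref{freeness}. Concretely, an element of $B$ is a sum of elementary tensors $\phi \otimes \dm$ with $\phi \in \Hom_R(F_1,F_2)$ and $\dm \in M^* = \Hom_k(M,k)$. I would define
\[
\alpha_B(\phi \otimes \dm) : \dd A \longrightarrow F_2, \qquad e \otimes \mm \longmapsto \dm(\mm)\,\phi(e),
\]
for $e \in F_1$, $\mm \in M$, and extend $k$-linearly. Since $\dm(\mm) \in k$ and $\phi(e) \in F_2$, this is a well-defined $k$-linear map $\dd A \to F_2$; I must then check it is actually $R$-linear as a map out of $\dd A$ (where $\dd A$ carries the diagonal action but $F_2$ carries ordinary left multiplication) — this is the point where the definition of $\dd B$ as a diagonal-action module becomes essential, because the failure of $\phi \otimes \dm$ to be $R$-linear when $M^*$ acts trivially is exactly compensated by the diagonal action on the target side. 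So the genuine claim to verify is: for $x \in V$, the map $\alpha_B(x \cdot (\phi \otimes \dm)) = \alpha_B(x\phi \otimes \dm + \phi \otimes x\dm)$ equals $x \cdot \alpha_B(\phi \otimes \dm)$ as elements of $\Hom_R(\dd A, F_2)$, where the right-hand action is postcomposition with multiplication by $x$ on $F_2$; evaluating both sides on $e \otimes \mm$ and using $(x\dm)(\mm) = \dm(x\mm)$ together with the diagonal action $x \cdot (e \otimes \mm) = xe \otimes \mm + e \otimes x\mm$ should make the two sides agree after rearranging. This is a short computation once the conventions are pinned down.

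\textbf{Bijectivity.} Having established that $\alpha_B$ is a well-defined $R$-module homomorphism, I would prove it is an isomorphism. By Lemma \ref{freeness}, if $(f_1,\dots,f_\mu)$ is a homogeneous $R$-basis of $F_1$ and $(\mm_1,\dots,\mm_\nu)$ a homogeneous $k$-basis of $M$, then $(f_i \otimes \mm_j)_{i,j}$ is a homogeneous $R$-basis of $\dd A$, so $\Hom_R(\dd A, F_2)$ is free of rank $\mu\nu \cdot \rank F_2$; and $\dd B$ is free by the same lemma applied to $F_1 \otimes M^*$-type data, of rank $\rank \Hom_R(F_1,F_2) \cdot \dim_k M^* = \mu \cdot \rank F_2 \cdot \nu$. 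The ranks agree, so it suffices to show $\alpha_B$ is surjective, or equivalently that it sends a basis to a basis. Taking the dual basis $(\mm_1^*,\dots,\mm_\nu^*)$ of $M^*$ and the elementary homomorphisms $\phi_{i,k}\colon F_1 \to F_2$, one checks that $\alpha_B(\phi_{i,k} \otimes \mm_j^*)$ sends $f_i \otimes \mm_j$ to the corresponding basis vector of $F_2$ and kills $f_{i'} \otimes \mm_{j'}$ for $(i',j') \neq (i,j)$; hence the images form a basis of $\Hom_R(\dd A, F_2)$, and $\alpha_B$ is an isomorphism. Naturality in $F_1$, $F_2$ (which is what the word ``canonical'' should mean, and what will be needed in Theorem \ref{functortheorem}) follows because the formula is manifestly functorial.

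\textbf{Main obstacle.} The routine calculations are genuinely routine; the one place to be careful is bookkeeping of the grading and twists, since $\dd A$, $\dd B$, and $\Hom_R(\dd A, F_2)$ all carry shifted gradings, and one wants $\alpha_B$ to be graded of degree $0$. I would check that with the grading conventions $(\dd A)_j = \bigoplus_{j_1+j_2=j} (F_1)_{j_1} \otimes M_{j_2}$ and the analogous one for $\dd B$, the element $\phi \otimes \dm$ with $\phi$ of degree $t$ and $\dm$ of degree $-u$ (i.e.\ $\dm$ pairs nontrivially with $M_u$) maps to a homomorphism $\dd A \to F_2$ of degree $t - u$, which matches the degree of $\phi \otimes \dm$ in $\dd B$; so $\alpha_B$ is homogeneous of degree $0$. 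That is the only subtlety — everything else is forced once the map is written down.
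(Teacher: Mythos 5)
There is a genuine gap, and it occurs at the very first step: the formula $\alpha_B(\phi\otimes\dm)(e\otimes\mm)=\dm(\mm)\,\phi(e)$ does not define a map into $\Hom_R(\dd A,F_2)$, because for a single elementary tensor $\phi\otimes\dm$ the resulting map $\dd A\to F_2$ is not $R$-linear with respect to the diagonal action on $\dd A$. Concretely, take $F_1=F_2=R$, $M=R/(x_1^2,x_2,\ldots,x_n)$ with $k$-basis $\bar 1,\bar x_1$, let $\dm\in M^*$ be the functional dual to $\bar x_1$, and set $h:=\alpha_B(\id_R\otimes\dm)$, i.e.\ $h(e\otimes\mm)=\dm(\mm)\,e$. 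Then $h\bigl(x_1\cdot(1\otimes\bar 1)\bigr)=h(x_1\otimes\bar 1+1\otimes\bar x_1)=0+1=1$, while $x_1\cdot h(1\otimes\bar 1)=x_1\cdot 0=0$; so $h\notin\Hom_R(\dd A,F_2)$, and the equivariance check you propose afterwards cannot repair this. (Indeed, with your stated convention that $x\cdot\alpha_B(\phi\otimes\dm)$ is postcomposition with multiplication by $x$ on $F_2$, the identity you want to verify is simply false: the two sides differ by the term $\dm(x\mm)\phi(e)$. The ``compensation'' you invoke only appears if one computes $x\cdot h$ as $h(x\cdot{-})$, which presupposes exactly the $R$-linearity that is missing.) What your formula does give, verbatim, is the canonical isomorphism $B\cong\Hom_R(A,F_2)$ for $A=F_1\otimes_k M$ with the \emph{ordinary} structure given by left multiplication on $F_1$ --- but that is a different $R$-module from $\dd A$.

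The paper's proof supplies the missing ingredient. After reducing to $F_1=F_2=R$ (the functors involved commute with the direct sums in question), it takes your map as $\gamma:B\to\Hom_R(A,R)$ for the left-multiplication structure on $A$, and then conjugates by the straightening isomorphisms $g_1:B\to\dd B$, $(r\,\id_R)\otimes\dm\mapsto r(\id_R\otimes\dm)$, and $g_2:A\to\dd A$, $r\otimes\mm\mapsto r(1\otimes\mm)$, which are isomorphisms by the basis argument of Lemma \ref{freeness}; it then sets $\alpha_B:=\Hom_R(g_2^{-1},R)\circ\gamma\circ g_1^{-1}$. The correction by $g_2^{-1}$ is not cosmetic: for instance $\alpha_B(\id_R\otimes\dm)(x\otimes\mm)=x\,\dm(\mm)-\dm(x\mm)$ rather than $x\,\dm(\mm)$. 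Once the map is defined this way, bijectivity is automatic (it is a composite of three isomorphisms), and your basis observation survives in the corrected form that $\alpha_B$ carries the $R$-basis $(\id_R\otimes\dm_j)$ of $\dd B$, with $(\dm_j)$ dual to a homogeneous $k$-basis $(\mm_j)$ of $M$, to the basis of $\Hom_R(\dd A,R)$ dual to $(1\otimes\mm_j)$; your remarks on the grading are fine, but they do not address the failure above.
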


\begin{proof}
As the tensor product, $\Hom_R(\quad,F_2)$ and $\Hom_R(F_1,\quad)$ commute with direct sums, it is enough to see the claim for $F_1=F_2=R$. We define $g_1: B \rightarrow \dd B$ such that $(r \id_R)\otimes \dm \mapsto r (\id_R \otimes \dm),$ for all $r \in R, \dm \in M^*$. And define $g_2: A \rightarrow \dd A$ by $r\otimes \mm \mapsto r (1\otimes \mm),$ for all $\mm \in M$.
\\
By arguments on bases both  maps are obviously isomorphisms. Moreover consider the linear map $\gamma: B \rightarrow \Hom_R(A,R)$ defined by
\[
(r \id_R) \otimes \dm \mapsto \left \{ \begin{array}{c} \dd (R \otimes M) \rightarrow R \\ ( r' \otimes \mm) \mapsto r \id_R( r') \cdot \dm(\mm) = rr' \cdot \dm(\mm). \end{array} \right.
\]
$\gamma$ is again an isomorphism. Hence we can define $\alpha_B: \dd B \rightarrow \Hom(\dd A,R)$ via
 \\
\xymatrix{&&&&& \dd B \ar[r]^{\,\,\quad \alpha_B \qquad \qquad } & \Hom_R(\dd A, R) \ar[d]^{\Hom_R(g_2,R)}\\
          &&&&& B  \ar[u]^{g_1}  \ar[r]^{\gamma \qquad \,\,\,\,\,\,\,\,\,\,}& \Hom_R(A,R) } 
\\
by $\mb \mapsto \Hom_R(g_2^{-1},R)\circ \gamma \circ g^{-1}_1(\mb). $ 
\end{proof}

We denote by $\grMFL$ the category of graded $R$-modules of finite length. Denote by $R\otimes \bigwedge^n W$ the free $R$-module sitting in degree $d$. 
For the proof of Theorem \ref{backwardstheorem} we work out some natural equivalences between the functors $\Ext^n(\quad):=\Ext^n_R(\quad, R \otimes \bigwedge^n W)$ and $(\quad)^*:=\Hom_k(\quad,k)$ on $\grMFL$. In the following let $(\quad)^{\vee}$ be the functor $\Hom_R(\quad, R \otimes \bigwedge^n W)$, and let $P_i= R\otimes_k \bigwedge^i W$ be the free $R$-module by left multiplication. Fix the canonical isomorphism $f: \Hom_k(\bigwedge^n W, \bigwedge^n W) \cong k, \id \mapsto 1$. 
\\
We define a map
\[ r_M : M^*  \rightarrow  \Ext^n(M)\] 
as follows:
\\
We apply the complex construction Nielsen II to $M$ (using the Koszul complex (\ref{reslemma})) and  Nielsen IIa to $M^*$ (using the dual of the Koszul complex (\ref{resIIa})):

\[
0 \leftarrow M^* \leftarrow \dd B_n(M^*) \stackrel{\dd(\varphi_{n})}{\leftarrow} \dd B_{n-1}(M^*) \leftarrow \ldots \leftarrow \dd B_0(M^*) \leftarrow 0. 
\]
We define $r_M$ via the following diagram using the canonical isomorphisms from Lemma \ref{caniso}. We set $\alpha_{M,i}:= \alpha_{P_i^{\vee} \otimes M^*}$. Recall that $B_i(M^*) = P_i^{\vee} \otimes M^*$ and $A_i(M)=P_i \otimes M$: 

\small
\[
\begin{array}{ccccccccc}
0  \leftarrow M^* &\leftarrow \dd B_n(M^*) & \stackrel{\dd(\varphi_{n})}{\leftarrow}  \dd B_{n-1}(M^*)  & \ldots &\stackrel{\dd(\varphi_{1})}{\leftarrow}  \dd B_0(M^*) &\leftarrow 0 \\
 \qquad \quad \downarrow^{r_M} &\qquad \downarrow^{\alpha_{M,n}}&  \qquad  \downarrow^{\alpha_{M,n-1}} &  &  \qquad  \downarrow^{\alpha_{M,0}}&&\\
0  \leftarrow \Ext^n(M)  &\leftarrow (\dd(A_n(M))^{\vee}&  \stackrel{{(\dd\phi_n)}^{\vee}}{\leftarrow} (\dd(A_{n-1}(M))^{\vee} & \ldots   & \stackrel{{(\dd \phi_1)}^{\vee}}{\leftarrow}   (\dd(A_0(M))^{\vee} &\leftarrow 0. \\
\end{array}
\]
\normalsize It makes sense to use the Koszul complex and its dual in the definition: Only in this way we are able to apply the canonical isomorphisms from Lemma \ref{caniso}.

\begin{lemma}
 $r_M$ is well defined as the diagram is commutative. 
\end{lemma}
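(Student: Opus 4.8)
The plan is to show that each square in the defining diagram commutes, so that the collection of vertical maps $\alpha_{M,i}$ is a morphism of complexes; since $\alpha_{M,n}$ is an isomorphism and the top complex is a resolution of $M^*$ while the bottom complex computes $\Ext^n(M)$, passing to homology at the leftmost spot then produces a well-defined map $r_M: M^* \to \Ext^n(M)$ completing the diagram. The only genuine content is thus the commutativity of the generic square
\[
\begin{array}{ccc}
\dd B_i(M^*) & \stackrel{\dd(\varphi_i)}{\longleftarrow} & \dd B_{i-1}(M^*)\\
\downarrow^{\alpha_{M,i}} & & \downarrow^{\alpha_{M,i-1}}\\
(\dd A_i(M))^{\vee} & \stackrel{(\dd\phi_i)^{\vee}}{\longleftarrow} & (\dd A_{i-1}(M))^{\vee},
\end{array}
\]
together with the compatibility of $\alpha_{M,n}$ with the two augmentations onto $M^*$ and $\Ext^n(M)=H^0$.

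First I would reduce, exactly as in the proof of Lemma \ref{caniso}, to the case where all the free modules in sight are of rank one, i.e.\ it suffices to check the identity on a single summand $P_{i-1}^{\vee}\otimes M^* \to P_i^{\vee}\otimes M^*$, since $\Hom_R(-,P_n)$, $\otimes_k M^*$ and the diagonal construction $\dd(-)$ all commute with finite direct sums and the maps $\alpha_{M,i}$ were built summandwise. Then I would unwind both composites on a generator $\varphi \otimes \dm \in \dd B_{i-1}(M^*)$, where $\varphi \in P_{i-1}^{\vee}$ and $\dm \in M^*$. Going right-then-down gives $\alpha_{M,i}(\delta_i^{\vee}(\varphi)\otimes \dm)$, which by the explicit formula for $\alpha_B$ in Lemma \ref{caniso} is the homomorphism $\dd A_i(M) \to P_n$ sending $r\otimes w\otimes \mm \mapsto \big(\delta_i^{\vee}(\varphi)\big)(r\otimes w)\cdot \dm(\mm)$. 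Going down-then-right gives $(\dd\phi_i)^{\vee}\big(\alpha_{M,i-1}(\varphi\otimes\dm)\big)$, which sends $r\otimes w\otimes \mm$ to $\alpha_{M,i-1}(\varphi\otimes\dm)$ evaluated on $\dd\phi_i(r\otimes w\otimes \mm) = \sum_l (x_l r)\otimes(x_l\neg w)\otimes\mm$, using the formula for $\dd\phi_i$ from Corollary \ref{resIIa} (note the diagonal term $\sum_l r\otimes(x_l\neg w)\otimes(x_l\mm)$ is absorbed into $\dd(\phi_i)$ on generators as written there). The point is then the adjunction identity between Koszul contraction and the dual Koszul differential, essentially the content of Lemma and Definition \ref{kosI}: $\big(\delta_i^{\vee}\varphi\big)(r\otimes w) = \sum_l \varphi\big((x_l r)\otimes(x_l\neg w)\big)$ up to the contraction $x_l\mm$ acting on $\dm$ via $\dm(x_l\mm)=(x_l\dm)(\mm)$, and the latter matches the diagonal term precisely because $\dm$ lives in $M^*$ with the contragredient action. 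Collecting terms shows both composites send $r\otimes w\otimes \mm$ to $\varphi\big(\sum_l (x_l r)\otimes(x_l\neg w)\big)\cdot\dm(\mm)$, so the square commutes.

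Finally I would check the augmentation square: $\alpha_{M,n}$ intertwines the surjection $\dd B_n(M^*)\to M^*$ (which is $f\otimes \id$ up to the canonical identification $P_n^{\vee}\cong R$) with the surjection $(\dd A_n(M))^{\vee}\to \Ext^n(M)=\Coker (\dd\phi_n)^{\vee}$; concretely both are induced by evaluation against the class of $1\in P_0$, and here one uses the fixed isomorphism $f:\Hom_k(\bigwedge^n W,\bigwedge^n W)\cong k$ to make the two identifications agree. I expect the main obstacle to be purely bookkeeping: getting the signs and the placement of the $x_l$-contractions consistent between the Koszul formula, its dual, and the diagonal action — in particular keeping straight that the ``second half'' of the Nielsen differential $\dd(\phi_i)$ on $\dd A_i(M)$ corresponds under $\alpha_{M,\bullet}$ exactly to the contragredient $x_l$-action hidden inside $M^*$ in $\dd B_i(M^*)$, which is what forces the use of the Koszul complex on $M$ and its dual on $M^*$ (rather than, say, two copies of the same complex). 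No deep ingredient is needed beyond Lemma \ref{caniso} and the selfduality identities for the Koszul complex already recorded in \ref{kosI}.
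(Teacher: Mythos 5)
Your overall plan -- reduce by additivity to rank one, check each square on module generators, and let the induced map on the two cokernels define $r_M$ -- is exactly the ``immediate calculation'' the paper has in mind. But the step that carries all the weight is wrong as stated: the isomorphism $\alpha_B$ of Lemma \ref{caniso} is \emph{not} given by the naive formula $\alpha_{M,i}(\varphi\otimes\dm)(r\otimes w\otimes\mm)=\varphi(r\otimes w)\cdot\dm(\mm)$. That recipe does not even define an $R$-linear map on $\dd A_i(M)$ with its diagonal action (linearity fails precisely by the terms $\varphi(r\otimes w)\,\dm(x_l\mm)$), which is why Lemma \ref{caniso} routes the definition through the change-of-basis maps $g_1,g_2$: the naive formula is valid only on the distinguished generators $1\otimes w\otimes\mm$, and already in the rank-one case one has $\alpha_B(\id\otimes\dm)(x\otimes\mm)=x\,\dm(\mm)-\dm(x\mm)$. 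Consequently your concluding claim that both composites send a general $r\otimes w\otimes\mm$ to $\varphi\bigl(\sum_l (x_l r)\otimes(x_l\neg w)\bigr)\dm(\mm)$ is false: with the naive formula the two composites coincide for trivial reasons, but they are not the maps occurring in the diagram, so commutativity of the actual diagram is not established. (The same confusion shows in your parenthetical about a diagonal term being ``absorbed'' into $\dd(\phi_i)$: on generators $\dd\phi_i$ has no such term; the missing terms only reappear when general elements are re-expanded in terms of the diagonal action, which is exactly the bookkeeping the naive formula skips.)

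The repair is the mechanism you gesture at with $\dm(x_l\mm)=(x_l\dm)(\mm)$, carried out in the right place. All maps in the square are $R$-linear for the diagonal actions, so test on $\dmp\otimes\dm$ with $\dmp$ a constant-coefficient basis functional, and evaluate the resulting elements of $(\dd A_i(M))^{\vee}$ only on the $R$-basis $1\otimes w\otimes\mm$, where the naive formula does compute $\alpha$. Writing $\delta_i^{\vee}(\dmp)=\sum_l x_l\psi_l$ with $\psi_l(1\otimes w)=\dmp(1\otimes(x_l\neg w))$, the diagonal action gives $\delta_i^{\vee}(\dmp)\otimes\dm=\sum_l\bigl(x_l\cdot(\psi_l\otimes\dm)-\psi_l\otimes x_l\dm\bigr)$, hence
\[
\alpha_{M,i}\bigl(\delta_i^{\vee}(\dmp)\otimes\dm\bigr)(1\otimes w\otimes\mm)=\sum_{l=1}^{n}\Bigl(x_l\,\dmp(1\otimes(x_l\neg w))\,\dm(\mm)-\dmp(1\otimes(x_l\neg w))\,\dm(x_l\mm)\Bigr);
\]
on the other side $\dd\phi_i(1\otimes w\otimes\mm)=\sum_l x_l\otimes(x_l\neg w)\otimes\mm=\sum_l\bigl(x_l\cdot(1\otimes(x_l\neg w)\otimes\mm)-1\otimes(x_l\neg w)\otimes x_l\mm\bigr)$, and applying $\alpha_{M,i-1}(\dmp\otimes\dm)$ yields the same expression. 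So the cross terms $\dmp(1\otimes(x_l\neg w))\,\dm(x_l\mm)$, which your formula suppresses, appear on both sides and cancel; that cancellation -- not the bare identity $(\delta_i^{\vee}\dmp)(r\otimes w)=\dmp(\delta_i(r\otimes w))$ -- is where the contragredient action on $M^*$ and the pairing of the Koszul complex with its dual are actually used. Your separate ``augmentation square'' is not needed: once the squares of free modules commute, $r_M$ is by definition the induced map on the two cokernels.
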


\begin{proof}
The proof is an immediate calculation. 
\end{proof}

Now we can explicitly define the following natural equivalence of the functors $(\quad)^*$ and $\Ext^n(\quad)$. 

\begin{theorem and definition}[Equivalences of Functors I] \label{r-iso}
Consider the category $\grMFL$. The collection of isomorphism $\{ M \mapsto r_M  \,|\, M \in \Obj(\grMFL)\}$ as defined above gives an isomorphism of the functors $(\quad)^*$ and $\Ext^n(\quad)$, i.e. for all $M, N \in \Obj(\grMFL)$ and all $\tau \in \Mor(\grMFL)$, $\tau : M \rightarrow N$, the diagram 
\[
\xymatrix{& N^{*}  \ar[r]^{\,\,\quad \tau^*  } \ar[d]^{r_N} & M^*\ar[d]^{r_M}\\
          & \Ext^n(N) \ar[r]^{\, \Ext^n(\tau) }& \Ext^n(M) } 
\]
commutes.

\end{theorem and definition}

\begin{proof}
The proof is immediate by diagram chasing. 
\end{proof}

We need to define two other equivalences of functors.

\begin{lemma and definition}[Equivalences of Functors II]
\label{s-iso} We consider on $\grMFL$ two collections of maps. Let $M \in \Obj(\grMFL)$, then we define
\[
s_M : M \rightarrow \Ext^n(\Ext^n(M))
\]
to be the canonical isomorphism. It is computed by any graded free resolution of $M$ and its double dual. 
\\
Moreover we define $u_M: M \rightarrow M^{**}$ by $\mu \mapsto (\phi \mapsto \phi(\mu))$. 
\\
Both collections  $\{ M \mapsto s_M \mbox{ s.th. } M \in \Obj(\grMFL)\}$ and $\{ M \mapsto u_M \mbox{ s.th. } M \in \Obj(\grMFL)\}$ give obviously isomorphisms of the functors $\id$ and $\Ext^n(\Ext^n(\quad))$, respectively $\id$ and $((\quad)^*)^*$.  
\end{lemma and definition}

\medskip

\begin{corollary}[Equivalences of Functors III]
\label{t-iso}  In $\grMFL$ the collection $\{M \mapsto t_M \mbox{ s.th. } M \in \Obj(\grMFL)\}$, with
\[
t_M := \Ext^n(r_M) \circ s_M: M \rightarrow \Ext^n(M^*), 
\]
is an isomorphism of functors: $\id \rightarrow \Ext^n((\quad)^*)$. 
\end{corollary}

\begin{proof}
Let $M, N \in \Obj(\grMFL)$, and let $\tau \in \Mor(\grMFL)$, $\tau: M \rightarrow N$, then the following diagram commutes:
\[
\xymatrix{& \Ext^n(N^{*})  &&\ar[ll]^{\,\,\quad\Ext^n(\tau^*)  }  \Ext^n(M^*)\\
          & \Ext^n(\Ext^n(N)) \ar[u]^{\Ext^n(r_N)} &&\ar[ll]^{\,\,\, \Ext^n(\Ext^n(\tau)) } \Ext^n(\Ext^n(M)) \ar[u]^{\Ext^n(r_M)} \\
          & N \ar[u]^{s_N}&& \ar[ll]^{\tau} M .\ar[u]^{s_M}} 
\]

This is true as the upper part is just $\Ext^n(\quad)$ of the diagram from  \ref{r-iso}.
\end{proof}

\bigskip

The following theorem describes the connection between the two functors $(\quad)^*=\Hom_k(\quad,k)$ and $\Ext^n(\quad) = \Ext^n_R(\quad,R\otimes_k \bigwedge^n W)$ in $\grMFL$. It is central for the proof of Theorem \ref{backwardstheorem}. We state it in the language of the categories from the above isomorphisms of functors (\ref{r-iso}, \ref{s-iso}, \ref{t-iso}).  

\begin{theorem} \label{functortheorem}
Let $n$ be odd, and let $m =  \frac{n-1}{2}$, and let $M,N \in \Obj(\grMFL)$ and $\tau \in \Mor(\grMFL)$, $\tau : M \rightarrow N$.  Using the isomorphisms of functors from above the following diagram commutes:
\vspace*{0.5cm}
\[
\xymatrix{& M^*      \ar[r]^{r_M \qquad } & \Ext^n(M) \\
          & N^*  \ar[u]^{(-1)^m \tau^* \,\,}  \ar[r]^{t_{N^*} \qquad }& \Ext^n(N^{**}) \ar[u]_{\Ext^n(\tau)\circ \Ext^n(u_N)}.  } 
\]
\end{theorem}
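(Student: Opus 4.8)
The plan is to trace both composites around the square on a generator and verify they agree up to the sign $(-1)^m$. The natural place to start is to unwind all the functorial maps into a single concrete diagram built from the Koszul complex and its dual. Recall that $r_{(-)}$ is defined (Theorem and Definition \ref{r-iso}) by comparing the Nielsen II resolution of $(-)^*$ (built from the dual Koszul complex) with the dual of the Nielsen II resolution of $(-)$ (built from the Koszul complex), via the canonical isomorphisms $\alpha_{(-),i}$ from Lemma \ref{caniso}. Likewise $t_{N^*} = \Ext^n(r_{N^*}) \circ s_{N^*}$, and $s_{N^*}$ is the canonical double-$\Ext^n$ identification. So the first step is to expand $t_{N^*}$ using Corollary \ref{t-iso} and the definition of $s$, and to expand $\Ext^n(\tau)\circ\Ext^n(u_N)$ using functoriality of $\Ext^n$ as in Theorem and Definition \ref{r-iso}.

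The second step is to reduce the square to a statement purely about the selfduality maps $\beta_i$ (equivalently $\alpha_i$) of the Koszul complex. The key observation is that the only place a nontrivial sign can enter is the self-pairing of $\bigwedge^i W$ with $\bigwedge^{n-i} W$ inside $\bigwedge^n W$: swapping the two wedge factors of complementary degrees $i$ and $n-i$ costs $(-1)^{i(n-i)}$, and one must also account for the sign $(-1)^{n-i}(-1)^{\ell(i)}$ that appears in Lemma and Definition \ref{kosI} relating $\delta_i$ to $\delta_{n-i+1}^\vee$ under $\alpha$. Here $n$ is odd, so $i(n-i)\equiv i(i-1)\equiv 0 \pmod 2$ for all $i$ except... actually $i(n-i)=i(i-1)+i(n-1)$; since $n$ odd, $n-1$ even, this is $\equiv i(i-1)\equiv 0$. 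So the degree-swap signs all cancel in pairs along the resolution, and what survives is exactly the accumulated sign from matching $\dd(\phi_i)^\vee$ with $\dd(\varphi_{n-i+1})$ across all $i$ from $1$ to $n$. Since the self-pairing identifies position $i$ with position $n-i$ and there are $n$ spots with the middle one at $i=m$, the net sign one reads off is $(-1)^m$, matching the claim. The third step, then, is to carry out this bookkeeping carefully: write $r_M$ on a generator $\dm\otimes$ (dual data) of $M^*$, push it through $\Ext^n(M)$ represented by $(\dd A_\bullet(M))^\vee$, and separately compute $r_N \circ (-1)^m\tau^*$ and $\Ext^n(\tau)\circ\Ext^n(u_N)\circ t_{N^*}$ on the same element, using that $\tau^* = \Hom_k(\tau,k)$ is contravariant and that $u_N$ is the evaluation iso, so that $\Ext^n(u_N)$ undoes the double-dualization built into $t_{N^*}$.

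The main obstacle I expect is purely organizational rather than conceptual: there are four functors ($(-)^*$, $\Ext^n$, $\Ext^n\Ext^n$, $((-)^*)^*$) and four natural transformations ($r$, $s$, $u$, $t$) in play, and the commuting square must be resolved into the earlier diagrams (Theorem and Definition \ref{r-iso}, Lemma and Definition \ref{s-iso}, Corollary \ref{t-iso}) plus the Koszul selfduality Lemma \ref{kosI}. The genuine content — that the sign is $(-1)^m$ and not $\pm1$ or $(-1)^{\binom{n}{2}}$ — comes entirely from the complementary-wedge swap applied at the middle term of the resolution of an odd-length complex; everything else is a formal chase. So the proof should be: (i) replace $t_{N^*}$ and $\Ext^n(\tau)\circ\Ext^n(u_N)$ by their definitions via $r$, $s$, $u$; (ii) use naturality of $r$ (Theorem and Definition \ref{r-iso}) to slide $\tau^*$ past $r$, reducing to the case $M=N^*$, $\tau=\id$; (iii) in that case the square becomes a compatibility between $r_{N^*}$, $s_{N^*}$ and the double-dual identifications, which unwinds to the Koszul selfduality signs of Lemma \ref{kosI}; (iv) count the signs and observe the total is $(-1)^m$. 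Since the authors elsewhere dispatch such verifications as "immediate by diagram chasing," I would similarly keep the exposition at the level of identifying which square reduces to which, and only display the sign computation at the middle term.
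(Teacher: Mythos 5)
Your plan is in substance the paper's proof: unwind $r$, $s$, $u$, $t$ through the Nielsen II/IIa resolutions (\ref{reslemma}, \ref{resIIa}), intertwine the resulting chain maps by the canonical isomorphisms of Lemma \ref{caniso}, and extract the sign from the Koszul selfduality of Lemma \ref{kosI}, with $n$ odd making every complementary-swap sign $(-1)^{i(n-i)}$ equal to $1$. The one genuine difference is your step (ii): by naturality of $r$ (\ref{r-iso}) the square reduces to the single identity $(-1)^m r_N=\Ext^n(u_N)\circ\Ext^n(r_{N^*})\circ s_{N^*}$, i.e.\ to the case $\tau=\id$. That reduction is valid and trims the bookkeeping; the paper does not make it, but instead lifts $u_N\circ\tau$ and $(-1)^m\tau^*$ simultaneously to explicit chain maps $\tilde\tau_\bullet$ and $\widetilde{\tau_\bullet^*}$ and compares them through the $\alpha$'s. (Small slip: $\tau=\id$ forces $M=N$, not $M=N^*$; the reduced identity involves $r_N$, $r_{N^*}$, $s_{N^*}$, $u_N$ on a single object.)

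Two points still have to be done correctly when you execute steps (iii)--(iv). First, the generator-level check that the isomorphisms of Lemma \ref{caniso} really intertwine the lifted chain maps is the computational heart of the argument --- it is exactly where ``$n$ odd'' enters, via $w\wedge\tilde w=(-1)^{i(n-i)}\,\tilde w\wedge w$ --- and should be displayed, not dismissed as formal. Second, your stated mechanism for the sign (``the self-pairing identifies position $i$ with $n-i$ and there are $n$ spots with the middle one at $i=m$'') is not where $(-1)^m$ comes from; there are $n+1$ homological positions and, for $n$ odd, no middle spot at all. In Lemma \ref{kosI} the comparison of $K(x)$ with $K(x)^{\vee}$ carries the sign $(-1)^{\lfloor j/2\rfloor}$ in homological position $j$ (the pattern $+,+,-,-,\dots$); normalized to $+1$ at one augmentation end, it is $(-1)^{\lfloor n/2\rfloor}=(-1)^m$ at the other end of the length-$n$ complex. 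It is this accumulated end-to-end discrepancy, transported through the $\alpha$'s to the two augmentations (the ones computing $r$ and $t$), that produces the $(-1)^m$ in the statement --- a whole-resolution accumulation, not a reading at the middle term (that middle-term sign is the mechanism of Theorem \ref{maintheorem}, a different statement). With that correction your argument closes along the paper's lines.
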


\begin{remark}
Besides the technical details the central point of this theorem is the following: For the definition of $r_M$ we resolve $M$ via the Koszul complex, and $M^*$ via the dual Koszul complex. Moreover $t_{N^*}$ --- at least in the case $N=M^*$ --- is roughly $\Ext^n(\quad)$ of $r_M$. That is why we have to resolve this time $N^*$ via the Koszul complex and $N^{**}$ via its dual. The nature of the Koszul complex finally gives the sign. 
\end{remark}

Let us continue with the detailed proof. 

\begin{proof}[Proof of Theorem \ref{functortheorem}]
Recall that $\dd A_i(M)= \dd (R \otimes \bigwedge^i W \otimes M)$ and $\dd B_i(M) = \dd ((R \otimes \bigwedge^i W)^{\vee} \otimes M)$. 
$r_M$ and $t_{N^*}$ are defined in \ref{r-iso} and \ref{t-iso} using certain resolutions. We resolve now $\tau^*$ and $\Ext^n(\tau)$ via these resolutions. The resolutions use the complex constructions Nielsen II (\ref{reslemma}) and Nielsen IIa (\ref{resIIa}).

At first we resolve $\tau$. Let $\tilde \tau_i : \dd(R \otimes \bigwedge^i W \otimes M) \rightarrow \dd((R \otimes \bigwedge^{n-i} W)^{\vee} \otimes N)$ be given by
\[
r \otimes w \otimes \mm \mapsto \left \{ \begin{array}{c} R\otimes \bigwedge^{n-i} W \rightarrow R \otimes \bigwedge^n W \\ (r' \otimes w') \mapsto (r r') \otimes w'\wedge w \end{array} \right \} \otimes \tau(\mm), 
\]
for all $\mm \in M$, $w \in \bigwedge^i W$, $w' \in \bigwedge^{n-i} W$ and $r ,r' \in R$ .  Moreover ${u_{N}}_i: P_i^{\vee}\otimes N \rightarrow P_i^{\vee}\otimes N^{**}$ is defined by $\id \otimes u_N.$ 
As an abbreviation write $v:= u_N \circ \tau$ and $v_i := {u_{N}}_i \circ \tilde \tau_{n-i}$. Let $\ell(i) =\lfloor \frac{i-1}{2} \rfloor$, then we obtain the following commutative diagram. 
\small
\[
\begin{array}{ccc}
0  \leftarrow M \leftarrow \dd A_0(M) & \ldots \leftarrow  \dd A_m(M)  \quad \stackrel{\dd\phi_{m+1}}{\leftarrow} \quad \dd A_{m+1}(M)   \ldots & \leftarrow  \dd A_n(M)  \leftarrow 0 \\
  \downarrow^{v} \qquad   \downarrow^{v_n}& \qquad\qquad \quad \downarrow^{(-1)^{\ell(m)} v_{m+1}} \quad  \qquad\qquad \downarrow^{(-1)^{(m+\ell(m))} v_m}   & \quad \downarrow^{(-1)^m v_0}\\
0  \leftarrow N^{**} \leftarrow \dd B_n(N^{**})& \ldots  \leftarrow \dd B_{m+1}(N^{**})  \stackrel{ \dd\varphi_{m+1}}{\longleftarrow}  \dd B_m(N^{**})  \ldots & \leftarrow   \dd B_0(N^{**}) \leftarrow 0, \\
\end{array}
\]  
\normalsize
where The diagram commutes by Lemma \ref{kosI}, because the $\tilde \tau_i$ behave exactely as the map between the Koszul complex and its dual.
\\
Applying $(\quad)^{\vee} = \Hom_R(\quad,R\otimes  \bigwedge^n W)$ to the diagram we obtain
\small
\[
\begin{array}{l}
0  \leftarrow \Ext^n(M) \leftarrow (\dd A_n(M))^{\vee} \ldots   (\dd A_{m+1}(M))^{\vee} \quad \stackrel{\dd\phi_{m+1}^{\vee}}{\leftarrow}  (\dd A_m(M))^{\vee} \qquad  \ldots(\dd A_0(M))^{\vee} \leftarrow 0\\
 \qquad\quad \uparrow^{ \Ext^n(v) }  \qquad   \uparrow^{{(-1)^m (v_0)^{\vee}}} \qquad \quad   \uparrow^{(-1)^{(m+\ell(m))} (v_{m+1})^{\vee}}  \qquad  \uparrow^{(-1)^{\ell(m)} (v_{m+1})^{\vee}}  \qquad  \quad\uparrow^{  (v_n)^{\vee}}  \\
0  \leftarrow \Ext^n(N^{**}) \leftarrow (\dd B_0(N^{**}) )^{\vee} \ldots   (\dd B_m(N^{**}))^{\vee} \stackrel{ \dd\varphi_{m+1}^{\vee}}{\leftarrow}  (\dd(B_{m+1}(N^{**}))^{\vee}  \ldots (\dd B_n(N^{**}))^{\vee} \leftarrow 0. \\
\end{array}
\]
\normalsize
\\
In the same manner as above we resolve $(-1)^m \tau^*$ (also using Nielsen II and Nielsen IIa):

\small
\[
\begin{array}{ccc}
0  \leftarrow M^*  \quad \leftarrow \dd B_n(M^*)  & \ldots \leftarrow \dd B_{m+1}(M^*) \stackrel{ \dd (\varphi_{m+1})}{\leftarrow} \dd B_{m}(M^*)   \ldots&\leftarrow   \dd B_0(M^*) \leftarrow 0 \\
\qquad   \uparrow^{(-1)^m \tau^*}   \qquad  \uparrow^{(-1)^m\widetilde {\tau_0^*}} &  \qquad \qquad \uparrow^{(-1)^{m+\ell(m)}\widetilde{\tau_m^*}}  \qquad \qquad \uparrow^{(-1)^{\ell(m)}\widetilde{\tau_{m+1}^*}} &\qquad  \uparrow^{\widetilde{ \tau_n^*}}\\
0  \leftarrow N^* \quad \leftarrow \dd A_0(N^*)  &\ldots \leftarrow  \dd A_m(N^*)   \stackrel{\dd (\phi_{m+1})}{\leftarrow}  \dd A_{m+1}(N^*)  \ldots & \leftarrow  \dd A_n(N^*) \leftarrow 0, \\
\end{array}
\]
\normalsize
where $\widetilde{ \tau_i^*}: \dd A_i(N^*) \rightarrow \dd B_{n-i}(M^*)$ is defined by  
\[ (r \otimes w \otimes \dn) \mapsto \left \{ \begin{array}{c} R\otimes \bigwedge^{n-i} W \rightarrow R \otimes \bigwedge^n W \\ r' \otimes w' \mapsto (r r') \otimes  w' \wedge w \end{array}  \right \} \otimes \tau^*(\dn) \] for all $\dn \in N^*$, $w \in \bigwedge^i W$, $w' \in \bigwedge^{n-i} W$ and $r ,r' \in R$ . 

\medskip

Consider the canonical isomorphism from \ref{caniso}: $\alpha_{M,i} : \dd B_i(M^*) \stackrel{\cong}{\rightarrow} (\dd A_i(M))^{\vee}$. Hence $(\alpha_{N^*,n-i})^{\vee}: \dd A_{n-i}(N^*)\stackrel{\cong}{\rightarrow} (\dd A_{n-i}(N^{*}))^{\vee})^{\vee} \stackrel{\cong}{\rightarrow} (\dd B_{n-i}(N^{**}))^{\vee}$. We  connect the two diagrams using them. Let us show the commutativity of the following diagram for all $n \geq i \geq 0$:

\vspace*{0.8cm}

\xymatrix{&&&&& \dd B_i(M^*)    \ar[rr]^{\alpha_{M,i}  \,\,\,\,}& & (\dd A_i(M))^{\vee} \\
          &&&&& \dd A_{n-i}(N^*)  \ar[u]^{\widetilde{\tau_{n-i}^*} \,\,}  \ar[rr]^{(\alpha_{N^*,n-i})^{\vee}}& &\quad (\dd B_{n-i}(N^{**}))^{\vee}\ar[u]_{(\tilde \tau_{i})^{\vee} \circ (u_{N,n-i})^{\vee}}. }

\medskip

Let $r \in R, \dn \in N^*, \mn^{**} \in N^{**}, w \in \bigwedge^{n-i} W, \tilde w \in \bigwedge^i W$ and $\mm \in M$ be arbitrary. Moreover let $\dmp \in P_{n-i}^{\vee}$, such that $1\otimes w \mapsto 1 \otimes w \wedge w'$ for some fixed $w' \in \bigwedge^i W$. 
\begin{eqnarray*}
\dd A_{n-i}(N^*) \ni 1\otimes w \otimes \dn \mapsto  \left \{\begin{array}{c} B_{n-i}(N^{**}) \rightarrow R\otimes \bigwedge^n W \\  \dmp \otimes  \mn^{**}  \mapsto  \dmp(w) \cdot \dn( \mn) \end{array} \right \} \stackrel{\tilde \tau_i^{\vee}\circ u^{\vee}_{N,n-i}}{\longmapsto} \\
\left \{ \begin{array}{c} A_i(M) \rightarrow R \otimes \bigwedge^n W \\ 
(1 \otimes \tilde w \otimes  \mm ) \mapsto (w \wedge \tilde w) \cdot \dn(\tau( \mm)) =
 (-1)^{i(n-i)} (\tilde w \wedge w) \cdot \dn(\tau( \mm)) \end{array} \right \},
\end{eqnarray*}
and first applying $\widetilde{\tau_{n-i}^*}$ we have
\begin{eqnarray*}
 1\otimes w \otimes \dn \mapsto (1 \otimes (\_\wedge w )\otimes \dn\circ\tau) \mapsto 
\left \{ \begin{array}{c} A_i(M) \rightarrow R\otimes \bigwedge^n W \\ (1\otimes \tilde w \otimes \mm ) \mapsto (\tilde w\wedge  w) \cdot \dn(\tau( \mm)) \end{array} \right \} ,\\
\end{eqnarray*}
which equals to the above expression as $n$ is odd. Note again that in the last row $1\otimes (\_\wedge w)$ stands for the functional $ R \otimes \bigwedge^i W \rightarrow R \otimes \bigwedge^n W, \quad r \otimes \tilde w \mapsto r \otimes(\tilde w \wedge w).$

\medskip

That means the right part of the next diagram commutes. On the other hand we derive of the equivalences of functors the diagram of the theorem on the left. For clarity we use a reduced notation:

\vspace{0.5 cm}

\begin{xy}
\xymatrix@!0{
&&&&&0 \leftarrow  \qquad \qquad & \Ext^n(M)  \quad\ar@{<-}'[d][dd]_{\Ext^n(\tau)} && \quad \leftarrow (A_n(M))^{\vee} \ar@{<-}'[d][dd] &\quad \quad\quad \quad \quad \quad  \leftarrow \ldots & &&&&\leftarrow (A_0(M))^{\vee}  \ar@{<-}'[d][dd] \leftarrow 0 \\
&&&&0 \leftarrow \qquad&M^* \ar@<1.2ex>[ur]_{r_M}  \ar@{<-}[dd]_{(-1)^m \tau^*} &&\longleftarrow B_n(M^*)  \ar@<1.2ex>[ur]_{\alpha_{M,n}} \ar@{<-}[dd]&\quad\quad \quad \quad \quad\quad\leftarrow \ldots&&&&&\longleftarrow B_0(M^*)  \ar@<1.2ex>[ur]_{\alpha_{M,0}} \ar@{<-}[dd]\leftarrow 0& \\
&&&&&& \Ext^n(N^{**}) && \quad \quad \leftarrow (B_0(N^{**}) )^{\vee}&\quad \quad \quad \quad\quad \quad \quad \leftarrow \ldots & \qquad \qquad \qquad \qquad \qquad&&&&\leftarrow (B_n(N^{**}) )^{\vee}\leftarrow 0&\\
&&&&0 \leftarrow \qquad &N^* \ar@{<-}[rr] \ar@<1.2ex>[ur]_{ t_{N^*}}&& A_0(N^*)  \ar@<1.2ex>[ur]_{(\alpha_{N^*,0})^{\vee}}&\qquad \qquad \leftarrow \ldots&&&&&\leftarrow A_n(N^*)  \ar@<1.2ex>[ur]_{(\alpha_{N^*,n})^{\vee}}\leftarrow 0&
}
\end{xy}

\end{proof}

Now we are able to prove the central theorem of the section, Theorem \ref{backwardstheorem}. It basically says that our definition of Gorensteiness is actually equivalent to having a selfdual resolution.

\begin{proof}[Proof of Theorem \ref{backwardstheorem}]
Let again $(\quad)^{\vee}=\Hom_R(\quad,R(-d))$. Using the given selfdual resolution, we define an isomorphism $\tau' : M \rightarrow \Ext_R^n(M,R(-d))(-s)$ as follows: We consider the dual of the resolution and obtain an obvious map of complexes as seen in the diagram. By abuse of notation, we denote by $\id: F_i \stackrel{\cong}{\mapsto} F_i^{\vee \vee}$ the canonical isomorphism, too.
\small
\[
\begin{array}{cccc}
 0 \leftarrow M &\leftarrow F_0 \stackrel{\psi_1}{\leftarrow}\quad \ldots \leftarrow F_{m}&\stackrel{\psi_{m+1}}{\leftarrow}  F^{\vee}_{m}(-s) &\leftarrow \ldots \stackrel{\psi_1^{\vee}(-s)}{\leftarrow} F_0^{\vee}(-s)\leftarrow 0\\
 \qquad \qquad  \downarrow^{\tau'} \quad &  \quad \downarrow^{id} \quad \qquad \qquad \downarrow^{id} &\quad \downarrow^{\pm id} & \qquad \quad \downarrow^{\pm id}   \\
 0 \leftarrow \Ext^n_R(M,R(-d-s)) &\leftarrow {F_0^{\vee}}^{\vee}  \leftarrow \ldots \leftarrow {F_m^{\vee}}^{\vee}   &\stackrel{\pm\psi_{m+1}}{\leftarrow}  F^{\vee}_{m}(-s) &\leftarrow \ldots \stackrel{\psi_1^{\vee}(-s)}{\leftarrow} F_0^{\vee}(-s)\leftarrow 0.
\end{array}
\] 
\normalsize
\\
Applying $\Hom_R(\quad,R(-d))$ to the diagram, we obtain (here $\Ext^n_R(\quad)$ denotes $\Ext^n_R(\quad,R(-d))$:
\small
\[
\begin{array}{ccc}
 0 \leftarrow \Ext^n_R(M) \qquad \leftarrow (F_0^{\vee}(-s))^{\vee} \quad \ldots    (F_m^{\vee}(-s))^{\vee}\quad \stackrel{\pm\psi_{m+1}}{\leftarrow}  F^{\vee}_{m} &\ldots  \leftarrow F_0^{\vee} \leftarrow 0\\
 \quad \qquad  \uparrow^{\Ext^n_R(\tau')} \qquad  \quad \quad \uparrow^{\pm id} \qquad\qquad \qquad\uparrow^{\pm id} \quad \qquad \qquad \uparrow^{ id} & \qquad  \uparrow^{ id}\quad\\
 0 \leftarrow \Ext^n_R(\Ext^n_R(M)(-s)) \quad \leftarrow (F_0^{\vee}(-s))^{\vee}   \ldots (F_m^{\vee}(-s))^{\vee} \quad \stackrel{\psi_{m+1}}{\leftarrow}  ((F_m^{\vee})^{\vee})^{\vee}  &\ldots  \leftarrow((F_0^{\vee})^{\vee})^{\vee} \leftarrow 0.
\end{array}
\] 
\normalsize
We want to make use of the machinery developed within this section. Especially our aim is to apply Theorem \ref{functortheorem}. Therefore in what follows we identify $R \otimes \bigwedge^n W \cong R(-d)$, via $r \otimes \chi_1 \wedge \ldots \wedge \chi_n \mapsto r.$
\\
From the diagrams we obtain (compare Definition \ref{s-iso})
\[
\Ext_R^n(\tau')(-s) = \pm \tau' \circ s_M^{-1}. \mbox{   (*)}  
\]
Finally we use the isomorphism $r_M: M^* \stackrel{\cong}{\rightarrow} \Ext^n_R(M,R(-d))$ (see \ref{r-iso}) to define $\tau : M \rightarrow M^*(-s)$ as \[\tau := r_M^{-1}(-s)  \circ \tau'. \mbox{   (**)} \] 

We apply Theorem \ref{functortheorem} to the situation $N = M^*(-s)$.  Hence we obtain (note that by definition $t_N = \Ext^n(r_N) \circ s_N$ (see \ref{t-iso})): 
\begin{eqnarray*}
\tau^* = (-1)^{m} r_M^{-1} \circ \Ext^n(\tau) \circ \Ext^n(u_{M^*})(s)  \circ t_{M^{**}}(s) \stackrel{(**)}{=} \\
(-1)^{m} r_M^{-1} \circ \Ext^n(\tau') \circ \Ext^n(r_M)^{-1}(s)\circ \Ext^n(u_{M^*})(s)   \circ \Ext^n(r_{M^{**}})(s) \circ s_{M^{**}}(s)\stackrel{(*)}{=}\\
\pm (-1)^{m} r_M^{-1} \circ \tau'(s) \circ s_M^{-1} (s) \circ \Ext^n(r_M)^{-1}(s) \circ \Ext^n(u_{M^*})(s)  \circ \Ext^n(r_{M^{**}})(s) \circ s_{M^{**}}(s) \stackrel{(1)}{=}\\
\pm (-1)^{m} r_M^{-1} \circ \tau'(s)\circ u_M^{-1}(s) \stackrel{(**)}{=}\\
\pm (-1)^{m} \tau(s) \circ u_M^{-1}(s).
\end{eqnarray*}

For $(1)$ it remains to show the commutativity of the following diagram:

\medskip

\small
\[
\begin{array}{ccccc}
  M^{**} &\stackrel{s_{M^{**}}}{\longrightarrow} & \Ext^n(\Ext^n(M^{**}))&\stackrel{\Ext^n(r_{M^{**}})}{\longrightarrow}  & \Ext^n(M^{***})\\
 \downarrow^{u_{M}^{-1}} & (2) &\quad \downarrow^{\Ext^n(\Ext^n(u_M)^{-1})} & (3) & \downarrow^{\Ext^n(u_{M^*})} \\
 M & \stackrel{s_{M}^{-1}}{\longleftarrow} & \Ext^n(\Ext^n(M)) & \stackrel{\Ext^n(r_M)^{-1}}{\longleftarrow} & \Ext^n(M^*). 
 \end{array}
 \] 
\normalsize
Diagram $(2)$ commutes as $\{ M \mapsto s_M \,|\, M \in \Obj(\grMFL)\}$ is an isomorphism of functors (see \ref{s-iso}): $\id \rightarrow \Ext^n(\Ext^n(\quad))$. 
\\  
Diagram$(3)$ commutes as 

\medskip

\[
\xymatrix{& M^{***}  \ar[r]^{\,\,\quad u_{M^*}^{-1}  } \ar[d]^{r_{M^{**}}} & M^*\ar[d]^{r_M}\\
          & \Ext^n(M^{**}) \ar[r]^{\, \Ext^n(u_M) }& \Ext^n(M) }  \mbox{    (3a)}
\]
commutes by using the isomorphism of functors property of $\{ M \mapsto r_M \,|\, M \in \Obj(\grMFL)\}$, $(\quad)^{*} \rightarrow \Ext^n(\quad)$. Apply $\Ext^n(\quad)$ to $(3a)$ and gain
\[
\xymatrix{& \Ext^n( M^{***} )&& \ar[ll]_{\,\,\quad \Ext^n( u_{M^*})^{-1}  } \Ext^n( M^*)\\
          & \Ext^n(\Ext^n(M^{**})) \ar[u]^{\Ext^n(r_{M^{**}})} && \ar[ll]_{\, \Ext^n(\Ext^n(u_M)) }\Ext^n(\Ext^n(M)) \ar[u]^{\Ext^n(r_M)} .}
\]
Inverting all maps on the right hand side gives $(3)$. Hence we have finished the proof. 
\end{proof}

\begin{remark}
In the proof --- omitting for a second the technical details --- we see the reason why we needed $t_{M^*}$ to be mainly $\Ext^n(r_{M^*})$: It vanishes together with $\Ext^n(r_M)^{-1}$ form the definition of $\Ext^n(\tau)$, and we can compute $\tau^*$ in terms of $\tau$. 
\end{remark}

\end{document}